\RequirePackage[l2tabu,orthodox]{nag}
\documentclass[reqno,12pt,oneside]{amsart}
%\pdfoutput=1
% This is my standard preamble for LaTeX math documents
% - Daniel Smertnig, 2015
% The following makes sure that arxiv does not try to compile this file:
%auto-ignore
\usepackage%[left=2.5cm, right=2.5cm
           %,top=4.3cm, bottom=4.3cm
           %,a4paper]
           {geometry}
\usepackage{ifxetex,ifluatex}
\newif\ifxetexorluatex
\ifxetex
  \xetexorluatextrue
\else
  \ifluatex
    \xetexorluatextrue
  \else
    \xetexorluatexfalse
  \fi
\fi

%%%%%%%%%%%%%%%%%%%%%%%%%%%%%%%%%%%%%%%%%%%%%%%%%%%%%%%%%%%%%%%%%%%%%%%%%%%
% Load different input/font packages for pdfTeX and XeTeX/LuaTeX
% See http://tex.stackexchange.com/questions/2984 for alternative packages
%
\ifxetexorluatex
  \usepackage{fontspec}           % for loading of OpenType fonts
  \usepackage{unicode-math}       % instead of amsfonts, amssymb use new
                                  % unicode fonts
  \usepackage{polyglossia}        % replaces babel
  \setmainlanguage{english}
\else
  \usepackage[utf8]{inputenc}     % UTF-8 input
  \usepackage[english]{babel}     % hyphenation etc.

  \usepackage{amsfonts}
  \usepackage{amssymb}
  
  \usepackage{lmodern}
  \usepackage[T1]{fontenc}
\fi
%
%%%%%%%%%%%%%%%%%%%%%%%%%%%%%%%%%%%%%%%%%%%%%%%%%%%%%%%%%%%%%%%%%%%%%%%%%%%

\usepackage{booktabs}   % Better looking tables
\usepackage[inline]{enumitem} % Easier customization of lists (enumerate,itemize)
\usepackage{xspace}     % Space command useful for writing macros
\usepackage[svgnames]{xcolor}     % Color definitions
\usepackage{amsmath}    % Math packages
\usepackage{mathtools}  % Some useful stuff, like DeclarePairedDelimiter
\usepackage{stmaryrd} % for \mapsfrom

\usepackage{graphics}
\usepackage{etoolbox}
\usepackage{blkarray}
\usepackage{tikz-cd}
\usepackage[font=small,labelfont=bf]{caption}

\definecolor{linkblue}{RGB}{1,1,190}
\definecolor{citegreen}{RGB}{1,190,1}
\usepackage[linkcolor=linkblue
           ,citecolor=citegreen
           ,ocgcolorlinks        % This ensures that colored links are
                                 % printed black/white, while appearing
                                 % colored on screen.
                                 % WARNING: Does not work with line breaks
                                 % in links.
           ,bookmarksopen=True,  % Automatically expand bookmarks in PDF
           ]{hyperref}
\usepackage[ocgcolorlinks]{ocgx2} % This fixes line breaks in OCG links

\pdfstringdefDisableCommands{%
}

% Set PDF title and author based on document data
\makeatletter
  \AtBeginDocument{
    \hypersetup{
      pdftitle  = {\@title},
      pdfauthor = {\authors}     % This assumes the AMS classes, where the
                                 % \author command adds to an \authors
                                 % command
    }
  }
\makeatother

% To avoid warnings about duplicate link targets, amsthm needs to be loaded
% *between* hyperref and cleveref
% (http://tex.stackexchange.com/questions/25047)
\usepackage{amsthm}

%
%%%%%%%%%%%%%%%%%%%%%%%%%%%%%%%%%%%%%%%%%%%%%%%%%%%%%%%%%%%%%%%%%%%%%%%%%%%

% Better way to produce hyperlinked references
\usepackage[capitalize    % Always capitalize references
          %,nameinlink    % Display names in \cref and \Cref references
          %,noabbrev      % Do not abbreviate automatic reference texts % NOT SUPPORTED BY ARXIV 2015-05-04
           ]{cleveref}

%%%%%%%%%%%%%%%%%%%%%%%%%%%%%%%%%%%%%%%%%%%%%%%%%%%%%%%%%%%%%%%%%%%%%%%%%%%
% Define various theorem environments
%
\theoremstyle{definition}
\newtheorem {definition}{Definition}[section]

\theoremstyle{plain}
\newtheorem {theorem}[definition]{Theorem}
\crefname   {theorem}{Theorem}{Theorems}
\newtheorem*{theorem*}{Theorem}
\newtheorem {lemma}[definition]{Lemma}
\newtheorem {proposition}[definition]{Proposition}

\newtheorem {corollary}[definition]{Corollary}

\theoremstyle{remark}
\newtheorem {remark}[definition]{Remark}
\newtheorem {example}[definition]{Example}
\newtheorem*{example*}{Example}
\crefname   {example}{Example}{Examples}
\crefname   {figure}{Figure}{Figures}

\AtBeginEnvironment{smallremark}{\footnotesize}

% arXiv does not support noabbrev option for cleveref at the moment
% 2018-03-12: should work now, arxiv has updated its texlive
%\crefname{equation}{Equation}{Equations}
%\Crefname{equation}{Equation}{Equations}
%\crefname{figure}  {Figure}{Figures}
%\Crefname{figure}  {Figure}{Figures}
%
%%%%%%%%%%%%%%%%%%%%%%%%%%%%%%%%%%%%%%%%%%%%%%%%%%%%%%%%%%%%%%%%%%%%%%%%%%%

%%%%%%%%%%%%%%%%%%%%%%%%%%%%%%%%%%%%%%%%%%%%%%%%%%%%%%%%%%%%%%%%%%%%%%%%%%%
% Define shortcuts for various math alphabets
% \fa -> \mathfrak a, \bA -> \mathbb A, \sa -> \mathsf a, \cA -> \mathcal A
%
\makeatletter

% Usage: \sc@lettershortcut{\mathbb}{b}{A} will create a command
% \bA which expands to \mathbb{A} if a command with the same name
% does not already exist
\newcommand{\sc@lettershortcut}[3]{%
  \expandafter\providecommand\csname #2#3\endcsname{#1{#3}}%
}

\newcommand{\sc@shortcuts}[3]{%
  \count@=0
  \loop
  \advance\count@ 1
  \edef\tmp@{%
    \noexpand\sc@lettershortcut\unexpanded{{#1}}{#2}{#3\count@}
  }
  \tmp@
  \ifnum\count@<26
  \repeat
}

% Usage: \defshortcuts{\mathbb}{b} will create commands
% \ba,...,\bz that expand to \mathbb{a}, ..., \mathbb{z}
\newcommand{\defshortcuts}[2]{\sc@shortcuts{#1}{#2}{\@alph}}

% Usage: \defShortcuts{\mathbb}{b} will create commands
% \bA,...,\bZ that expand to \mathbb{A}, ..., \mathbb{Z}
\newcommand{\defShortcuts}[2]{\sc@shortcuts{#1}{#2}{\@Alph}}

\makeatother
%
%%%%%%%%%%%%%%%%%%%%%%%%%%%%%%%%%%%%%%%%%%%%%%%%%%%%%%%%%%%%%%%%%%%%%%%%%%%

% Allow \sc to be overwritten (otherwise it would be smallcaps)

\defShortcuts{\mathbb}{b}
\defShortcuts{\mathcal}{c}
\defShortcuts{\mathfrak}{f}
\defShortcuts{\mathsf}{s}
\defshortcuts{\mathfrak}{f}
\defshortcuts{\mathsf}{s}

%%%%%%%%%%%%%%%%%%%%%%%%%%%%%%%%%%%%%%%%%%%%%%%%%%%%%%%%%%%%%%%%%%%%%%%%%%%
% Command for rigid factorization
%
\def\rfop{*}
\makeatletter
\newcommand\rigidfactorization[2][]{%
  \def\rf@delim{\rfop}
  \newif\ifrf@notfirst
  #1
  \@for\next:=#2\do{%
    \ifrf@notfirst
      \rf@delim
    \fi
    \rf@notfirsttrue
    \next
  }%
}
\makeatother
\newcommand\rf\rigidfactorization
%
%%%%%%%%%%%%%%%%%%%%%%%%%%%%%%%%%%%%%%%%%%%%%%%%%%%%%%%%%%%%%%%%%%%%%%%%%%%

%%%%%%%%%%%%%%%%%%%%%%%%%%%%%%%%%%%%%%%%%%%%%%%%%%%%%%%%%%%%%%%%%%%%%%%%%%%
% Various math commands and operators
%
                     % Quotient field/group

\ifxetexorluatex
  
\else
  
\fi
                     % Longer \bar
               % Vectors in boldface
                 % valuation
% disjoint union

 % isomorphism arrow

          % colon ideal
        % left colon ideal
        % right colon ideal

\DeclarePairedDelimiter{\length}{\lvert}{\rvert}
\DeclarePairedDelimiter{\abs}{\lvert}{\rvert}
\DeclarePairedDelimiter{\card}{\lvert}{\rvert}

\DeclareMathOperator{\End}{End}

\DeclareMathOperator{\GL}{GL}

\DeclareMathOperator{\Hom}{Hom}

\DeclareMathOperator{\chr}{char}

\DeclareMathOperator{\supp}{supp}

 % associated graded ring

\newcommand{\defit}{\emph}

% use \subseteq, \subsetneq instead

%
%%%%%%%%%%%%%%%%%%%%%%%%%%%%%%%%%%%%%%%%%%%%%%%%%%%%%%%%%%%%%%%%%%%%%%%%%%%

%%%%%%%%%%%%%%%%%%%%%%%%%%%%%%%%%%%%%%%%%%%%%%%%%%%%%%%%%%%%%%%%%%%%%%%%%%%
% Lists and references to them

% Highest level enum of the form (1), (2), ...
\setlist[enumerate,1]{label=\textup{(\arabic*)}, ref=\textup{(}\arabic*\textup{)}, leftmargin=0.75cm}
% Second level enum of the form (i), (ii), ...
\setlist[enumerate,2]{label=\textup{(}\roman*\textup{)}, ref=\textup{(}\roman*\textup{)}}

% Special enumeration for equivalences
\newlist{equivenumerate}{enumerate}{1}
\setlist[equivenumerate,1]{%
  label=\textup{(\alph*)},
  ref=\textup{(}\alph*\textup{)},
  leftmargin=0.75cm
}

\newlist{equivenumerate*}{enumerate*}{1}
\setlist*[equivenumerate*,1]{%
  label=\textup{(\alph*)},
  ref=\textup{(}\alph*\textup{)},
  leftmargin=0.75cm
}

% Special enumeration for list of properties
% (e.g. `An A is a B if it satisfies all of the following properties:`)
\newlist{propenumerate}{enumerate}{1}
\setlist[propenumerate,1]{%
  label=\textup{(\roman*)},
  ref=\textup{(}\roman*\textup{)},
  leftmargin=0.75cm
}

% Environment to format claims (e.g. in a proof)
%
{\end{description}}

% Easier referencing of enumerations in lemmas/
% theorems, as long as the labels are structured
% correctly.
% From: http://tex.stackexchange.com/questions/125558/

%\newcommand{\subref}[1]{\dosubref\cref#1\relax}
%\newcommand{\Subref}[1]{\dosubref\Cref#1\relax}
%\def\dosubref#1#2:#3\relax{#1{#2}(\ref{#2:#3})}

\makeatletter
\def\sr@stripleadingcol::#1{#1}
\def\sr@dosubref#1#2:#3 #4{\if\relax#3\relax%
  \def\first{\sr@stripleadingcol #4}%
  #1{\first}\ref{\first:#2}%
\else%
  \sr@dosubref#1#3 {#4:#2}%
\fi}%

\newcommand{\subref}[1]{\sr@dosubref\cref#1: :\relax}
\newcommand{\Subref}[1]{\sr@dosubref\Cref#1: :\relax}
\makeatother
%
%%%%%%%%%%%%%%%%%%%%%%%%%%%%%%%%%%%%%%%%%%%%%%%%%%%%%%%%%%%%%%%%%%%%%%%%%%%%

% \setminus does not exist in latin modern
\AtBeginDocument{\renewcommand{\setminus}{\smallsetminus}}

\usetikzlibrary{positioning,automata}
\tikzset{>=latex[round]}

%%%%%%%%%%%%%%%%%%%%%%%%%%%%%%%%%%%%%%%%%%
% For proof reading
%\geometry{left=1.5cm, right=1.5cm
%         ,top=2cm, bottom=1.5cm
%         ,a4paper}
%\usepackage{setspace}
%\doublespacing
%%%%%%%%%%%%%%%%%%%%%%%%%%%%%%%%%%%%%%%%%%

\newcommand{\llangle}{\langle\!\langle}
\newcommand{\rrangle}{\rangle\!\rangle}

\newcommand{\defi}{\textsf}

\usepackage{array}
\usepackage{arydshln}
\setlength{\dashlinedash}{1pt}
\setlength{\dashlinegap}{2pt}

\DeclareMathOperator{\PGL}{PGL}

\usepackage{microtype}
%\selectcolormodel{gray}

\title{Noncommutative rational Pólya series}
\author{Jason Bell}
\address{Department of Pure Mathematics, University of Waterloo, Waterloo, ON, Canada N2L 3G1}
\email{jpbell@uwaterloo.ca}

\author{Daniel Smertnig}
\email{dsmertni@uwaterloo.ca}

\newcommand{\irr}{\cZ}
\usepackage{bbold}
\newcommand{\charser}[1]{\mathbb{1}_{{#1}}}

\ifxetexorluatex
  
\else
  
  \fi

\keywords{noncommutative rational series, weighted finite automata, Pólya series, Hadamard sub-invertibility, unambiguous rational series}
\subjclass[2010]{Primary 68Q45, 68Q70; Secondary 11B37}

\begin{document}

\begin{abstract}
  A (noncommutative) Pólya series over a field $K$ is a formal power series whose nonzero coefficients are contained in a finitely generated subgroup of $K^\times$.
  We show that rational Pólya series are unambiguous rational series, proving a 40 year old conjecture of Reutenauer.
  The proof combines methods from noncommutative algebra, automata theory, and number theory (specifically, unit equations).
  As a corollary, a rational series is a Pólya series if and only if it is Hadamard sub-invertible.
  Phrased differently, we show that every weighted finite automaton taking values in a finitely generated subgroup of a field (and zero) is equivalent to an unambiguous weighted finite automaton.
\end{abstract}

\maketitle

\section{Introduction and main results}

Let $K$ be a field.
A univariate formal power series
\[
  S=\sum_{n \ge 0} s(n) x^n \in K\llbracket x \rrbracket
\]
is a \defi{rational series} if it is the power series expansion of a rational function at $0$.
Necessarily, this rational function does not have a pole at $0$.
Equivalently, the coefficients of a rational series satisfy a \defi{linear recurrence relation}, that is, there exist $\alpha_1$, $\ldots\,$,~$\alpha_m \in K$ such that
\[
  s(n+m) = \alpha_1 s(n+m-1) + \cdots + \alpha_m s(n) \quad\text{for all $n \ge 0$}.
\]
Pólya \cite{polya21} considered arithmetical properties of rational series over $K=\bQ$, and characterized the univariate rational series whose coefficients are supported at finitely many prime numbers.
This was later extended to number fields by Benzaghou \cite[Chapitre 5]{benzaghou70}, and to arbitrary fields, in particular fields of positive characteristic, by Bézivin \cite{bezivin87}.
Ultimately, they proved the following theorem.

We call a rational series $S \in K\llbracket x \rrbracket$ a \defi{Pólya series} if there exists a finitely generated subgroup $G \le K^\times$, such that all coefficients of $S$ are contained in $G \cup \{0\}$.

\begin{theorem}[Pólya; Benzaghou; Bézivin] \label{t:univariate}
  Let $K$ be a field, let $S=P/Q \in K(x)$ be a rational function with $Q(0) \ne 0$, and let
  \[
    S = \sum_{n=0}^\infty s(n) x^n \in K\llbracket x\rrbracket
  \]
  be the power series expansion of $S$ at $0$.
  Suppose that $S$ is a Pólya series.
  Then there exist a polynomial $T \in K[x]$, $d \in \bZ_{\ge 0}$, and for each $r \in [0,d-1]$ elements $\alpha_r \in K$, $\beta_r \in K^\times$ such that
  \[
    S = T + \sum_{r=0}^{d-1} \frac{\alpha_r x^r}{1-\beta_r x^d}.
  \]
  Equivalently, there exists a finite set $F \subseteq \bZ_{\ge 0}$ such that
  \[
    s(kd+r) = \alpha_r \beta_r^k \qquad\text{for all } k \ge 0 \text{ and } r \in [0,d-1] \text{ with $kd+r \not\in F$}.
  \]
\end{theorem}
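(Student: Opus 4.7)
My plan is to extend scalars to $\bar K$, reduce the sequence $(s(n))$ to an exponential polynomial via partial fractions, and then exploit the Pólya hypothesis through $S$-unit equations and the Skolem–Mahler–Lech theorem. First, I would perform the partial fraction decomposition of $S \in \bar K(x)$, obtaining
\[
  s(n) = \sum_{i=1}^\ell p_i(n) \lambda_i^n \qquad (n \ge N_0),
\]
with distinct $\lambda_i \in \bar K^\times$ and nonzero $p_i \in \bar K[n]$. After enlarging $G$ to a finitely generated subgroup $G' \le \bar K^\times$ containing the $\lambda_i$ and the nonzero coefficients of the $p_i$, the nonzero values $s(n)$ still lie in $G'$. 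Next I would show that each $p_i$ is constant: if some $\deg p_i \ge 1$, then the identity $s(n) = \sum_{i,k} c_{i,k}\, n^k \lambda_i^n$ together with $s(n) \in G' \cup \{0\}$ and the $S$-unit theorem (Evertse–van der Poorten–Schlickewei in characteristic $0$, and the Derksen–Evertse–Zannier analog in characteristic $p$) rules out polynomial growth factors $n^k$ with $k \ge 1$, once vanishing subsums have been controlled via Skolem–Mahler–Lech.

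With $s(n) = \sum_i \alpha_i \lambda_i^n$ in hand, I would apply the $S$-unit theorem to the linear relation $\sum_i \alpha_i \lambda_i^n - s(n) = 0$, whose nonzero terms all lie in $G'$: outside a suitably structured exceptional set, every $n$ must make some proper subsum vanish. Skolem–Mahler–Lech describes the zero set of each such subsum as a finite union of arithmetic progressions, and intersecting these I would extract a common period $d$ such that on each residue class $r \pmod d$ a fixed subset $I_r \subseteq \{1, \ldots, \ell\}$ of indices is active, while the remainder cancels. The restricted sequence $s(kd+r) = \sum_{i \in I_r} \alpha_i \lambda_i^r\,(\lambda_i^d)^k$ is then itself a Pólya sequence in $k$, and linear independence of distinct geometric progressions over $\bar K$ forces all $\lambda_i^d$ with $i \in I_r$ to coincide with a common value $\beta_r$. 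Setting $\alpha_r := \sum_{i \in I_r} \alpha_i \lambda_i^r$ then gives $s(kd+r) = \alpha_r \beta_r^k$ whenever $kd+r$ avoids a finite exceptional set $F$, which together with the initial segment $n < N_0$ is absorbed into the polynomial $T$.

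The main obstacle is the positive characteristic case: Skolem–Mahler–Lech fails as stated, and one must instead invoke the Derksen–Masser description of zero sets of linear recurrences by $p$-normal sets together with the positive characteristic version of the $S$-unit equation. The delicate point is verifying that in the presence of the Pólya hypothesis the resulting structure collapses to genuine periodicity with a single integer period $d$, as the theorem asserts, rather than to the more exotic $p$-adic exceptional sets that arise in pure Skolem-type statements. A second, more technical point is choosing $d$ uniformly across all residue classes so that both the cancelling subsums and the coincidences $\lambda_i^d = \beta_r$ are achieved simultaneously; this requires taking $d$ to be a common multiple of the periods produced at each stage of the argument.
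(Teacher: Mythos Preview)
Your plan follows a genuinely different route from the paper. The paper deduces \cref{t:univariate} as a corollary of its main noncommutative theorem: by \cref{t:main} a rational P\'olya series is recognized by an unambiguous weighted automaton, and over a one-letter alphabet such an automaton immediately forces the required shape (\cref{p:unambig-ap}). The unit-equation input is applied once, inside the proof of \cref{t:main}, to constrain the \emph{linear hull} of a minimal linear representation (\cref{l:functional}, \cref{l:functional-p}); no partial fractions, exponential polynomials, or Skolem--Mahler--Lech appear anywhere. Your approach instead attacks the exponential-polynomial form of $s(n)$ directly, in the spirit of the original proofs of Benzaghou and B\'ezivin, and when it works it avoids the automaton formalism entirely.

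There is, however, a real gap in your first reduction. You want to show that each $p_i$ is constant by applying the $S$-unit theorem to the relation $\sum_{i,k} c_{i,k}\,n^k\lambda_i^n - s(n) = 0$, but that theorem requires every term to lie in a \emph{fixed} finitely generated subgroup of $\bar K^\times$. The factors $n^k$ do not: as $n$ runs over the positive integers they generate, in characteristic~$0$, a multiplicative group of infinite rank. So the $S$-unit theorem simply does not apply to this equation, and ruling out non-constant $p_i$ needs a different mechanism (for instance a dominant-root height or valuation argument, as in the classical proofs). This is exactly where those proofs do substantial work; it cannot be folded into ``controlling vanishing subsums.'' The positive-characteristic obstacle you flag is equally real and is not resolved by your sketch: Derksen's replacement for Skolem--Mahler--Lech yields $p$-normal sets rather than arithmetic progressions, and you give no argument for why the P\'olya hypothesis should collapse these to a single integer period~$d$. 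The paper's route bypasses both problems simultaneously, since the linear-hull argument never meets the $p_i$ or Skolem--Mahler--Lech at all, and positive characteristic is handled uniformly via \cref{l:functional-p} and \cref{l:point-count}.
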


The converse of the previous theorem, that every series with such coefficients is a Pólya series, is of course trivial.

Let $R$ be a (commutative) domain.
A noncommutative formal power series $S=\sum_{w \in X^*} S(w) w \in R\llangle X \rrangle$ is \defi{rational} if it can be obtained from noncommutative polynomials in $R\langle X\rangle$ by successive applications of addition, multiplication, and the star operation $S^* = (1-S)^{-1} = \sum_{n \ge 0} S^n$ (if $S$ has zero constant coefficient).
See \cref{sec:preliminaries} below for formal definitions, and the book by Berstel--Reutenauer \cite{berstel-reutenauer11} for more background.
Extending the correspondence between univariate rational series and linear recurrence relations to the noncommutative setting, a theorem of Schützenberger shows that $S$ is rational if and only if it has a linear representation, or, equivalently, is recognized by a weighted finite automaton.

The definition of Pólya series extends to this noncommutative setting:
A rational series $S \in R\llangle X \rrangle$ is a \defi{Pólya series} if its nonzero coefficients are contained in a finitely generated subgroup $G\le K^\times$ of the quotient field $K$ of $R$.
Noncommutative rational Pólya series were first studied by Reutenauer in 1979 \cite{reutenauer79}.
Reutenauer introduced the notion of an unambiguous rational series and conjectured that these should be precisely the rational Pólya series \cite{reutenauer79}; see also \cite{reutenauer80}, \cite[\S 6]{reutenauer96} and \cite[p.233, Open Problem 4]{berstel-reutenauer11}.
He proved many equivalent characterizations of unambiguous rational series, for instance, showing that they are precisely the ones being recognized by an unambiguous weighted finite automaton.
The conjecture that rational Pólya series are unambiguous however so far remained open.

The goal of the present paper is to prove this conjecture.
We also recover a new proof of \cref{t:univariate} as a special case of our more general theorem, and give a characterization of those rational series recognized by a deterministic weighted finite automaton.
Moreover, this resolves all three conjectures in \cite[Chapitre 6]{reutenauer80}.

A rational series is \defi{unambiguous} if it can be obtained from noncommutative polynomials and the operations of addition, multiplication, and the star operation $S^* = (1-S)^{-1} = \sum_{n \ge 0} S^n$ in such a way that, in these operations, one never forms a sum of two nonzero coefficients. (This is defined more formally in \cref{def:unambiguous-rat} below; in \cref{sec:preliminaries} we also recall the definitions of rational series and (unambiguous) weighted automata.)
A formal series $S \in \bZ\llangle X \rrangle$ is \defi{linearly bounded} if there exists $C \ge 0$ such that $\abs{S(w)} \le C\length{w}$ for all nonempty words $w \in X^*$.

Let $R$ be a (commutative) domain and $K$ its quotient field.
An element $a \in K$ is \defi{almost integral} over $R$ if there exists $0 \ne c \in R$ such that $ca^n \in R$ for all $n \ge 0$.
A domain $R$ is \defi{completely integrally closed} if it contains all such almost integral elements.
We are mostly interested in the cases where $R=K$ is a field or $R=\bZ$.
In general, one cannot relax the completely integrally closed condition in the following theorem to integrally closed---see \cref{rem:cic} and the example following it.

\begin{theorem} \label{t:main}
  Let $R$ be a completely integrally closed domain with quotient field $K$.
  Let $X$ be a finite non-empty set, and let $S \in R \llangle X \rrangle$ be a rational series.
  Then the following statements are equivalent.
  \begin{equivenumerate}
  \item \label{main:polya} $S$ is a Pólya series.
  \item \label{main:unam-auto}$S$ is recognized by an unambiguous weighted finite automaton with weights in $R$.
  \item \label{main:unam-rat} $S$ is unambiguous \textup{(}over $R$\textup{)}.
  \item \label{main:formula} There exist $\lambda_1$, $\ldots\,$,~$\lambda_k \in R \setminus \{0\}$, linearly bounded rational series $a_1$, $\ldots\,$,~$a_k \in \bZ\llangle X\rrangle$, and a rational language $\cL \subseteq X^*$ such that $\supp(a_i) \subseteq \cL$ for all $i \in [1,k]$ and
    \[
      S(w) =
      \begin{cases}
        \lambda_1^{a_1(w)} \cdots \lambda_k^{a_k(w)} & \text{if $w \in \cL$},\\
        0 & \text{if $w \not\in \cL$}.
      \end{cases}
    \]
  \item \label{main:hadamard} $S$ is Hadamard sub-invertible, that is, the series
    \[
      \sum_{w \in \supp(S)} S(w)^{-1} w \in K\llangle X \rrangle
    \]
    is a rational series \textup{(}over $K$\textup{)}.
  \end{equivenumerate}
\end{theorem}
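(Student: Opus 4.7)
The plan is to cycle through the implications \cref{main:unam-auto} $\Rightarrow$ \cref{main:unam-rat} $\Rightarrow$ \cref{main:formula} $\Rightarrow$ \cref{main:polya} $\Rightarrow$ \cref{main:unam-auto}, handling the equivalence with \cref{main:hadamard} separately. The correspondence \cref{main:unam-auto} $\Leftrightarrow$ \cref{main:unam-rat} is Reutenauer's classical result converting between unambiguous rational expressions and unambiguous weighted automata, which I take as given. For \cref{main:unam-rat} $\Rightarrow$ \cref{main:formula} I unravel the inductive construction of an unambiguous expression: a sum of unambiguous series with disjoint supports corresponds to enlarging $\cL$; a product uses that the factorization $w = w_1 w_2$ forced by unambiguity makes the exponents $a_i$ additive; and the star operation uses that $\supp(S_0)$ is a code, so the factorization of each word in $\supp(S_0)^*$ into pieces is unique, yielding linearly bounded rational exponent series. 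The implication \cref{main:formula} $\Rightarrow$ \cref{main:polya} is immediate because the nonzero values of $S$ lie in $\langle \lambda_1, \dots, \lambda_k \rangle \le K^\times$.

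The main content is \cref{main:polya} $\Rightarrow$ \cref{main:unam-auto}. Let $G \le K^\times$ be a finitely generated group containing all nonzero coefficients of $S$, and fix a minimal-dimension linear representation $(u, \mu, v)$ with $\mu \colon X^* \to M_n(K)$. The strategy is to partition the orbit $u \mu(X^*) \subseteq K^{1 \times n}$ into finitely many \emph{$G$-proportionality classes}, where two rows are equivalent if one is a scalar multiple of the other by an element of $G$. Given such a finite partition, one defines a weighted automaton whose states are the classes: for a chosen representative $r_\sigma$ in class $\sigma$ and a letter $x$, one has $r_\sigma \mu(x) = g \cdot r_\tau$ for a unique $\tau$ and $g \in G$ (or $r_\sigma \mu(x) = 0$), and $g$ becomes the weight of the transition $\sigma \to \tau$. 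Because the class of $u \mu(w_1 \cdots w_i)$ is determined by $w$, each word admits a unique accepting path, so the automaton is unambiguous.

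The principal obstacle is proving that the partition is finite. Minimality of the representation forces each row $u \mu(w)$ to be a $K$-linear combination of a fixed collection $u \mu(w_1), \dots, u \mu(w_n)$. Pairing with right translates $\mu(w') v$ converts such dependencies into equations $S(ww') = \sum_i c_i(w) S(w_i w')$ whose terms lie in $G \cup \{0\}$. Choosing sufficiently many test words $w'$ yields systems of linear equations with unknowns in $G$. These are controlled by unit equations: in characteristic zero the Evertse--Schlickewei--Schmidt bound on nondegenerate solutions applies, and in positive characteristic the Derksen--Masser analogue does. An induction on the number of nonvanishing subsums handles degenerate solutions. Together these force the rows $u \mu(w)$ into finitely many $G$-classes. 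The complete integral closure hypothesis on $R$ is then invoked to ensure the resulting transition weights and final weights, a priori in $K$, actually lie in $R$; that this hypothesis cannot be weakened is the content of \cref{rem:cic}.

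For \cref{main:hadamard}, the implication \cref{main:unam-auto} $\Rightarrow$ \cref{main:hadamard} is direct: inverting every nonzero weight in an unambiguous automaton for $S$ yields an unambiguous automaton recognizing $\sum_{w \in \supp(S)} S(w)^{-1} w$. Conversely, if $S$ is Hadamard sub-invertible, let $R' \le K$ be the subring finitely generated over the prime ring by the entries of fixed linear representations of both $S$ and its Hadamard inverse. All coefficients of $S$ and their inverses lie in $R'$, so each nonzero coefficient of $S$ is a unit in $R'$. Since the unit group of a finitely generated integral domain is finitely generated (a classical result), $(R')^\times$ is finitely generated, which yields \cref{main:polya}.
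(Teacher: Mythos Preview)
Your argument for \ref{main:polya}$\,\Rightarrow\,$\ref{main:unam-auto} has a genuine gap: the claim that $\Omega=u\mu(X^*)$ falls into finitely many $G$-proportionality classes is too strong. If it held, your construction would produce a \emph{deterministic} automaton (each state has at most one outgoing edge per letter), so every rational P\'olya series would be determinizable. That is false, and the paper isolates exactly this distinction in \cref{t:determinizable}: determinizability is equivalent to the linear hull having dimension at most~$1$, a strictly stronger condition than being P\'olya. A concrete counterexample over $K=\bQ$ with $X=\{x,y\}$ is $u=(1,1)$, $\mu(x)=\begin{psmallmatrix}2&0\\0&3\end{psmallmatrix}$, $\mu(y)=\begin{psmallmatrix}0&1\\1&0\end{psmallmatrix}$, $v=(1,0)^T$. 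Every coefficient lies in $\langle 2,3\rangle$, and the representation is minimal, but $\Omega$ contains $(2^a,3^a)$ for all $a\ge 0$. No two of these vectors are $G$-proportional (since $2^{a-b}=3^{a-b}$ forces $a=b$), so there are infinitely many classes; the linear hull is all of $K^{1\times 2}$. Your unit-equation sketch cannot repair this: the equations $S(ww')=\sum_i c_i(w)S(w_iw')$ have coefficients $c_i(w)\in K$ that vary with $w$, and degenerate solutions genuinely proliferate here.

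What the paper does instead is allow the irreducible components $W_i$ of the linear hull $\overline{\Omega}$ to have dimension $\ge 2$. After changing the representation so that the $W_i$ become coordinate blocks in direct sum (\cref{l:repr-directsum}), unit equations are applied not to force $\Omega$ onto lines, but to show that for each block $W_i$, each \emph{column} of $\mu(x)|_{W_i}$ and the restriction of $v$ have at most one nonzero entry (\cref{l:functional,l:functional-p}). This yields unambiguity via the criterion of \cref{l:unambiguous} without forcing determinism. In the example above there is a single $2$-dimensional component, and indeed each column of $\mu(x)$, $\mu(y)$ has a single nonzero entry, giving an unambiguous automaton with two initial states. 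The remaining pieces of your outline (the equivalences with \ref{main:unam-rat}, \ref{main:formula}, \ref{main:hadamard}, and the use of complete integral closure to descend from $K$ to $R$) are in line with the paper.
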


The `hard' part of this theorem is showing \ref{main:polya}$\,\Rightarrow\,$\ref{main:unam-auto} in the case where $R=K$ is a field.
It involves the use of finiteness results on unit equations in characteristic $0$ and a recent extension of Derksen--Masser to positive characteristic \cite{derksen-masser12}.
We also make use of a new invariant associated to a linear representation, its linear hull.
The linear hull also allows a characterization of determinizable weighted automatons, see \cref{t:determinizable} below.

The other implications of \cref{t:main} are comparatively straightforward and are largely known.
The equivalence \ref{main:unam-auto}$\,\Leftrightarrow\,$\ref{main:unam-rat} was first noted by Reutenauer \cite[Chapitre VI, Théorème 1]{reutenauer80}. The implications \ref{main:unam-rat}$\,\Rightarrow\,$\ref{main:hadamard} and \ref{main:hadamard}$\,\Rightarrow\,$\ref{main:polya} are also known \cite[Exercise 3.1 of Chapter 6]{berstel-reutenauer11}, and so once \ref{main:polya}$\,\Rightarrow\,$\ref{main:unam-auto} is shown, the equivalence of \ref{main:polya}, \ref{main:unam-auto}, \ref{main:unam-rat}, and \ref{main:hadamard} is clear. Finally, \ref{main:polya}$\,\Leftrightarrow\,$\ref{main:formula} appears in the proof of \cite[Proposition 4(ii)]{reutenauer79}.
Despite this, we opt to give a self-contained proof of all of \cref{t:main} in the present paper.

Denote by $\mathbb 1_\cL$ the characteristic series of a set $\cL \subseteq X^*$.
By a theorem of Schützenberger \cite[Corollary 9.2.6]{berstel-reutenauer11}, any linearly bounded rational series $a \in \bZ\llangle X \rrangle$ can be expressed as a $\bZ$-linear combination of series of the form $\mathbb 1_\cL$ and $\mathbb 1_\cL \mathbb 1_\cK$ for rational languages $\cL$, $\cK$.
This gives a more explicit description of the series appearing as exponents in \ref{main:formula} of \cref{t:main}.

In computer science the question whether a given weighted automaton is equivalent to a deterministic (sequential) one has received considerable attention; we mention the survey \cite{lombardy-sakarovitch06}.
The question is of theoretical importance but also of practical relevance in natural language processing \cite{mohri97,buchsbaum-giancarlo-westbrook00,mohri-riley17}.
In this context, in contrast to our setting, however $K$ is usually a tropical semiring.
Nevertheless one can also pose these questions for $K$ a field, as is done for instance in \cite[\S5]{lombardy-sakarovitch06}.

When $K$ is a field, the \defi{linear hull} (see \cref{d:linearhull}) allows a characterization of rational series recognized by a deterministic weighted automaton.
See \cref{sec:determinizable} for another characterization, using bounded variation, in the spirit of Mohri \cite{mohri97}.

\begin{theorem} \label{t:determinizable}
  Let $K$ be a field and $X$ a finite non-empty set.
  For a rational series $S \in K\llangle X \rrangle$, the following statements are equivalent.
  \begin{equivenumerate}
  \item \label{det:det} $S$ is recognized by a deterministic weighted automaton.
  \item \label{det:dim} If $(u,\mu,v)$ is a minimal linear representation of $S$, then its linear hull has dimension at most $1$.
  \end{equivenumerate}
\end{theorem}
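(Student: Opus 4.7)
The plan is to read off the shape of the orbit $\{u\mu(w) : w \in X^*\} \subseteq K^n$ from the geometry of the linear hull. Granting that the linear hull of \cref{d:linearhull} is a Zariski-closed, $\mu(X^*)$-stable cone through the origin containing this orbit, dimension at most one means the hull is a finite union of lines $L_1$, $\ldots\,$,~$L_n$ through $0$ (plus possibly $\{0\}$ as an isolated component). A deterministic weighted automaton is exactly the data needed to describe such a configuration: one state per line, and one transition per (state, letter) pair.

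For \ref{det:det}$\,\Rightarrow\,$\ref{det:dim}, suppose $S$ is computed by a deterministic weighted automaton $\cA$ with state set $Q$, a single initial state (with some scalar weight), and at most one outgoing transition per state/letter pair. Then after reading any word $w$, the reachable configuration in $\cA$ is a scalar multiple of a single basis vector $e_q$, so the orbit of the initial vector lies in the finite union $\bigcup_{q \in Q} K e_q$. The minimal linear representation arises as a sub-quotient of $\cA$'s realization via the standard reduction to the reachable-observable part, and the resulting linear map carries the orbit in $\cA$ onto the orbit in the minimal representation, so the latter also lies in a finite union of lines through the origin. The linear hull, being the Zariski closure of a subset of such a union, therefore has dimension at most one.

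For \ref{det:dim}$\,\Rightarrow\,$\ref{det:det}, take a minimal $(u, \mu, v)$ whose linear hull $\fL$ has dimension at most one, and decompose $\fL$ into irreducible components. The one-dimensional components are lines $L_1$, $\ldots\,$,~$L_n$ through the origin; by $\mu(X^*)$-stability of $\fL$ and linearity of each $\mu(a)$, right multiplication by $\mu(a)$ must permute the set $\{L_1, \ldots, L_n, \{0\}\}$. Concretely, for a chosen nonzero generator $\xi_i$ of each $L_i$, the image $\xi_i \mu(a)$ is either zero or a scalar multiple $\beta \xi_j$ of the generator of a unique $L_j$. This data defines a deterministic weighted automaton with state set $\{L_1, \ldots, L_n\}$, transitions $(L_i, a) \mapsto (L_j, \beta)$, initial weight $\alpha$ at $L_{i_0}$ determined by $u = \alpha \xi_{i_0}$ (recall $u \in \fL$), and final weight $\xi_i v$ at $L_i$. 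A routine induction on $|w|$ verifies that this automaton computes $u \mu(w) v = S(w)$.

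The main obstacle is the structural input from \cref{d:linearhull} that feeds the second direction: one needs that $\fL$ is genuinely $\mu(X^*)$-stable, that its one-dimensional components are lines \emph{through the origin} rather than arbitrary affine lines, and that $u$ itself lies in $\fL$. These should be immediate from the definition, but they are the hinge on which the construction pivots. A smaller bookkeeping point in the first direction is to articulate the sub-quotient passage from the deterministic realization to the minimal one in a way that preserves containment of the orbit in a union of coordinate lines; this is the textbook reduction to the reachable-observable submodule, and should present no real difficulty.
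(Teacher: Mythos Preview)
Your proof is correct, and for \ref{det:dim}$\,\Rightarrow\,$\ref{det:det} it follows the same line as the paper's \cref{p:deterministic}: the irreducible components of the linear hull are lines through the origin, $\mu(a)$ maps each into another (or to zero), and this data assembles into a deterministic automaton. The paper achieves this by first applying \cref{l:repr-directsum} to place the lines along coordinate axes, whereas you work directly with abstract generators $\xi_i$; these are cosmetically different but the same argument.

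For \ref{det:det}$\,\Rightarrow\,$\ref{det:dim}, however, your route is genuinely different from the paper's and more elementary. The paper argues indirectly through a notion of \emph{bounded variation}: it equips a finitely generated group $G \le K^\times$ containing all weights with a length function $\ell$, shows that a series recognized by a deterministic automaton has bounded $\ell$-variation (\cref{l:det-variation}), and then shows that bounded $\ell$-variation forces the ratios $S(ww_i)/S(ww_j)$ to take only finitely many values, whence the orbit $u\mu(X^*)$ is covered by finitely many lines (\cref{l:det-dimbound}). Your argument bypasses all of this: in the deterministic automaton's own representation the orbit already lies in $\bigcup_{q} K e_q$, and passing to the minimal representation is a linear sub-quotient (restrict to the reachable span, then quotient by the unobservable kernel), which sends a finite union of lines to a finite union of lines. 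This is shorter and avoids the analytic detour. What the paper's approach buys is an additional equivalent characterization---bounded $\ell$-variation---that parallels Mohri's determinizability criterion over the tropical semiring, which may be of independent interest even though it is not needed for the bare equivalence \ref{det:det}$\,\Leftrightarrow\,$\ref{det:dim}.
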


Rephrasing \cref{t:determinizable}, a weighted automaton (with weights in a field $K$) is equivalent to a deterministic one if and only if \ref{det:dim} holds.
We also obtain the following (already known) corollary.

\begin{corollary} \label{c:finite}
  If $S \in K\llangle X \rrangle$ is a rational series whose coefficients take only \emph{finitely} many values, then $S$ is recognized by a deterministic weighted automaton.
  In particular, if $K$ is a finite field, then every rational series over $K$ is recognized by a deterministic weighted automaton.
\end{corollary}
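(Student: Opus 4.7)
The approach is a direct Myhill--Nerode style argument: we show that the minimal linear representation of $S$ visits only finitely many distinct state vectors, from which a deterministic weighted automaton can be read off.

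Fix a minimal linear representation $(\alpha,\mu,\beta)$ of $S$ of dimension $n$ and set $\phi(w):=\alpha\mu(w)\in K^n$. For $w\in X^*$ define the shifted series $f_w\colon X^*\to K$ by $f_w(w'):=S(ww')=\phi(w)\,\mu(w')\,\beta$. By minimality the column vectors $\mu(w')\beta$ span $K^n$, so $\phi(w)\mapsto f_w$ is injective; it therefore suffices to bound the number of distinct Hankel rows $f_w$. These rows span an $n$-dimensional $K$-subspace $W\subseteq K^{X^*}$, and one can pick $w''_1,\ldots,w''_n\in X^*$ for which the evaluation $W\to K^n$, $f\mapsto(f(w''_1),\ldots,f(w''_n))$, is injective (such $w''_j$ exist because the dual of $W$ is spanned by the coordinate evaluations $f\mapsto f(w'')$, equivalently because the Hankel matrix has column rank $n$). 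Each $f_w$ is then determined by the tuple $(S(ww''_1),\ldots,S(ww''_n))\in V^n$, where $V:=S(X^*)$ is the finite image of $S$ by hypothesis. Hence the orbit $Q:=\{\phi(w):w\in X^*\}$ has cardinality at most $|V|^n$.

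Since $Q$ is finite, closed under the transformations $v\mapsto v\mu(x)$ for $x\in X$, and contains $\phi(\varepsilon)=\alpha$, it is the state set of a deterministic weighted automaton: unique initial state $\alpha$ of initial weight $1$, transitions $v\xrightarrow{\,x\mid 1\,}v\mu(x)$, and terminal weight $\tau(v):=v\cdot\beta$. The output on input $w$ is then $\phi(w)\cdot\beta=S(w)$, yielding the first assertion. Alternatively, since $Q$ is finite its Zariski closure is zero-dimensional, so the linear hull of the representation has dimension at most $1$ and \cref{t:determinizable} applies. The ``in particular'' clause is immediate: over a finite field every rational series has image contained in the finite set $K$.

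The only substantive step is the finiteness of $Q$; the rank-$n$ property of the Hankel matrix converts the finite-image hypothesis on $S$ into a uniform bound on reachable state vectors, and no further input is required.
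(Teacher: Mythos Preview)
Your proof is correct, and the heart of it---choosing $w''_1,\ldots,w''_n$ so that the $i$-th coordinate of the state vector $\phi(w)$ equals $S(ww''_i)$, whence $\Omega=\{\phi(w):w\in X^*\}$ is finite---is exactly the content of \cref{l:good-basis} applied with $\Gamma$ the finite coefficient set. So the finiteness step is the same as in the paper.

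Where the two arguments diverge is in the last step. The paper observes that a finite $\Omega$ has linear hull of dimension $\le 1$ and then invokes \cref{p:deterministic}, which in turn passes through the direct-sum decomposition of \cref{l:repr-directsum}; this keeps the proof inside the general linear-hull framework developed for the main theorem. You instead build the deterministic automaton by hand, taking the finite orbit $Q$ itself as the state set with edge weights $1$ and terminal weights $v\mapsto v\beta$. This is more elementary and entirely self-contained---it bypasses the topology and the $\widehat Y$ construction---at the cost of not illustrating the linear-hull machinery. Your alternative closing sentence (finite $Q$ $\Rightarrow$ linear hull of dimension $\le 1$ $\Rightarrow$ \cref{t:determinizable}) is essentially the paper's route; one small remark: the relevant closure is in the paper's coarse topology (finite unions of linear subspaces), not the Zariski topology, though of course finiteness of $Q$ immediately gives $\dim\overline\Omega\le 1$ in either.
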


\subsection*{Notation}
Throughout the paper, let $R$ be a (commutative) domain with quotient field $K$.
Often we will be concerned only with the case where $R=K$ is a field.
Let $X$ be a finite non-empty set.
Let $G \le K^\times$ be a finitely generated subgroup, and set $G_0=G \cup \{0\}$.
When considering Pólya series $S$, we will assume that $G$ is such that $G_0$ contains all coefficients of $S$.

\subsection*{Outline}
The paper is organized as follows.
In \cref{sec:preliminaries} we recall necessary background on rational series.
In \cref{sec:linearhull} we introduce a useful topology and the notion of a linear hull; we also make a first crucial reduction in \cref{l:repr-directsum} and obtain \cref{c:finite}.
In \cref{sec:unit-equations} we use unit equations to prove a key lemma, with the majority of the work dedicated to dealing with positive characteristic.
In \cref{sec:linearhull,sec:unit-equations} we restrict to the case where $R=K$ is a field.
Now we can prove \cref{t:main} over fields: in \cref{sec:polya-unam-auto} we prove the hard direction \ref{main:polya}$\,\Rightarrow\,$\ref{main:unam-auto}.
In \cref{sec:unam-auto-unam-rat} we show \ref{main:unam-auto}$\,\Leftrightarrow\,$\ref{main:unam-rat}, in \cref{sec:unam-rat-formula} we show \ref{main:unam-rat}$\,\Rightarrow\,$\ref{main:formula}.
The implications \ref{main:unam-rat}$\,\Rightarrow\,$\ref{main:hadamard} and \ref{main:hadamard}$\,\Rightarrow\,$\ref{main:polya} are shown in \cref{sec:hadamard}.
In \cref{sec:all-together} we put all these pieces together and extend the main implication \ref{main:polya}$\,\Rightarrow\,$\ref{main:unam-auto} from fields to completely integrally closed domains, then prove \cref{t:main,t:univariate}.
In \cref{sec:determinizable} we conclude the proof of \cref{t:determinizable}.

\subsection*{Acknowledgments}
We thank Christophe Reutenauer for many helpful comments on an earlier version of this manuscript, and Daniela Petrisan, Amaury Pouly, as well as Jacques Sakarovitch for fruitful discussions on topics related to the paper.
We are grateful to the anonymous referee for their careful reading and helpful comments; in particular their request for the addition of illustrative examples will surely be appreciated by readers.

Bell was supported by NSERC grant RGPIN-2016-03632.
Smertnig was supported by the Austrian Science Fund (FWF) project J4079-N32.

\section{Preliminaries: rational series, linear representations, and weighted automata}
\label{sec:preliminaries}
We briefly recall the definitions of (noncommutative) rational series, linear representations, and weighted automata and how they relate to each other.
We largely follow the notation and terminology from \cite{berstel-reutenauer11}.

Let $X^*$ denote the free monoid on the alphabet $X$.
For a (noncommutative) formal power series $S \in R\llangle X \rrangle$ and a word $w \in X^*$, we write $S(w)$ for the coefficient of $w$, that is
\[
  S = \sum_{w \in X^*} S(w) w.
\]
The \defi{support} of $S$ is $\supp(S) = \{\, w \in X^* : S(w) \ne 0 \,\}$.

The ring of \defi{rational series} in $X$ is the smallest subring of the power series ring $R\llangle X \rrangle$ that contains the noncommutative polynomials $R\langle X \rangle$ and is closed under addition, multiplication, and the partial operation
\[
  S \mapsto S^* \coloneqq (1-S)^{-1} = \sum_{n \ge 0} S^n
\]
whenever $S$ has zero constant coefficient.

If $X=\{x\}$ is a singleton, then $R \llangle X \rrangle = R\llbracket x \rrbracket$.
One can easily check that $S \in R\llbracket x \rrbracket$ is a rational series if and only if there exist polynomials $P$,~$Q \in R[x]$ with $Q(0) =1$ such that $S=P/Q$ \cite[Proposition 6.1.1]{berstel-reutenauer11}.
In other words, $S$ is rational if and only if it is the power series expansion, at the point $0$, of a rational function not having a pole at $0$.
If $R=K$ is a field, it is also well-known (and not hard to check) that this is the case if and only if the coefficients of $S$ satisfy a linear recurrence relation.
Equivalently, there exist vectors $u \in K^{1 \times n}$, $v \in K^{n \times 1}$, and a matrix $A \in K^{n \times n}$ such that $S(x^i) = u A^i v$ for every $i \ge 0$.

A fundamental theorem of Schützenberger extends this description to multivariate noncommutative rational series.
A \defi{linear representation} of rank (dimension) $n$ is a triple $(u,\mu,v)$ where $u\in R^{1 \times n}$ and $v \in R^{n \times 1}$ are vectors, and $\mu \colon X^* \to R^{n \times n}$ is a monoid homomorphism from the free monoid $X^*$ to multiplicative monoid of $n \times n$-matrices.
Schützenberger showed that $S \in R\llangle X \rrangle$ is rational if and only if there exists a linear representation $(u,\mu,v)$ such that $S(w) = u \mu(w) v$ for every $w \in X^*$; see \cite[Theorem 1.7.1]{berstel-reutenauer11}.

Suppose $R=K$ is a field.
A linear representation is \defi{minimal} if the dimension $n$ is minimal among all possible linear representations of $S$.
This is the case if and only if the span of $u\mu(X^*) = \{\, u\mu(w)  : w \in X^* \,\}$ is $K^{1\times n}$ and the span of $\mu(X^*)v$ is $K^{n \times 1}$.

There is another, graph-theoretical, way to view linear representations over the domain $R$ that will come in handy.
A \defi{weighted \textup{(}finite\textup{)} automaton} $\cA=(Q,I,E,T)$ over the alphabet $X$ with weights in $R$ consists of a finite set of \defi{states} $Q$ and three maps
\[
  I \colon Q \to R,\quad E \colon Q \times X \times Q \to R,\quad T \colon Q \to R.
\]
A triple $(p,x,q) \in Q \times X \times Q$ is an \defi{edge} if $E(p,x,q) \ne 0$.
More specifically, we say that there is an edge from $p$ to $q$ labeled by $x$ and with weight $E(p,x,q)$.
A state $p \in Q$ is \defi{initial} if $I(p) \ne 0$ and \defi{terminal} if $T(p) \ne 0$.

A \defi{path} is a sequence of edges
\[
  P = (p_0,x_1,p_1)(p_1,x_2,p_2)\cdots (p_{l-1},x_l,p_l).
\]
Its \defi{weight} is $E(P) = \prod_{i=1}^l E(p_{i-1},x_i,p_i)$ and its \defi{label} is the word $x_1 \cdots x_l \in X^*$.
The path is \defi{accepting} if $p_0$ is an initial state and $p_l$ is a terminal state.
The automaton is \defi{trim} if every state lies on an accepting path.

The series $S \in R\llangle X \rrangle$ is \defi{recognized} by $\cA$ if
\begin{equation}\label{e:auto-recog}
  S(w) = \sum_{\substack{p_0,p_1,\ldots,p_l \in Q\\ w=x_1\cdots x_l,\, x_i \in X}} I(p_0)E(p_0,x_1,p_1) \cdots E(p_{l-1},x_l,p_l) T(p_l).
\end{equation}
Thus, the coefficient $S(w)$ is obtained by summing the weights of all accepting paths labeled by the word $w$, weighing each path by initial/terminal weights.
Two automata are \defi{equivalent} if they recognize the same series.
Obviously every weighted automaton is equivalent to a trim one.

There is an easy correspondence between linear representations and weighted automata.
Explicitly, the weighted automaton associated to a linear representation $(u,\mu,v)$ is given by $Q=[1,n]$, with $I(k)=u_k$, with $T(k)=v_k$, and $E(k,x,l)=\mu(x)_{k,l}$; here the subscripts denote the corresponding coordinates of the vectors $u$ and $v$, respectively the entries of the matrix $\mu(x)$.
Conversely, for a weighted automaton, one may without loss of generality assume $Q=[1,n]$, and then the correspondence above yields a linear representation (a different labeling of the states gives a conjugate linear representation, corresponding to a permutation of the basis vectors).
A series is recognized by the weighted finite automaton if and only if it is recognized by the associated linear representation.
Hence series recognized by automata and series with linear representations are the same, and by Schützenberger's Theorem coincide with rational series.

\begin{definition}
  Let $\cA$ be a weighted automaton.
  Then $\cA$ is \defi{unambiguous} if each $w \in X^*$ labels at most one accepting path.
  It is \defi{deterministic} (or \defi{sequential}) if
  \begin{itemize}
  \item there exists at most one initial state; and
  \item for each $(p,x) \in Q \times X$, there exists at most one $q \in Q$ with $E(p,x,q) \ne 0$.
  \end{itemize}
\end{definition}

Note that for an unambiguous automaton, in the expression \eqref{e:auto-recog} for $S(w)$, at most one summand is nonzero.
Deterministic weighted automata are clearly unambiguous.

\begin{remark}
  For automata without weights (equivalently, weights in the Boolean semiring $\cB=\{0,1\}$ with $1+1=1$), it is well known that every automaton is equivalent to a deterministic one.
  This is no longer true for weighted automata; there exist unambiguous weighted automata that are not equivalent to deterministic ones, and there exist weighted automata that are not equivalent to unambiguous ones.
\end{remark}

\begin{definition}
  A rational series $S \in R\llangle X \rrangle$ is a \defi{Pólya series} if there exists a finitely generated subgroup $G \le K^\times$ of the quotient field $K$ of $R$ such that $S(w) \in G_0=G \cup \{0\}$ for all $w \in X^*$.
\end{definition}

Let $S$,~$T \in R\llangle X \rrangle$ be two series with $\cK = \supp(S)$ and $\cL = \supp(T)$.
The addition $S+T$ is \defi{unambiguous} if $\supp(S) \cap \supp(T) = \emptyset$; the product $ST$ is unambiguous if every $w \in \cK \cL$ has a \emph{unique} expression $w=w_1 w_2$ with $w_1 \in \cK$ and $w_2 \in \cL$; and the star operation $S^*$ is unambiguous if $\cK$ is a \defi{code}, that is, every $w \in \cK^*$ has a \emph{unique} expression $w=w_1\cdots w_k$ with $w_i \in \cK$.

\begin{definition} \label{def:unambiguous-rat}
  The set of \defi{unambiguous rational series} is the smallest subset of $R\llangle X \rrangle$ that contains $R\langle X \rangle$ and is closed under \emph{unambiguous} addition, multiplication, and star operation.
\end{definition}

Note that unambiguous operations are defined in such a way that every coefficient of the resulting series is a product of coefficients of the initial series.
That is, one never forms a sum of two nonzero coefficients.
We thus we have the following.

\begin{lemma}
  Every unambiguous rational series is a Pólya series.
\end{lemma}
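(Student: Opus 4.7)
The plan is a straightforward structural induction on the construction of the unambiguous rational series, exploiting the fact that each unambiguous operation produces coefficients that are \emph{products} (not sums) of coefficients of the input series.

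More concretely, I would argue by induction on the number of rational operations used to build $S$ as an unambiguous rational series. For the base case, $S \in R\langle X \rangle$ is a polynomial, hence has only finitely many nonzero coefficients $s_1,\ldots,s_m \in R \setminus \{0\}$; the subgroup $G = \langle s_1,\ldots,s_m\rangle \le K^\times$ is finitely generated and contains every nonzero coefficient of $S$, so $S$ is a Pólya series. For the inductive step, suppose $S_1$ and $S_2$ are unambiguous rational series and, by induction, Pólya series with nonzero coefficients contained in $G_1 \cup \{0\}$ and $G_2 \cup \{0\}$, respectively, for finitely generated subgroups $G_1$,~$G_2 \le K^\times$. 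Set $G = \langle G_1, G_2 \rangle$, which is still a finitely generated subgroup of $K^\times$.

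Now one checks case by case that each of the three unambiguous operations yields a series whose nonzero coefficients lie in $G_0 = G \cup \{0\}$. For the unambiguous sum $S_1 + S_2$, the hypothesis $\supp(S_1) \cap \supp(S_2) = \emptyset$ ensures that for every $w \in X^*$ the coefficient $(S_1+S_2)(w)$ equals either $S_1(w)$ or $S_2(w)$, both of which lie in $G_0$. For the unambiguous product $S_1 S_2$, the uniqueness of factorization $w = w_1 w_2$ with $w_1 \in \supp(S_1)$, $w_2 \in \supp(S_2)$ gives $(S_1 S_2)(w) = S_1(w_1)S_2(w_2) \in G_0$ (or $0$ if no such factorization exists). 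For the unambiguous star $S_1^*$ (where $\supp(S_1)$ is a code, and necessarily $S_1$ has zero constant coefficient), the unique factorization $w = w_1 \cdots w_k$ with $w_i \in \supp(S_1)$ yields $S_1^*(w) = S_1(w_1) \cdots S_1(w_k) \in G_0$.

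There is no real obstacle here: the definition of unambiguous operations is engineered precisely so that no cancellation or summation of nonzero coefficients occurs, and thus every coefficient of the resulting series lies in the finitely generated multiplicative group generated by those of its constituents. The only minor bookkeeping point is to enlarge the group at each step so that it simultaneously contains the finitely generated groups from the two operands of a binary operation, which poses no difficulty since the subgroup generated by finitely many finitely generated subgroups of $K^\times$ is again finitely generated.
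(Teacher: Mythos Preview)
Your proposal is correct and follows exactly the approach the paper has in mind. The paper does not even write out a proof: it simply remarks, just before stating the lemma, that ``unambiguous operations are defined in such a way that every coefficient of the resulting series is a product of coefficients of the initial series,'' and declares the lemma to follow. Your structural induction is precisely the unpacking of that one-sentence observation.
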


\section{The linear hull of a linear representation}
\label{sec:linearhull}

In this section we consider only the case where $R=K$ is a field.
We introduce a topology and a related invariant of a linear representation that will be essential in the proof of the implication \ref{main:polya}$\,\Rightarrow\,$\ref{main:unam-auto} of \cref{t:main}.

\begin{definition}
  For a finite-dimensional vector space $V$, let $\cF(V)$ be the collection of all subsets $Y \subseteq V$ of the form $Y = V_1 \cup \dots \cup V_l$ with $l \in \bZ_{\ge 0}$ and $V_i \subseteq V$ vector subspaces.
\end{definition}

\begin{lemma}
  Every finite-dimensional vector space $V$ has a noetherian topology for which $\cF(V)$ is the collection of closed sets.
\end{lemma}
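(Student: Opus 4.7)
The plan is to verify that $\cF(V)$ contains $\emptyset$ and $V$, is closed under finite unions and arbitrary intersections, and satisfies the descending chain condition; these four facts together make $\cF(V)$ the collection of closed sets of a noetherian topology on $V$.

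The first three axioms are essentially formal. The set $\emptyset$ corresponds to the empty union (take $l=0$), while $V$ is itself a subspace. Closure under finite union is immediate from the definition: concatenate the defining lists of subspaces. Closure under finite intersection follows by distributivity,
\[
  \bigl(\bigcup_{i=1}^l V_i\bigr) \cap \bigl(\bigcup_{j=1}^m W_j\bigr) = \bigcup_{i,j} (V_i \cap W_j),
\]
together with the fact that the intersection of two subspaces is a subspace.

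The substantive content is the descending chain condition; arbitrary intersections will then follow formally. I would prove DCC by induction on $\dim V$. The base case $\dim V = 0$ is trivial since then $\cF(V) = \{\emptyset, V\}$. For the inductive step, suppose $Y_1 \supsetneq Y_2 \supsetneq \cdots$ is a strictly descending chain in $\cF(V)$. Not every $Y_n$ can equal $V$, so after passing to a tail we may assume $Y_1 \ne V$ and write $Y_1 = W_1 \cup \cdots \cup W_m$ as a union of proper subspaces of $V$. For each $j$ the sequence $(Y_n \cap W_j)_{n \ge 1}$ lies in $\cF(W_j)$ (intersecting each defining subspace of $Y_n$ with $W_j$), and since $\dim W_j < \dim V$, the induction hypothesis gives an $N$ with $Y_n \cap W_j = Y_N \cap W_j$ for all $n \ge N$ and all $j$. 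Taking unions over $j$ yields $Y_n = Y_N$ for all $n \ge N$, contradicting strict descent.

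Once DCC is in hand, closure under arbitrary intersections is automatic: given a family $\{Y_\alpha\}_{\alpha \in A} \subseteq \cF(V)$, the collection of its finite subintersections lies in $\cF(V)$ by the finite-intersection property, and by DCC that collection contains a minimal element $Z = Y_{\alpha_1} \cap \cdots \cap Y_{\alpha_k}$; minimality forces $Z \cap Y_\alpha = Z$ for every $\alpha$, whence $Z = \bigcap_\alpha Y_\alpha \in \cF(V)$. The principal (though modest) obstacle is the DCC step: a direct frontal attack is awkward because the decomposition of an element of $\cF(V)$ as a union of subspaces is not canonical—over a finite field $V$ itself can be a finite union of proper subspaces—but restricting the chain to each proper subspace $W_j$ and invoking induction circumvents this entirely.
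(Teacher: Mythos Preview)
Your argument is correct, but it follows a genuinely different route from the paper. The paper observes that every element of $\cF(V)$ is closed in the Zariski topology on $\mathbb{A}_K^n$; since the Zariski topology is noetherian, any intersection of elements of $\cF(V)$ reduces to a finite subintersection, which lies in $\cF(V)$ by distributivity, and the DCC is inherited from the Zariski topology as well. You instead prove the DCC directly by induction on $\dim V$, restricting a strictly descending chain to each proper subspace appearing in a decomposition of $Y_1$, and then derive closure under arbitrary intersections from DCC plus closure under finite intersections. Your approach is more self-contained and avoids any appeal to algebraic geometry; the paper's is shorter but presupposes that the reader knows the Zariski topology on affine space is noetherian.
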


\begin{proof}
  Set $\cF=\cF(V)$.
  Clearly $V \in \cF$ and $\emptyset  \in \cF$ (with $\emptyset$ represented by the empty union).
  By definition $\cF$ is closed under finite unions.
  To show that $\cF$ is the collection of closed sets of a topology, it remains to verify that $\cF$ is closed under intersections.
  Every $Y \in \cF$ is closed in the Zariski topology (identifying $V$ with $\mathbb A_K^n$ for $n=\dim_K V$), which is noetherian.
  Hence, any intersection is equal to a finite subintersection.
  The claim follows since intersections distribute over unions, and intersections of vector subspaces are again vector subspaces.
  Since every $Y \in \cF$ is Zariski-closed, the topology is noetherian.
\end{proof}

\begin{definition}
  Let $V$ be a finite-dimensional vector space.
  The \defi{linear Zariski topology} on $V$ is the topology whose collection of closed sets is $\cF(V)$.
\end{definition}

If $W \subseteq V$ is a vector subspace, then the subspace topology induced on $W$ is the linear Zariski topology on $W$.
All topological notions occurring in the remainder of the paper will refer to the linear Zariski topology.
We mention \cite[\S II.4.1 and \S II.4.2]{bourbaki:ca72} and \cite[Sections \href{https://stacks.math.columbia.edu/tag/004U}{004U} and \href{https://stacks.math.columbia.edu/tag/0050}{0050}]{stacks-project} as references on irreducible and noetherian topological spaces.

A topological space $X$ is \defi{irreducible} if it is non-empty and $X=Z_1 \cup Z_2$ with $Z_1$,~$Z_2$ closed implies $X=Z_1$ or $X=Z_2$.
A subset $Z \subseteq X$ is an \defi{irreducible component} if it is a maximal irreducible subspace.
By $\irr(X)$ we denote the set of all irreducible components of $X$.
Then $X = \bigcup_{Z \in \irr(X)} Z$.

\begin{lemma}
  The closed irreducible subsets of a vector space $V$ are exactly the
  \begin{enumerate}
  \item vector subspaces of $V$ if $K$ is infinite,
  \item vector subspaces of $V$ of dimension $\le 1$ if $K$ is finite.
  \end{enumerate}
\end{lemma}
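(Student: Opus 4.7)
The plan is to first show that any closed irreducible subset must in fact be a single vector subspace (rather than a more general finite union of subspaces), and then to determine which vector subspaces are themselves irreducible, treating the cases of infinite and finite $K$ separately.

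For the first step, let $Y$ be a closed irreducible subset and write $Y = V_1 \cup \dots \cup V_l$ with each $V_i \subseteq V$ a vector subspace. I would take this decomposition to be minimal, meaning no $V_i$ is contained in the union of the others (this is achieved by discarding redundant subspaces). If $l \ge 2$, then $Y = V_1 \cup (V_2 \cup \dots \cup V_l)$ is a decomposition of $Y$ as a union of two closed proper subsets by minimality, contradicting irreducibility. Hence $l = 1$ and $Y = V_1$ is a vector subspace.

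For the second step with $K$ infinite, I would verify that every vector subspace $W$ is irreducible. If $W = Z_1 \cup Z_2$ with $Z_i$ closed, then expanding each $Z_i$ as a finite union of subspaces presents $W$ itself as a finite union of subspaces. I would then invoke the standard fact that a vector space over an infinite field is not a finite union of proper subspaces (proved, e.g., by the usual pigeonhole argument on a line $v_1 + K v_2$ for suitably chosen $v_1, v_2$). This forces one of the subspaces in the decomposition of some $Z_i$ to equal $W$, whence $W \subseteq Z_i$ and irreducibility follows.

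For the case of finite $K$, I would first observe that $\{0\}$ and any $1$-dimensional subspace $L = Kv$ are irreducible: the only subspaces of $L$ are $\{0\}$ and $L$ itself, so the closed subsets of $L$ are $\emptyset$, $\{0\}$, and $L$, and no decomposition $L = Z_1 \cup Z_2$ into two proper closed subsets exists. Then, for $W$ of dimension $\ge 2$ over a finite $K$, I would use that $W$ is the union of its finitely many $1$-dimensional subspaces (every nonzero vector of $W$ lies in exactly one such line); choosing one such line $L_0$ and letting $Z$ be the union of the remaining lines gives $W = L_0 \cup Z$ with both $L_0$ and $Z$ proper closed subsets, so $W$ is reducible. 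Combining this with the first step yields the claim in the finite case.

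The argument is essentially structural and has no serious obstacle; the only mildly nontrivial input is the classical lemma that an infinite-field vector space is never a finite union of proper subspaces, which I would either invoke or prove in one line by pigeonhole.
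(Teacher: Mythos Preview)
Your proposal is correct and follows essentially the same approach as the paper. The paper's proof is a terse two-sentence version of your argument: it invokes the same standard fact that a vector space over an infinite field is not a finite union of proper subspaces to handle case (1), and for case (2) simply notes that over a finite field every nonzero subspace is a finite union of one-dimensional subspaces; your write-up fills in the details (the reduction to a single subspace, and the verification that lines are irreducible) that the paper leaves implicit.
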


\begin{proof}
  If $K$ is infinite, then a vector space cannot be expressed as a finite union of proper vector subspaces.
  It follows that the closed irreducible subsets of $V$ are exactly the vector subspaces.
  If $K$ is finite, we can write every nonzero vector subspace of $V$ as a finite union of one-dimensional vector subspaces.
\end{proof}

The \defi{dimension} of a closed set is the maximal dimension of its irreducible components, with $\dim \emptyset = -\infty$.

We recall the following basic properties, of which we will make use throughout.

\begin{lemma} \label{l:dense}
  Let $Y$ be a topological space.
  \begin{enumerate}
  \item\label{dense:subdense} If $Y$ is irreducible, $Y' \subsetneq Y$ is closed, and $\Omega \subseteq Y$ is dense, then $\overline{\Omega \setminus Y'} = Y$.
  \item\label{dense:continuous} If $Z \subseteq Y$ is irreducible, and $f\colon Y \to Y'$ is continuous, then $f(Z)$ is irreducible.
  \item\label{dense:continuous-component} If $Z \in \irr(Y)$ and $f\colon Y \to Y'$ is continuous, then $f(Z) \subseteq Z'$ for some $Z' \in \irr(Y')$.
  \item\label{dense:finite}  If $Y$ is noetherian, then $\irr(Y)$ is finite.
  \item\label{dense:finitedense} If $\irr(Y)$ is finite, $\Omega \subseteq Y$ is dense, and $Z \in \irr(Y)$, then $\overline{\Omega \cap Z} = Z$.
  \end{enumerate}
\end{lemma}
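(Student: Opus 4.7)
The plan is to dispatch each of the five parts in turn, using only standard properties of irreducible and noetherian topological spaces; I anticipate \ref{dense:finitedense} being the most delicate, since it combines density with the maximality of irreducible components.

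For \ref{dense:subdense} I will decompose $Y = \overline\Omega = \overline{\Omega \cap Y'} \cup \overline{\Omega \setminus Y'}$ and observe that the first piece lies in $Y' \subsetneq Y$, so irreducibility of $Y$ forces $\overline{\Omega \setminus Y'} = Y$. For \ref{dense:continuous} I will take any decomposition $f(Z) = A_1 \cup A_2$ in the subspace topology, lift it via closed $B_i \subseteq Y'$ with $A_i = B_i \cap f(Z)$, and observe that $Z = (Z \cap f^{-1}(B_1)) \cup (Z \cap f^{-1}(B_2))$; irreducibility of $Z$ forces $f(Z) = A_i$ for one of the indices. Part \ref{dense:continuous-component} then follows immediately, since $\overline{f(Z)}$ is irreducible in $Y'$ and every irreducible subset is contained in a maximal one (an irreducible component) by Zorn's lemma.

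For \ref{dense:finite} I will run the standard noetherian induction. Among the closed subsets of $Y$ that are not a finite union of irreducible closed sets, a minimal element exists (by noetherianness) if the collection is nonempty; such a minimal element cannot itself be irreducible, so it splits as a union of two proper closed subsets, each already finitely decomposable by minimality, yielding the desired contradiction. The maximal members of the resulting finite decomposition of $Y$ are then exactly the irreducible components.

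The main step is \ref{dense:finitedense}. Setting $Y'' = \bigcup_{W \in \irr(Y),\, W \neq Z} W$, I note that $\Omega \subseteq \overline{\Omega \cap Z} \cup Y''$, a closed set, so density of $\Omega$ gives $Y \subseteq \overline{\Omega \cap Z} \cup Y''$. Intersecting with $Z$ and invoking irreducibility of $Z$ reduces the task to excluding $Z \subseteq Y''$; but then $Z$, being irreducible and contained in a finite union of closed sets, would lie inside some $W \neq Z$ in $\irr(Y)$, contradicting the maximality of $Z$ as an irreducible closed subset. Hence $\overline{\Omega \cap Z} = Z$.
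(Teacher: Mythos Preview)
Your proof is correct in all five parts, and for \ref{dense:subdense}--\ref{dense:finite} it is essentially the same as what the paper does (the paper simply cites standard references for \ref{dense:continuous}--\ref{dense:finite}, and for \ref{dense:subdense} it phrases the same decomposition via the dense open complement $Y\setminus Y'$ rather than via irreducibility directly).

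For \ref{dense:finitedense} your route differs from the paper's, and is in fact the more careful one. The paper asserts that $Z=Y\setminus\bigcup_{i\ge 2}Z_i$, hence that $Z$ is \emph{open} in $Y$, and then uses that $Y\setminus Z$ is closed. But irreducible components need not be open; in the very topology under consideration (finite unions of subspaces) any two components meet at $0$, so $Y\setminus\bigcup_{i\ge 2}Z_i\subsetneq Z$ in general. Your argument sidesteps this: from $Y=\overline{\Omega\cap Z}\cup Y''$ and irreducibility of $Z$ you reduce to excluding $Z\subseteq Y''$, and then invoke the maximality of $Z$ among irreducible subsets to rule this out. That extra appeal to maximality is exactly what makes the argument go through without the (unjustified) openness claim.
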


\begin{proof}
  \ref*{dense:subdense}
  The set $U = Y \setminus Y'$ is non-empty and open and therefore dense in $Y$ by irreducibility \cite[Proposition 1 of \S II.4.1]{bourbaki:ca72}.
  By basic topology, the intersection of a dense subset with an open subset is dense in the open subset.
  Thus $\Omega \setminus Y' = \Omega \cap U$ is dense in $U$, and by transitivity, in $Y$.

  \ref*{dense:continuous} \cite[Proposition 4 of \S II.4.1]{bourbaki:ca72} or \cite[\href{https://stacks.math.columbia.edu/tag/0379}{Lemma 0379}]{stacks-project}.

  \ref*{dense:continuous-component}
  By \ref*{dense:continuous}, the subset $f(Z)$ of $Y'$ is irreducible.
  Every irreducible subset of $Y'$ is a subset of an irreducible component by \cite[Proposition 5 of \S II.4.2]{bourbaki:ca72} or \cite[\href{https://stacks.math.columbia.edu/tag/004W}{Lemma 004W}]{stacks-project}.

  \ref*{dense:finite} \cite[Proposition 10 of \S II.4.2]{bourbaki:ca72} or  \cite[\href{https://stacks.math.columbia.edu/tag/0052}{Lemma 0052}]{stacks-project}.

  \ref*{dense:finitedense} Let $Z_1$, $\ldots\,$,~$Z_m$ be the irreducible components of $Y$, with $Z=Z_1$.
  Then $Y = \bigcup_{i=1}^m Z_i$.
  Let $U \subseteq Z_1$ be relatively open in $Y$ and assume $U \cap \Omega = \emptyset$.
  We have to show $U = \emptyset$.
  Since irreducible components are closed (\cite[Proposition 2 of \S II.4.1]{bourbaki:ca72} or \cite[\href{https://stacks.math.columbia.edu/tag/004W}{Lemma 004W}]{stacks-project}), the set $Z_1 \setminus U$ is closed in $Y$.
  Therefore $\Omega \subseteq (Z_1 \setminus U) \cup Z_2 \cup \cdots \cup Z_m$ implies $Z_1 \subseteq \overline{\Omega} \subseteq (Z_1 \setminus U) \cup Z_2 \cup \cdots \cup Z_m$. The irreducibility of $Z_1$, together with the incomparability of irreducible components, implies $Z_1 \subseteq Z_1 \setminus U$, that is $U=\emptyset$.
\end{proof}

We can now define a key invariant associated to a linear representation.

\begin{definition} \label{d:linearhull}
   Let $(u,\mu,v)$ be a linear representation over the field $K$, and let
   \[
     \Omega \coloneqq u \mu(X^*) = \{\, u \mu(w) : w \in X^* \,\}
   \]
   be the \defi{(left) reachability set}.
  The closed set $\overline \Omega$ is the \defi{\textup{(}left\textup{)} linear hull} of $(u,\mu,v)$.
\end{definition}

Before continuing, we illustrate the linear hull on two examples.

\begin{example} \label{exm:polya}
  Let $K= \bQ$, let $X=\{a,b,c\}$, and define a linear representation $(u,\mu,v)$ by $u=(1,1,1)$, by $v = (1,1,0)^T$, and by
  \begin{align*}
     \mu(a) &= \begin{pmatrix} 2 & 0 & 0 \\ 0 & -2 & 0 \\ 0 & 0 & 3 \end{pmatrix},
    & \mu(b) &= \begin{pmatrix} 0 & 0 & 0 \\ 0 & 0 & 1 \\ 1 & 1 & 0 \end{pmatrix},
    & \mu(c) &= \begin{pmatrix} 0 & 0 & 0 \\ 0 & 0 & 0 \\ 0 & 0 & 5 \end{pmatrix}.
  \end{align*}
  The corresponding automaton is depicted in \cref{fig:polya}.
  It is easy to see that $(u,\mu,v)$ is minimal.
  We claim $\overline{\Omega} = \langle e_1+ e_2, e_3 \rangle \cup \langle e_1-e_2, e_3 \rangle$.
  The inclusion $\subseteq$ follows from $u \in \overline \Omega$ and the fact that $\langle e_1+ e_2, e_3 \rangle \cup \langle e_1-e_2, e_3 \rangle$ is closed under right action by $\mu(x)$ for all $x \in \{a,b,c\}$.
  For the inclusion $\supseteq$, observe $u\mu(a)^n = (2^n, (-1)^n2^n, 3^n)$.
  Therefore $\{\, u\mu(a)^{2n} : n \ge 0\,\} \subseteq \Omega$ is dense in $\langle e_1+e_2, e_3 \rangle$ while $\{\, u\mu(a)^{2n+1} : n \ge 0 \,\} \subseteq \Omega$ is dense in $\langle e_1-e_1, e_3 \rangle$.
  
  One could check that all nonzero coefficients of the series $S = 2 + 2b + 8a^2 + 10cb + 6ab + \cdots$ recognized by $(u,\mu,v)$ are of the form $2^e3^f5^g$ with $e$, $f$,~$g \ge 0$, and that $S$ is therefore a Pólya series.
  However, since we will construct an unambiguous linear representation recognizing the same series, we will see that $S$ is Pólya a posteriori.
\end{example}

\begin{figure}[h]
  \centering
  \begin{tikzpicture}[shorten >=1pt,node distance=3cm,on grid]
    \node[state]   (s1)                {$1$};
    \node[state]   (s2) [right of=s1] {$2$};
    \node[state]   (s3) [below of=s2,yshift=1cm] {$3$};
    \path[->]
    (s3) edge node [above] {$b$} (s1)
    (s3) edge [bend right=15] node [right] {$b$} (s2)
    (s2) edge [bend right=15] node [left,yshift=0.5mm] {$b$} (s3)
    (s1) edge [loop below]   node   {$2a$} (s1)
    (s2) edge [loop left]   node   {$-2a$} (s2)
    (s3) edge [loop below]   node [yshift=-2mm]  {$3a$} (s3)
    (s3) edge [loop right]   node   {$5c$} (s3);

    \path[->]
    ([xshift=0.15cm,yshift=-0.04cm]s1.north) edge +(0,0.75cm)
    +(-0.3cm,0.71cm) edge +(-0.3cm,-0.04cm);
    
    \path[->]
    ([xshift=0.15cm,yshift=-0.04cm]s2.north) edge +(0,0.75cm)
    +(-0.3cm,0.71cm) edge +(-0.3cm,-0.04cm);
        
    \path[->]
    ([xshift=-0.71cm]s3.west) edge +(0.75cm,0);

    \node[state]   (t1) [right of=s2,xshift=1.5cm]  {$1$};
    \node[state]   (t2) [right of=t1] {$2$};
    \node[state]   (t3) [below of=t2,yshift=1cm] {$3$};
    \path[->]
    (t1) edge [loop left] node [left] {$2a$} (t1)
    (t2) edge [loop left] node [left] {$3a$} (t2)
    (t1) edge node [above] {$b$} (t3)
    (t2) edge node [right] {$c$} (t3);

    \path[->] +([yshift=0.75cm]t1.north) edge (t1.north);
    \path[->] +([yshift=0.75cm]t2.north) edge (t2.north);
    \path[->] +(t3.south) edge ([yshift=-0.75cm]t3.south);
  \end{tikzpicture}
  \caption{\emph{Left: \cref{exm:polya}.} A weighted automaton recognizing a Pólya series.
    The automaton is minimal but ambiguous.
    A non-minimal unambiguous automaton on four states recognizes the same series (see \cref{exm:unambig,fig:unambig}).
    \emph{Right: \cref{exm:asymmetric}.} A weighted  automaton that is not determinizable and has a two-dimensional \emph{left} linear hull. The same series can be recognized by a co-deterministic weighted automaton (that is, deterministic when reading words from right to left).
    Correspondingly the \emph{right} linear hull is one-dimensional.} \label{fig:polya}\label{fig:asymmetric}
\end{figure}

\begin{example} \label{exm:asymmetric}
  Let $K= \bQ$, let $X=\{a,b,c\}$, and define a minimal linear representation $(u,\mu,v)$ by $u=(1,1,0)$, $v = (0,0,1)^T$, and
  \begin{align*}
     \mu(a) &= \begin{pmatrix} 2 & 0 & 0 \\ 0 & 3 & 0 \\ 0 & 0 & 0 \end{pmatrix},
    & \mu(b) &= \begin{pmatrix} 0 & 0 & 1 \\ 0 & 0 & 0 \\ 0 & 0 & 0 \end{pmatrix},
    & \mu(c) &= \begin{pmatrix} 0 & 0 & 0 \\ 0 & 0 & 1 \\ 0 & 0 & 0 \end{pmatrix}.
  \end{align*}
  The automaton is depicted in \cref{fig:asymmetric}. Here the (left) linear hull is $\overline{\Omega}=\overline{u \mu(X^*)} = \langle e_1,e_2 \rangle \cup \langle e_3 \rangle$.
  The dually defined right linear hull is $\overline{\mu(X^*) v} = \langle e_1 \rangle \cup \langle e_2 \rangle \cup \langle e_3 \rangle$.
  Observe that neither the dimension nor the number of components of the left and right linear hull coincide.

  Let $S$ be the series recognized by $(u,\mu,v)$. Then $S(a^nb) = 2^n$ and $S(a^nc) = 3^n$ for every $n \ge 0$ and $S(w) = 0$ for every other word $w$.
  By our convention, automata read words left to right.
  In view of \cref{t:determinizable}, and its natural dual for reading words right to left, the asymmetry in the linear hull reflects that $S$ can be recognized by a deterministic automaton when reading words right to left, but not when reading words left to right.
\end{example}

Our next goal is to show that we can change the linear representation in such a way that the irreducible components of the linear hull, which are vector subspaces, form a direct sum (\cref{l:repr-directsum}).
We do this by forming the \emph{external} direct sum of the irreducible components of the linear hull, thereby passing to a linear representation of possibly larger dimension.
For instance, in \cref{exm:polya}, the linear hull is a union of two planes in affine 3-space, necessarily intersecting in a line, while the new linear representation will be defined in affine 4-space and have a linear hull consisting of two 2-dimensional planes (intersecting only in the origin).
We will moreover do this in such a way that, if $S$ is a Pólya series, then all coefficients appearing in vectors of $\Omega$ have their coordinates in $G_0$.

Let $Y \subseteq V$ be a closed subset of a vector space $V$, and $\varphi\in \End_K(V)$ with $\varphi(Y) \subseteq Y$.
Then $\varphi$ is continuous in our topology.
Thus, if $Z$ is an irreducible component of $Y$, there exists an irreducible component $Z'$ of $Y$ such that $\varphi(Z) \subseteq Z'$ (see \ref{dense:continuous-component} of \cref{l:dense}).
In particular, there exists a map $f\colon \irr(Y) \to \irr(Y)$ such that $\varphi(Z) \subseteq f(Z)$ for all $Z \in \irr(Y)$.
In general, there are several possible choices for this map if $\varphi(Z)$ lies in an intersection of multiple irreducible components.

\begin{definition} \label{d:hat}
  Let $V$ be a finite-dimensional vector space and $Y \subseteq V$ a closed set.
  We define a vector space $\widehat Y$ together with a homomorphism $\sigma\colon \widehat Y \to V$ by
  \[
    \widehat Y = \bigoplus_{Z \in \irr(Y)}  Z
  \]
  and $\sigma = \sum_{Z \in \irr(Y)} \pi_Z$, where $\pi_Z \colon \widehat Y \to Z \subseteq V$ denotes the canonical projection.

  For $Z \in \irr(Y)$, let $\varepsilon_Z \colon Z \to \widehat Y$ denote the canonical embedding.
  For $\varphi\in \End_K(V)$ with $\varphi(Y) \subseteq Y$ and a map $f\colon \irr(Y) \to \irr(Y)$ with $\varphi(Z) \subseteq f(Z)$ for all $Z \in \irr(Y)$, let
  \[
    \widehat \varphi = {}_f \widehat \varphi \colon \widehat Y \to \widehat Y \quad\text{be defined by}\quad \widehat \varphi \circ \varepsilon_Z = \varepsilon_{f(Z)} \circ \varphi|_Z.
  \]
\end{definition}

The definition of $\widehat \varphi$ strongly depends on the choice of $f$.
However, as we will see in a moment, $\sigma \circ \widehat \varphi$ does not depend on $f$.
Since we will ultimately be interested in this composition, the particular choice of $f$ will not matter, and we suppress $f$ in the notation when this does not cause confusion.

\begin{lemma} \label{l:hatprop}
  Let all notation be as in \cref{d:hat}.
  \begin{enumerate} 
  \item \label{hat:value} For $Z \in \irr(Y)$ and $z \in Z$, we have $\varphi(z) = \sigma \circ \widehat \varphi \circ \varepsilon_Z(z)$.
  \item \label{hat:composition} If $\varphi$, $\psi \in \End_K(V)$ with $\varphi(Y) \subseteq Y$ and $\psi(Y) \subseteq Y$, then
    \[
      \sigma \circ {}_h\widehat{(\psi \circ \varphi)} = \sigma \circ {}_g \widehat \psi \circ {}_f \widehat \phi.
    \]
    for any choice of $f$, $g$,~$h \colon \irr(Y) \to \irr(Y)$ with $\varphi(Z) \subseteq f(Z)$, $\psi(Z) \subseteq g(Z)$, and $\psi \circ \varphi(Z) \subseteq h(Z)$ for all $Z \in \irr(Y)$.
  \end{enumerate}
\end{lemma}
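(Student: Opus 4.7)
The plan is to verify both parts by unfolding the definitions of $\sigma$, $\widehat\varphi$, and the canonical embeddings and projections of the direct sum decomposition $\widehat Y = \bigoplus_{Z \in \irr(Y)} Z$. For part \ref{hat:value}, I start from the defining identity $\widehat\varphi \circ \varepsilon_Z = \varepsilon_{f(Z)} \circ \varphi|_Z$, which gives, for $z \in Z$,
\[
  \sigma \circ \widehat\varphi \circ \varepsilon_Z(z) = \sigma \circ \varepsilon_{f(Z)}(\varphi(z)).
\]
Since $\pi_{Z'} \circ \varepsilon_{f(Z)}$ is the identity on $f(Z)$ when $Z' = f(Z)$ and zero otherwise, the composition $\sigma \circ \varepsilon_{f(Z)}$ is simply the inclusion $f(Z) \hookrightarrow V$. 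Hence the right-hand side equals $\varphi(z)$, yielding \ref{hat:value}. Note that the argument only uses the inclusion $\varphi(Z) \subseteq f(Z)$, not the specific choice of $f$, which is exactly why $\sigma \circ \widehat\varphi$ is $f$-independent and the notational suppression of $f$ after \cref{d:hat} is justified.

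For part \ref{hat:composition}, both sides are linear maps $\widehat Y \to V$, so by the direct-sum decomposition it suffices to verify agreement on each summand $\varepsilon_Z(Z)$. Fix $Z \in \irr(Y)$ and $z \in Z$. Applying part \ref{hat:value} to the endomorphism $\psi \circ \varphi$ together with the auxiliary function $h$ gives
\[
  \sigma \circ {}_h\widehat{(\psi \circ \varphi)} \circ \varepsilon_Z(z) = \psi(\varphi(z)).
\]
For the other side, the defining identity ${}_f\widehat\varphi \circ \varepsilon_Z(z) = \varepsilon_{f(Z)}(\varphi(z))$ reduces the calculation to computing $\sigma \circ {}_g\widehat\psi \circ \varepsilon_{f(Z)}(\varphi(z))$, and applying part \ref{hat:value} to $\psi$ at the element $\varphi(z) \in f(Z)$ again produces $\psi(\varphi(z))$. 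Both sides therefore agree on $\varepsilon_Z(Z)$ for every $Z \in \irr(Y)$, and the identity follows.

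I do not foresee any genuine obstacle here; the lemma is essentially a formal bookkeeping statement. The only point requiring care is tracking that $\varphi(z)$ genuinely lies in $f(Z)$, so that $\varepsilon_{f(Z)}$ can legitimately be applied to it—this is built into the hypothesis on $f$—and analogously for $h$ in the composition. Both parts then articulate the conceptual point that post-composition with $\sigma$ erases the arbitrariness in the choices of $f$, $g$, $h$ by collapsing an element of $\widehat Y$ to the corresponding sum of components in $V$.
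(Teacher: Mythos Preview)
Your proposal is correct and follows essentially the same approach as the paper: both parts are verified by unfolding the defining identity $\widehat\varphi\circ\varepsilon_Z=\varepsilon_{f(Z)}\circ\varphi|_Z$ together with $\sigma\circ\varepsilon_{Z'}=\mathrm{incl}_{Z'}$, and part \ref{hat:composition} is reduced to part \ref{hat:value} exactly as you describe.
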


\begin{proof}
  \ref*{hat:value}
  We have
  \[
    \begin{split}
      \sigma \circ \widehat \varphi \circ \varepsilon_Z(z) &= \sigma \circ \varepsilon_{f(Z)} \circ \varphi(z) = \sum_{Z' \in \irr(Y)} \pi_{Z'} \circ \varepsilon_{f(Z)} \circ \varphi(z)\\
      &= \pi_{f(Z)} \circ \varepsilon_{f(Z)} \circ \varphi(z) = \varphi(z).
    \end{split}
  \]

  \ref*{hat:composition}
  Let $Z \in \irr(Y)$ and $z \in Z$.
  Then, by applying \ref*{hat:value}, we have
  \[
    \sigma \circ {}_h\widehat{(\psi \circ \varphi)} \circ \varepsilon_Z(z) = \psi \circ \varphi(z).
  \]
  On the other hand,
  \[
    \sigma \circ {}_g \widehat \psi \circ {}_f \widehat \varphi \circ \varepsilon_Z(z) = \sigma \circ {}_g \widehat \psi \circ \varepsilon_{f(Z)} \circ \varphi(z) = \psi(\varphi(z)),
  \]
   where have again applied \ref*{hat:value} in the second equality.
\end{proof}

\begin{lemma} \label{l:good-basis}
  Let $S \in K\llangle X \rrangle$ be a rational series and $\Gamma = \{\, S(w) : w \in X^* \,\} \subseteq K$.
  Then $S$ has a minimal linear representation $(u, \mu, v)$ with $u\mu(w) \in \Gamma^{1\times n}$ for all $w \in X^*$.
  If $n\ge 1$, we can take $v=e_1$.
\end{lemma}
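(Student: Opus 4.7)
The plan is to start with any minimal linear representation $(u_0,\mu_0,v_0)$ of $S$ of dimension $n$ and then perform a change of basis engineered so that the vectors $u\mu(w)$ have coordinates that can be read off as coefficients of $S$. By minimality, the span of $\mu_0(X^*) v_0$ is all of $K^{n\times 1}$. Assuming $n \ge 1$, we have $v_0 \ne 0$ (otherwise $S=0$ would admit a representation of dimension $0$), so we can extract a basis of $K^{n\times 1}$ of the form
\[
  \mu_0(w_1) v_0,\ \mu_0(w_2) v_0,\ \ldots,\ \mu_0(w_n) v_0,
\]
with $w_1,\ldots,w_n \in X^*$. Moreover we can insist $w_1 = \varepsilon$ (the empty word), so that $\mu_0(w_1) v_0 = v_0$ is the first basis vector.

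Let $P \in \GL_n(K)$ be the matrix whose $i$-th column is $\mu_0(w_i) v_0$, and define the conjugate representation
\[
  u = u_0 P, \qquad \mu(w) = P^{-1} \mu_0(w) P, \qquad v = P^{-1} v_0.
\]
This is still a linear representation of $S$, still minimal (conjugation preserves rank), and because $w_1 = \varepsilon$ we have $Pe_1 = v_0$, hence $v = e_1$.

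It remains to check that $u\mu(w) \in \Gamma^{1 \times n}$ for every $w \in X^*$. For this, compute
\[
  u\mu(w) = u_0 P \cdot P^{-1} \mu_0(w) P = u_0 \mu_0(w)\, P.
\]
The $i$-th coordinate of this row vector is $u_0 \mu_0(w) \cdot \mu_0(w_i) v_0 = u_0 \mu_0(ww_i) v_0 = S(ww_i) \in \Gamma$, which is exactly what we need.

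There is no real obstacle here; the lemma is essentially the standard observation that in a minimal representation one has enough freedom to pick a basis of $K^{n\times 1}$ of the form $\mu(w_i) v$, combined with the identity $u\mu(ww_i) v = S(ww_i)$. The only minor point is the choice $w_1 = \varepsilon$ to secure $v=e_1$, which is possible precisely because $v_0$ itself lies in the spanning set $\mu_0(X^*) v_0$.
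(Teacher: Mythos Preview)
Your proof is correct and essentially identical to the paper's: both start from an arbitrary minimal representation, use minimality to extract a basis $\mu_0(w_i)v_0$ of $K^{n\times 1}$ with $w_1$ the empty word, and conjugate by the resulting change-of-basis matrix so that the $i$-th coordinate of $u\mu(w)$ becomes $S(ww_i)\in\Gamma$ and $v=e_1$. The only difference is notation.
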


\begin{proof}
  Let $(u',\mu',v')$ be a minimal linear representation of $S$.
  By minimality we have $\langle \mu'(w)v' : w \in X^* \rangle_K = K^{n \times 1}$.
  Let $w_1$, $\ldots\,$,~$w_n \in X^*$ be such that the vectors $b_i = \mu'(w_i)v'$ for $i \in [1,n]$ form a basis of $K^{n \times 1}$.
  If $n\ge 1$, then $v' \ne 0$ by minimality, and we may take $w_1=1$ (the empty word), ensuring $b_1 = v'$.
  Let $B \in K^{n\times n}$ be the matrix whose $i$-th column is $b_i$, and let $\mu \colon X^* \to K^{n\times n}$ be defined by $\mu(w) = B^{-1}\mu'(w)B$.
  Let $u=u'B$ and $v=B^{-1}v'$.
  Then $(u,\mu,v)$ is a minimal linear representation of $S$.

  Let $w \in X^*$ and $i \in [1,n]$.
  Then $u \mu(w) = u'\mu'(w)B$.
  The $i$-th coordinate of this vector is
  \[
    u'\mu'(w) b_i = u'\mu'(ww_i)v' = S(ww_i) \in \Gamma.
  \]
  By construction, we also have $v=B^{-1}v'=e_1$ if $n \ge 1$.
\end{proof}

\begin{lemma} \label{l:density-sub}
  Let $V$ be a vector space with basis $e_1$, $\ldots\,$,~$e_n$.
  Let $\Gamma \subseteq K$.
  For every vector subspace $W \subseteq V$, there exists a basis $f_1$, $\ldots\,$,~$f_m$ of $W$ such that
  \[
    (\Gamma e_1 + \dots + \Gamma e_n) \cap W \subseteq \Gamma f_1 + \cdots + \Gamma f_m.
  \]
\end{lemma}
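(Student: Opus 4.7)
The plan is to construct the basis $f_1,\ldots,f_m$ of $W$ via reduced row echelon form, taken with respect to the given coordinates $e_1,\ldots,e_n$. The point is that the pivot structure of such a basis forces the coefficients in any expansion of a vector $w \in W$ to agree with certain coordinates of $w$ itself with respect to $e_1,\ldots,e_n$.

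More concretely, I would start from any basis of $W$, write it as the rows of an $m \times n$ matrix over $K$ (using $e_1,\ldots,e_n$ to identify $V$ with $K^{1\times n}$), and apply Gaussian elimination to reduce the matrix to reduced row echelon form. The rows of the resulting matrix still span $W$ and are linearly independent, so they form a basis $f_1,\ldots,f_m$ of $W$. By definition of reduced row echelon form, there exist pivot indices $1 \le i_1 < i_2 < \cdots < i_m \le n$ such that the $e_{i_k}$-coordinate of $f_j$ is $\delta_{jk}$ (Kronecker delta) for all $j,k \in [1,m]$.

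Now take any $w \in (\Gamma e_1 + \cdots + \Gamma e_n) \cap W$ and write $w = \gamma_1 e_1 + \cdots + \gamma_n e_n$ with $\gamma_i \in \Gamma$. Since $f_1,\ldots,f_m$ is a basis of $W$, there exist unique scalars $\lambda_1,\ldots,\lambda_m \in K$ with $w = \sum_{j=1}^m \lambda_j f_j$. Comparing the coefficient of $e_{i_k}$ on both sides gives
\[
  \gamma_{i_k} \;=\; \sum_{j=1}^m \lambda_j \cdot \delta_{jk} \;=\; \lambda_k,
\]
so $\lambda_k = \gamma_{i_k} \in \Gamma$ for each $k$. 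Therefore $w \in \Gamma f_1 + \cdots + \Gamma f_m$, which is exactly what was claimed.

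There is no serious obstacle here: the entire argument is the observation that the pivot columns in reduced row echelon form pin down the expansion coefficients in the new basis as specific coordinates of the original vector, and these coordinates lie in $\Gamma$ by hypothesis. The hypothesis that $\Gamma$ is only a subset of $K$ (not necessarily a subgroup, and not necessarily closed under any operation) is not used, and indeed no such structure is needed for the argument.
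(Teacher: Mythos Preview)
Your proof is correct and is essentially the same as the paper's: the paper also produces a basis of $W$ in reduced row echelon form (phrased as finding $f_1,\ldots,f_m$ with $e_i^*(f_j)=\delta_{i,j}$ after renumbering the $e_i$), and then reads off the expansion coefficients at the pivot coordinates. The only cosmetic difference is that the paper renumbers so the pivots sit at positions $1,\ldots,m$, whereas you keep explicit pivot indices $i_1<\cdots<i_m$.
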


\begin{proof}
  Using standard reductions and possibly renumbering the basis elements of $V$, we can find a basis $f_1$, $\ldots\,$,~$f_m$ of $W$ with $e_i^*(f_j) = \delta_{i,j}$ for $i$, $j \in [1,m]$ (essentially Gram-Schmidt).

  Let
  \[
    w=\alpha_1 e_1 + \dots + \alpha_n e_n = \beta_1 f_1 + \dots + \beta_m f_m
  \]
  with $\alpha_1$, $\ldots\,$,~$\alpha_n \in \Gamma$ and $\beta_1$, $\ldots\,$,~$\beta_m \in K$.
  Applying $e_i^*$ to the equation shows $\beta_i = \alpha_i \in \Gamma$ for $i \in [1,m]$.
\end{proof}

\begin{lemma} \label{l:repr-directsum}
  Let $(u,\mu,v)$ be a linear representation representing a rational series $S \in K\llangle X \rrangle$ and $\Omega = u \mu(X^*)$.
  Let $\Gamma \subseteq K$ be such that $\Omega \subseteq \Gamma^{1\times n}$ and $\Omega v \subseteq \Gamma$.
  Let $\irr(\overline \Omega) = \{ W_1, \ldots, W_k \}$, let $m_i= \dim W_i$, and let $m=m_1+\dots+m_k$.
  
  For $i \in [1,k]$, let $W_i' \subseteq K^{1 \times m}$ be the $m_i$-dimensional subspace spanned by the standard basis vectors $e_{j} \in K^{1\times m}$ with $j \in [m_1 + \cdots + m_{i-1}+1, m_1+\cdots+m_i]$, so that in particular
  \[
    K^{1 \times m} = W_1' \oplus \cdots \oplus W_k'.
  \]
  Then there exists an $m$-dimensional linear representation $(u',\mu',v')$ of $S$ with
  \[
    \irr(\overline \Omega') = \{ W_1', \dots, W_k' \}
  \]
  where $\Omega' = u' \mu'(X^*)$.
  Moreover $\Omega' \subseteq \Gamma^{1 \times m}$ and $\Omega'v' \subseteq \Gamma$.
\end{lemma}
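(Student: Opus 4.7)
The plan is to apply the $\widehat{(\cdot)}$ construction of \cref{d:hat} directly to the linear hull $\overline\Omega$, taking $\widehat Y = W_1 \oplus \cdots \oplus W_k$ as the new state space of dimension $m$. The coordinate subspaces $\varepsilon_{W_i}(W_i) \subseteq \widehat Y$ will play the role of the desired $W_i'$. For each letter $x \in X$, right-multiplication by $\mu(x)$ is a linear endomorphism of $V = K^{1\times n}$ that preserves $\overline\Omega$; since each $W_i \mu(x)$ is a vector subspace, hence irreducible (in the finite-field case one notes $\dim W_i \le 1$, so $\dim W_i\mu(x) \le 1$ as well), we may choose $f_x \colon \irr(\overline\Omega) \to \irr(\overline\Omega)$ with $W_i \mu(x) \subseteq W_{f_x(i)}$. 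I then set $\mu'(x) = {}_{f_x}\widehat{\mu(x)}$, fix any $i_0$ with $u \in W_{i_0}$, let $u' = \varepsilon_{W_{i_0}}(u)$, and define $v' \in (\widehat Y)^*$ by $v'(\varepsilon_{W_i}(y)) = y \cdot v$ for $y \in W_i$.

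An easy induction on $\length{w}$ using the compatibility in \cref{l:hatprop}\,\ref{hat:composition} gives $\sigma(u' \mu'(w)) = u\mu(w)$, whence $u'\mu'(w) v' = u \mu(w) v = S(w)$, so $(u', \mu', v')$ is a linear representation of $S$ of dimension $m$. The same induction in fact shows that $u' \mu'(w) = \varepsilon_{W_{i(w)}}(u \mu(w))$ for a uniquely determined index $i(w) \in [1, k]$ (the routing obtained by iterating the $f_x$'s). In particular $\Omega' \subseteq W_1' \cup \cdots \cup W_k'$, and consequently $\overline{\Omega'} \subseteq W_1' \cup \cdots \cup W_k'$.

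The main obstacle is to prove the reverse inclusion, i.e., that every $W_i'$ actually appears in $\overline{\Omega'}$. For each $i$, the set $\bigcup_{j \ne i}(W_i \cap W_j)$ is a finite union of proper closed subsets of the irreducible $W_i$ (proper because distinct irreducible components are incomparable), hence itself a proper closed subset. Combining \cref{l:dense}\,\ref{dense:finitedense} (so that $\Omega \cap W_i$ is dense in $W_i$) with \cref{l:dense}\,\ref{dense:subdense}, the exclusive locus $\Omega_i^\circ \coloneqq (\Omega \cap W_i) \setminus \bigcup_{j\ne i} W_j$ is dense in $W_i$. For any $w$ with $u\mu(w) \in \Omega_i^\circ$, the identity $\sigma(u'\mu'(w)) = u \mu(w) \in W_{i(w)}$ forces $i(w) = i$ by exclusivity, so such $w$ contribute $\varepsilon_{W_i}(\Omega_i^\circ)$ to $\Omega' \cap W_i'$. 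Since $\varepsilon_{W_i}\colon W_i \to W_i'$ is a linear isomorphism and hence a homeomorphism for our topology, this contribution is dense in $W_i'$, giving $W_i' \subseteq \overline{\Omega'}$. The $W_i'$ are pairwise incomparable vector subspaces (they lie in distinct coordinate blocks), so they are precisely the irreducible components of $\overline{\Omega'}$.

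Finally, for the $\Gamma$-condition, I apply \cref{l:density-sub} with $e_1, \ldots, e_n$ the standard basis of $V$ to each $W_i$, obtaining a basis of $W_i$ with respect to which $\Gamma^{1\times n} \cap W_i$ has coordinates in $\Gamma$. Concatenating these bases to identify $\widehat Y \cong K^{1 \times m}$, every element $u'\mu'(w) = \varepsilon_{W_{i(w)}}(u\mu(w))$ has its $\widehat Y$-coordinates in $\Gamma$, because $u\mu(w) \in \Omega \cap W_{i(w)} \subseteq \Gamma^{1\times n} \cap W_{i(w)}$. This yields $\Omega' \subseteq \Gamma^{1 \times m}$, while $\Omega' v' = \Omega v \subseteq \Gamma$ is immediate from the recognition identity above. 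The density step in the third paragraph is the only place requiring genuine work; the remainder is bookkeeping built on the machinery already assembled.
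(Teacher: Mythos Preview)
Your proof is correct and follows essentially the same route as the paper's own argument: apply the $\widehat{(\cdot)}$ construction of \cref{d:hat} to $Y=\overline\Omega$, verify via \cref{l:hatprop} that the resulting $(u',\mu',v')$ still recognizes $S$, use the exclusive locus $\Omega_i^\circ=(\Omega\cap W_i)\setminus\bigcup_{j\ne i}W_j$ to force each $W_i'$ into $\overline{\Omega'}$, and invoke \cref{l:density-sub} for the $\Gamma$-condition. The only differences are presentational---you make the routing index $i(w)$ and the homeomorphism $\varepsilon_{W_i}\colon W_i\to W_i'$ explicit, whereas the paper leaves these implicit.
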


\begin{proof}
  If $n=0$, then $(u',\mu',v')=(u,\mu,v)$ trivially has the desired properties.
  To avoid this degenerate case, from now on assume $n \ge 1$.

  For each $W_i$ choose a basis $f_{(i,1)}$, $\ldots\,$,~$f_{(i,m_i)}$ as in \cref{l:density-sub}.
  Denote by $e_1$, $\ldots\,$,~$e_n$ the standard basis of $K^{1\times n}$.
  For $x \in X$ let $\varphi_x \colon K^{1\times n} \to K^{1 \times n}$ denote the homomorphism that is represented by $\mu(x)$, and let $\psi\colon K^{1\times n} \to K$ denote the homomorphism represented by $v$.
  That is, $u \mu(x_{1} \cdots x_{l}) v = \psi \circ \varphi_{x_l} \circ \dots \circ \varphi_{x_1}(u)$ for $x_1$, $\ldots\,$,~$x_l \in X$.

  For each $x \in X$, the homomorphism $\varphi_x$ is continuous and closed and $\varphi_x(\Omega) \subseteq \Omega$.
  Hence also $\varphi_x(\overline \Omega) \subseteq \overline \Omega$.
  Without restriction $u \in W_1$.
  Let $Y=\overline \Omega$.
  We denote by $\varepsilon_i \colon W_i \to \widehat Y$ the canonical embedding.
  By \cref{l:hatprop},
  \begin{equation}  \label{e:expanded-rep}
    \psi \circ \varphi_{x_l} \circ \dots \circ \varphi_{x_1}(u) = \psi \circ \sigma \circ \widehat \varphi_{x_l} \circ \dots \circ \widehat \varphi_{x_1} \circ \varepsilon_1(u).
  \end{equation}

  Set $Q = \{\, (i,j) : i \in [1,k],\, j \in [1,m_i] \,\}$.
  Then the family $(\varepsilon_i(f_{(i,j)}))_{(i,j)\in Q}$ is a basis of $\widehat Y$.
  With respect to this choice of basis, let $u' \in K^{1\times Q}$ represent $\varepsilon_1(u)$, let $v' \in K^{Q \times 1}$ represent $\psi \circ \sigma \colon K^{1 \times Q} \to K$, and let $A'_x$ represent $\widehat \varphi_x \colon K^{Q \times Q} \to K^{Q \times Q}$.
  Setting $\mu'(x)=A'_x$, the tuple $(u', \mu', v')$ is a linear representation of $S$ by \cref{e:expanded-rep}.

  Clearly $\Omega' \subseteq \varepsilon_1(W_1) \cup \cdots \cup \varepsilon_k(W_k)$, and we show that the right side is the decomposition of $\overline{\Omega'}$ into irreducible components.
  The sets $\varepsilon_i(W_i)$ are irreducible closed subsets of $\widehat Y$, and $\varepsilon_i(W_i) \cap \varepsilon_j(W_j) = 0$ if $i \ne j$.
  Let $i \in [1,k]$.
  The set
  \[
    \Omega_i = (\Omega \cap W_i) \setminus \bigcup \{\, W_j : j \in [1,k],\, i \ne j \,\}
  \]
  is dense in $W_i$.
  If $w \in X^*$ with $u \mu(w) \in \Omega_i$, then necessarily $u'\mu'(w) \in \varepsilon_i(W_i) \cap \Omega'$.
  Hence $\varepsilon_i(W_i) \cap \Omega'$ is dense in $\varepsilon_i(W_i)$.

  Finally, by choice of the $f_{(i,j)}$, we have
  \[
    \Omega \cap W_i \subseteq (\Gamma e_1 + \cdots + \Gamma e_n) \cap W_i \subseteq \Gamma f_{(i,1)} + \cdots + \Gamma f_{(i,m_i)}.
  \]
  Thus $u'\mu'(w) \in \Gamma^{1 \times Q}$ for all $w \in X^*$.
  If $w \in X^*$ with $u'\mu'(w) \in \Omega'$, then $u\mu(w) \in \Omega$ and $u'\mu'(w)v' = u\mu(w)v \in \Gamma$.
\end{proof}

We are now in a position to deal with the (relatively easy) case in which the linear hull has dimension at most $1$.
As an immediate corollary we obtain the direction \ref{main:polya}$\,\Rightarrow\,$\ref{main:unam-auto} of \cref{t:main} in the special case where $S$ has only \emph{finitely many} distinct coefficients.

\begin{proposition} \label{p:deterministic}
  Let $S \in K\llangle X \rrangle$ be a rational series with a linear representation $(u,\mu,v)$ whose linear hull has dimension $\le 1$.
  Then $S$ is recognized by a deterministic weighted automaton.
\end{proposition}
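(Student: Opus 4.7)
The plan is to apply \cref{l:repr-directsum} to put the given linear representation into a form where the linear hull decomposes as a union of coordinate axes, and then read off a deterministic weighted automaton directly from this decomposition.

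First, I would invoke \cref{l:repr-directsum} to replace $(u,\mu,v)$ with an equivalent linear representation $(u',\mu',v')$ on $K^{1\times m}$ such that the irreducible components of $\overline{\Omega'}$ are precisely the coordinate subspaces $W_1', \ldots, W_k'$, where $W_i'$ is the span of $m_i = \dim W_i$ consecutive standard basis vectors. Since $\dim \overline{\Omega} \le 1$, each $W_i$, and hence each $W_i'$, has dimension at most $1$. Setting aside the trivial case $S = 0$, I may assume every $m_i = 1$, so $W_i' = K e_i$, $m = k$, and $\Omega' \subseteq \bigcup_{i=1}^k K e_i$. Consequently every vector in $\Omega'$ has at most one nonzero coordinate; in particular, because $u$ lies in the irreducible component $W_1$, the initial vector $u' = \varepsilon_1(u) \in K e_1$ is supported on a single coordinate.

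Next, I would view $(u',\mu',v')$ as a weighted automaton $\cA'$ on the state set $Q = [1,k]$ and trim it by discarding every state $i$ for which no word $w$ satisfies $u'\mu'(w) \in K e_i \setminus \{0\}$. The trimmed automaton should be deterministic. There is at most one initial state by the previous paragraph. For the transition condition, fix a reachable state $i$ witnessed by $u'\mu'(w) = \alpha e_i$ with $\alpha \ne 0$, and let $x \in X$; then $u'\mu'(wx) = \alpha \cdot r_i$, where $r_i$ denotes the $i$-th row of $\mu'(x)$. This vector lies in $\Omega' \subseteq \bigcup_j K e_j$, so $r_i$ is either zero or a scalar multiple of a single $e_{i'}$, meaning there is at most one edge leaving state $i$ labeled by $x$.

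I do not anticipate a substantial obstacle here: essentially all of the work is carried by \cref{l:repr-directsum}, and the rest is a direct inspection of the resulting matrices. The only routine verifications needed are that removing unreachable states does not alter the recognized series, and that reachability is preserved under transitions (if $\alpha e_i \mu'(x)$ has a nonzero $j$-th entry, then state $j$ is reachable via $wx$), so that the trimmed transition structure is well defined.
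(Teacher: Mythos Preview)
Your proposal is correct and follows essentially the same route as the paper: apply \cref{l:repr-directsum} so that the irreducible components become coordinate lines, and then read off determinism from the shape of $u'$ and the rows of $\mu'(x)$. The trimming step is in fact unnecessary---since $\mu'(x)$ is continuous and maps $\overline{\Omega'}$ into itself, each component $Ke_i$ is sent into some $Ke_j$, so \emph{every} row of $\mu'(x)$ already has at most one nonzero entry; equivalently, density of $\Omega'\cap W_i'$ in $W_i'=Ke_i$ forces every state to be reachable, so your trim removes nothing.
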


\begin{proof}
  Replacing the representation by one as in \cref{l:repr-directsum}, we can assume that the spaces in $\irr(\overline \Omega)$ form a direct sum.
  Thus, if $W \in \irr(\overline \Omega)$, $a \in W$, and $x \in X$ with $0 \ne a \mu(x)$, then there exists a unique $W' \in \irr(\overline \Omega)$ with $a \mu(x) \in W'$.
  This means that each column of $\mu(x)$ contains at most one non-zero entry.
  Hence the weighted automaton associated to this linear representation is deterministic.
\end{proof}

\begin{proof}[Proof of \cref{c:finite}]
  Choose a representation as in \cref{l:good-basis} with $\Gamma = \{\, S(w): w \in X^* \, \}$ being finite.
  Then $\Omega = u \mu(X^*) \subseteq \Gamma^{1 \times n}$ is a finite set, and hence $\overline \Omega$ is a finite union of vector spaces of dimension $\le 1$.
  Thus $\dim \overline \Omega \le 1$, and \cref{p:deterministic} implies the claim.
\end{proof}

\begin{remark}
  \Cref{c:finite} was known before.
  If $G$ is finite, then $S$ is a finite linear combination of characteristic series of rational languages \cite[Corollary 3.2.6]{berstel-reutenauer11}.
  Since each of these characteristic series can be recognized by a deterministic automaton, the result follows \cite[\S 6]{reutenauer96}.
  However, the proof we give here is more in line with the one for our general result.
\end{remark}

\section{An important lemma}
\label{sec:unit-equations}

In this section we again consider only the case where $R=K$ is a field.
We are now ready to prove a key lemma in characteristic $0$.
Its proof depends on unit equations.
As a consequence, a variant for positive characteristic is more complicated and will follow at the end of the section.

We recall the fundamental finiteness result on unit equations in characteristic $0$ that we will be using.
For number fields it was proved independently by Evertse \cite{evertse84} and van der Poorten--Schlickewei \cite{vdpoorten-schlickewei82}; the extension to arbitrary fields appears in \cite{vdpoorten-schlickewei91}.
We refer to \cite[Chapter 6]{evertse-gyory15} or \cite[Theorem 7.4.1]{bombieri-gubler06} for more details.

\begin{proposition}[Evertse; van der Poorten--Schlickewei]
  Suppose $\chr K = 0$.
  Let $m \ge 2$, and $a_1$, $\ldots\,$,~$a_m \in K^\times$.
  Then there exist only finitely many projective points $(x_1: \cdots : x_m)$ with coordinates $x_1$, $\ldots\,$,~$x_m \in G$ such that
  \begin{equation} \label{eq:sunit}
    a_1 x_1 + \cdots + a_m x_m = 0
  \end{equation}
  and $\sum_{i \in I} a_i x_i \ne 0$ for any non-empty, proper subset $I$ of $[1,m]$.
\end{proposition}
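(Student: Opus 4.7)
The plan is to reduce the statement to the case of a number field and then apply Schlickewei's $S$-unit generalization of the Schmidt Subspace Theorem. As a normalization, divide \cref{eq:sunit} through by $a_m x_m$. The statement becomes: for the finitely generated group $H \le K^\times$ generated by $G$ together with the ratios $a_i/a_m$, there are only finitely many projective points $(y_1 : \cdots : y_{m-1} : 1)$ satisfying $y_1 + \cdots + y_{m-1} + 1 = 0$ with $y_i \in H$ and with no proper subsum vanishing.

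Next I reduce to the case in which $K$ is a number field. The elements $a_1, \ldots, a_m$ together with a finite generating set of $G$ lie in a subfield $K_0 \subseteq K$ that is finitely generated over $\bQ$. If $K_0$ has positive transcendence degree over $\bQ$, a specialization argument produces a ring homomorphism $\varphi \colon R_0 \to \lbar \bQ$ from a suitable finitely generated subring $R_0 \subseteq K_0$ containing $H$ and the $a_i$, chosen so that $\varphi$ is injective on $H$ and sends every non-degenerate solution of the equation to a non-degenerate solution over $\lbar \bQ$. Such a $\varphi$ exists because only countably many specialization parameters have to be avoided---those that would force a proper subsum to vanish, some generator of $H$ to become torsion, or some $a_i$ to vanish. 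Hence finiteness over all number fields implies finiteness over $K$.

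Finally, when $K$ is a number field, enlarge $H \cdot \{a_1, \ldots, a_m\}$ to the group of $S$-units $\cO_{K,S}^\times$ for a sufficiently large finite set $S$ of places of $K$. The statement then reduces to the $S$-unit equation finiteness theorem, whose proof via the Subspace Theorem proceeds as follows: the Subspace Theorem constrains non-degenerate solutions $(x_1 : \cdots : x_m)$ to lie in finitely many proper linear subspaces $L_1, \ldots, L_r \subsetneq K^m$; on each $L_j$ one has a nontrivial linear relation among the coordinates, which together with the non-degeneracy hypothesis forces the projective height of any solution to be bounded; Northcott-type finiteness of $S$-units of bounded height then yields the conclusion (one may also induct on $m$ by restricting to $L_j$ and absorbing the new relation into the equation). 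The main obstacle is the Subspace Theorem itself, a deep Diophantine-approximation result that one cites as a black box (e.g., \cite[Chapter 6]{evertse-gyory15} or \cite[Theorem 7.4.1]{bombieri-gubler06}); the two reductions above are comparatively routine.
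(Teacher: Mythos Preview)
The paper does not prove this proposition; it is quoted from the literature with pointers to \cite[Chapter~6]{evertse-gyory15} and \cite[Theorem~7.4.1]{bombieri-gubler06} for a proof, so there is no argument in the paper to compare your sketch against.

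That said, your specialization step contains a real gap. You want a homomorphism $\varphi\colon R_0\to\overline{\bQ}$ avoiding the specializations that (i)~kill some $a_i$, (ii)~make a generator of $H$ torsion, and (iii)~make some proper subsum of some solution vanish. Conditions (i) and (ii) amount to finitely or countably many closed conditions and are harmless. But condition~(iii) ranges over \emph{all} non-degenerate solutions, of which there may a priori be infinitely many---this is precisely what you are trying to rule out. You then assert that since there are only countably many bad conditions a good $\varphi$ exists; but $\overline{\bQ}$ is countable, so the set of $\overline{\bQ}$-valued points of $\Spec R_0$ is countable and can certainly be exhausted by a countable union of proper closed subsets. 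The argument as written does not produce $\varphi$, and trying to patch it by choosing a separate $\varphi_N$ good for the first $N$ solutions fails too, because the bound on the specialized equation then depends on $\varphi_N$.

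The proofs in the cited references close this gap in one of two ways: either one proves a \emph{uniform} bound on the number of non-degenerate solutions depending only on $m$ and the rank of $G$ (Evertse--Schlickewei--Schmidt), after which any injective-on-$H$ specialization suffices; or one inducts on the transcendence degree of $K$ over $\bQ$, invoking at each stage a function-field analogue of the unit-equation theorem (equivalently, a height argument over the generic fibre) to descend. Your number-field sketch via the Subspace Theorem is fine as an outline, but the reduction to number fields is not ``comparatively routine.''
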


A solution $(x_1,\ldots,x_m)$ of \eqref{eq:sunit} with $\sum_{i\in I} a_i x_i \ne 0$ for every $\emptyset \ne I \subsetneq [1,m]$ is called \defi{non-degenerate}.
So, counted as projective points, there are only finitely many non-degenerate solutions with coordinates in $G$.
It is easily seen that there can be infinitely many degenerate solutions (even when considered as projective points), but by definition, the affine coordinates of the degenerate solutions lie in a finite union of proper vector subspaces.

\begin{lemma} \label{l:functional}
  Suppose $\chr K=0$.
  Let $V$ be a vector space with basis $e_1$, $\ldots\,$,~$e_n$.
  Suppose that $\Omega \subseteq G_0e_1 + \dots + G_0e_n$ is a dense subset of $V$.
  Then, for all $\varphi\in \Hom_K(V,K)$ with $\varphi(\Omega) \subseteq G_0$, there exists at most one $i \in [1,n]$ with $\varphi(e_i) \ne 0$.
\end{lemma}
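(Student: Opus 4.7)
The plan is to argue by contradiction. Suppose at least two of the $\varphi(e_i)$ are nonzero; after relabeling, assume $a_i := \varphi(e_i) \ne 0$ for $i \in [1,m]$ with $m \ge 2$, and $a_i = 0$ for $i > m$. For every $v = \sum_i \gamma_i e_i \in \Omega$, both the coordinates $\gamma_i \in G_0$ and $\gamma_0 := \varphi(v) = \sum_{i=1}^m a_i \gamma_i \in G_0$, so the tuple $(\gamma_0,\gamma_1,\ldots,\gamma_m)$ is a solution with entries in $G_0$ of the $(m+1)$-term unit equation
\[
  -x_0 + a_1 x_1 + \cdots + a_m x_m = 0.
\]
The aim is to show that every such $v$ lies in one of finitely many proper vector subspaces of $V$. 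Since $\chr K = 0$ implies $K$ is infinite, $V$ cannot be written as a finite union of proper subspaces (i.e.\ $V$ is irreducible in the topology of \cref{sec:linearhull}), which will contradict the density of $\Omega$ in $V$.

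Partition $\Omega$ according to the support $J \subseteq \{0,1,\ldots,m\}$ of $(\gamma_0,\gamma_1,\ldots,\gamma_m)$; there are only finitely many $J$. If $|J|\le 1$, the equation either is violated (so no such $v$ exists) or forces $\gamma_1=\cdots=\gamma_m = 0$, in which case $v$ lies in $\Span(e_{m+1},\ldots,e_n)$, a proper subspace since $m \ge 2$. For $|J|\ge 2$, restricting to the support $J$ yields a unit equation $\sum_{j \in J}c_j x_j = 0$ (with $c_0=-1$, $c_j=a_j$) in $|J|$ unknowns, all coefficients nonzero, whose $G$-valued solutions we analyze via the Evertse--van der Poorten--Schlickewei theorem.

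By that theorem, the non-degenerate solutions form finitely many projective points in $K^J$, hence lie on finitely many lines, while the degenerate solutions lie in a finite union of proper subspaces of $K^J$ (as recalled in the remark after the theorem). Consider the linear map $\pi_J\colon V_J \to K^J$, $v \mapsto (x_j(v))_{j \in J}$, where $x_j(v)=\gamma_j$ for $j\ge 1$ and $x_0(v)=\varphi(v)$, and $V_J = \{v \in V : \gamma_i = 0 \text{ for } i \in [1,m]\setminus J\}$. Pulling each of the finitely many subspaces $M_k$ of $K^J$ back under $\pi_J$, one obtains finitely many linear subspaces $L_k\subseteq V$. A short computation (splitting on whether $0 \in J$) gives
\[
  \dim L_k \;\le\; (n-m) + \dim M_k,
\]
and since $\dim M_k = 1$ for the non-degenerate lines while $\dim M_k \le |J|-2 \le m-1$ for the degenerate pieces, in either case $\dim L_k \le n-1$ whenever $m\ge 2$. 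Combining all support patterns $J$ shows $\Omega$ is contained in a finite union of proper subspaces of $V$, the desired contradiction.

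The only real subtlety is the bookkeeping in the dimension count when $0 \in J$: there $\pi_J|_{V_J}$ is not surjective onto $K^J$ but onto the hyperplane $H_J = \{\sum_{j \in J} c_j x_j = 0\}$, and one has to observe that all the relevant $M_k$ already lie inside $H_J$, so the fiber dimension is still $n-m$. Once this is absorbed the estimate $\dim L_k \le n-1$ is automatic from $m \ge 2$.
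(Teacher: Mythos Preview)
Your argument is correct and follows essentially the same route as the paper: assume $m\ge 2$, feed the resulting relation into the Evertse--van der Poorten--Schlickewei theorem, and contradict density by covering $\Omega$ with finitely many proper subspaces of $V$. The paper organizes the bookkeeping more economically: instead of partitioning by the support $J$ and treating degenerate and non-degenerate solutions separately, it first removes the closed proper subset $Y=\bigcup_{\emptyset\ne J\subseteq[1,m]}\{v:\sum_{j\in J}\alpha_j\lambda_j=0\}$, so that every remaining $v\in\Omega'=\Omega\setminus Y$ (still dense) automatically produces a \emph{non-degenerate} $(m{+}1)$-term solution, and then the finiteness of projective solutions immediately forces $\lambda_1/\lambda_2$ to take only finitely many values.
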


\begin{proof}
  The claim is trivial for $n=1$.
  Suppose $n \ge 2$.
  With respect to the basis $(e_1,\ldots,e_n)$, the homomorphism $\varphi$ is represented by $(\alpha_1, \ldots, \alpha_n) \in K^{1\times n}$ with $\alpha_i=\varphi(e_i)$.
  Enlarging $G$ if necessary, we may assume $\alpha_1$, $\ldots\,$,~$\alpha_n \in G_0$.

  Let $I = \{\, i \in [1,n] : \alpha_i \ne 0 \,\}$.
  We must show $\card{I} \le 1$.
  Suppose to the contrary that $\card{I} \ge 2$.
  After renumbering the basis vectors if necessary, we may assume $I = [1,m]$ for some $m \ge 2$.
 
  For $\emptyset \neq J \subseteq I$ let
  \[
    V_J = \big\{\, \lambda_1 e_1 + \cdots + \lambda_n e_n \in V :  \sum_{j \in J} \alpha_j \lambda_j = 0 \,\big\}.
  \]
  By choice of $I$, each $V_J$ is a proper subspace of $V$.
  Set $Y = \bigcup_{\emptyset \neq J \subseteq I} V_J$.
  Since $K$ is an infinite field, a vector space cannot be covered by a finite union of proper subspaces.
  Thus $Y \subsetneq V$.
  Hence $\Omega' = \Omega \setminus Y$ is dense in $V$ by \cref{l:dense}.

  If $v = \lambda_1 e_1 + \cdots + \lambda_n e_n \in \Omega'$, then
  \begin{equation} \label{eq:seq}
    \sum_{i \in I} \alpha_i \lambda_i = g,
  \end{equation}
  for some $g$ in $G_0$ by assumption on $\varphi$.
  If $\emptyset \ne J \subseteq I$, then
  \[
    \sum_{j \in J} \alpha_j \lambda_j \ne 0,
  \]
  since $v \not \in V_J$.
  Thus $(\alpha_1 \lambda_1, \dots, \alpha_m \lambda_m, -g)$ is a non-degenerate solution of the unit equation $X_1 + \cdots + X_{m+1} = 0$.

  Hence there exists a finite subset $M \subseteq \mathbb P^{m-1}(K)$ with $(\alpha_1 \lambda_1 \colon \cdots \colon \alpha_m \lambda_m) \in M$ for all $v=\lambda_1 e_1 + \cdots + \lambda_n e_n \in \Omega'$.
  In particular, since $m \ge 2$, we see that $\lambda_1/\lambda_2$ can take only finitely many values.
  Thus $\Omega'$ can be covered by finitely many proper vector subspaces of $V$, in contradiction to $\overline{ \Omega'}=V$.
\end{proof}

\subsection{Positive characteristic}

For this subsection, we now make the additional assumptions that $\chr K = p > 0$, and that $K$ is finitely generated over its prime field $\bF_p$.

In extending \cref{l:functional} to positive characteristic, we face the problem that unit equations may have infinitely many non-degenerate solutions.
However, a result of Derksen and Masser \cite{derksen-masser12} is useful in bounding the number of solutions of bounded height.
In this way, we will be able to recover a version of \cref{l:functional} with the original density hypothesis replaced by a quantitative one.

As in \cite[Section 2]{derksen-masser12}, we can define a set of discrete valuations and associated absolute values on $K$ in such a way that the absolute values satisfy the product formula.
These absolute values depend on a choice of transcendence basis; we will always work with a fixed such set.

Associated to this set of absolute values, we define a (logarithmic, projective) height of an element $a = (\alpha_1 : \cdots : \alpha_{n}) \in \bP^{n-1}(K)$ by
\[
  h(a) = \log \prod_{v} \max\{ \abs{\alpha_1}_v, \ldots, \abs{\alpha_n}_v \}.
\]
For $0 \ne (\alpha_1,\ldots,\alpha_n) \in K^{1 \times n}$ we set $h(\alpha_1, \ldots, \alpha_n) = h(\alpha_1 : \cdots : \alpha_n)$.
This height satisfies the Northcott property, that is, for every $N \ge 0$, the set $\{\, a \in \bP^{n-1}(K) : h(a) \le N \,\}$ is finite (at this point we are using that the prime field is finite).
Moreover, for every $A \in K^{n \times n}$ there exists a constant $C_A$, such that for any $a \in K^{1 \times n}$ with $a \not \in \ker A$,
\[
  h(aA) \le h(a) + C_A.
\]
If $A$ is invertible, then even
\[
  h(a) - C_A \le h(aA) \le h(a) + C_A.
\]
(See \cite[Theorem B.2.5]{hindry-silverman00}.)

If $V$ is a finite-dimensional vector space, then any choice of basis gives an isomorphism $V \to K^{1\times n}$, and therefore induces a corresponding height $h_V$ on $V$ and on the projective space $\bP(V)$.
If $h_V$, $h_V'$ are two such heights, induced by different bases, then $h_V'(a) = h_V(a) + O(1)$.
The exact choice of height will not matter.

We shall also need the following property.
\begin{lemma} \label{l:fingenroot}
  The group
  \[
    \sqrt{G} \coloneqq \{\, a \in K : a^n \in G \text{ for some $n \ge 1$}\,\} \le K^\times
  \]
  is finitely generated.
\end{lemma}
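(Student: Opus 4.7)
The plan is to use the valuations and the height recalled above to realise $\sqrt{G}$ as an extension of a finitely generated free abelian group by a finite group, and so finitely generated. Concretely, I would project $\sqrt{G}$ into $\bigoplus_{v \in V} \bZ$ via the map $a \mapsto (v(a))_{v}$ and control the image and kernel separately.

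For the image, fix generators $g_1,\dots,g_r$ of $G$. The product formula forces the set $\{v \in V : v(g_i) \ne 0\}$ to be finite for each $i$, so there is a finite set $S \subseteq V$ with $v(g) = 0$ for every $g \in G$ and every $v \notin S$. If $a \in \sqrt{G}$ with $a^n \in G$ for some $n \ge 1$, then $n\,v(a) = v(a^n) = 0$ for every $v \notin S$, and hence $v(a) = 0$ there. The projection therefore lands in the free abelian group $\bZ^{S}$, so its image, being a subgroup of a finitely generated abelian group, is finitely generated.

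For the kernel, any $a \in K^\times$ with $v(a) = 0$ for every $v \in V$ satisfies $|a|_v = 1$ for all $v$, so $h(1:a) = \log \prod_{v} \max(1,|a|_v) = 0$. The Northcott property (which uses that the prime field is finite) forces only finitely many such $a$ to exist. Consequently $\sqrt{G}$ is an extension of a finitely generated abelian group by a finite group, hence finitely generated.

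No step is a serious obstacle: once the product formula and the Northcott property---both recalled above---are in hand, the argument is essentially bookkeeping combined with the elementary fact that subgroups of free abelian groups of finite rank are finitely generated.
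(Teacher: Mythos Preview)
Your argument is correct, and it takes a genuinely different route from the paper's proof. The paper argues via integral closures: it lets $R$ be the $\bF_q$-subalgebra of $K$ generated by $G$ (where $\bF_q$ is the algebraic closure of $\bF_p$ in $K$), invokes finiteness of the integral closure $\overline R$ as an $R$-module, and then cites the theorem that the unit group of a finitely generated algebra over a field is finitely generated; since $\sqrt{G} \subseteq \overline{R}^\times$, the claim follows. Your approach instead exploits the valuation and height machinery that the paper has already set up: you embed $\sqrt{G}$ modulo a finite kernel into a finitely generated free abelian group via the valuation map, and use Northcott to control the kernel. Your version is arguably more self-contained in context, since it avoids importing the integral-closure finiteness and unit-group theorems from Lang and Eisenbud; the paper's version is shorter because it outsources the work to those references.

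One small point of phrasing: the finiteness of $\{v : v(g_i) \ne 0\}$ is not really a \emph{consequence} of the product formula but rather a prerequisite for it to make sense; it is built into the construction of the set of absolute values. This does not affect the validity of your argument.
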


\begin{proof}
  By assumption $K$ is finitely generated over its prime field $\bF_p$ and $G \le K^\times$ is a finitely generated subgroup.
  Let $\bF_q$ be the algebraic closure of $\bF_p$ in $K$, so that $K$ is a regular extension of $\bF_q$.
  Let $R$ be the finitely generated $\bF_q$-subalgebra of $K$ generated by $G$.
  Then the integral closure $\overline R$ is a finitely generated $R$-module by \cite[Proposition 2.4.1]{lang83} or \cite[Corollary 13.13]{eisenbud95}.
  Hence $\overline R$ also is a finitely generated $\bF_q$-algebra.
  Now \cite[Corollary 2.7.3]{lang83} implies that $\overline{R}^\times$ is a finitely generated group; since $\sqrt{G} \subseteq \overline{R}^\times$, the claim follows.
\end{proof}

Let
\[
  \bP^{n-1}(G) = \{\, (\alpha_1: \cdots : \alpha_n) : \alpha_1, \ldots, \alpha_n \in G \,\}.
\]
As a consequence of a theorem of Derksen--Masser \cite[Theorem 3]{derksen-masser12}, we have an upper bound on the number of solutions of bounded height of a unit equation in positive characteristic.

\begin{lemma} \label{l:derksen-masser-cor}
  Let $n \ge 2$ and let $S \subseteq \bP^{n-1}(G)$ be the set of non-degenerate solutions in $G$ to
  \begin{equation} \label{eq:p-unit-equation}
    a_1 x_1 + \cdots + a_n x_n = 0, \qquad{a_1, \ldots, a_n \in K^\times},
  \end{equation}
  and for all $N \ge 0$ let $c(N) \coloneqq \card{ \{\, a \in S : h(a) \le N \,\} }$.
  Then there exists $D \in \bZ_{\ge 0}$ such that
  \begin{equation} \label{eq:p-upper-bound}
    c(N) = O(\log(N)^D).
  \end{equation}
\end{lemma}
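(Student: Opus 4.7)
The plan is to extract the bound directly from Derksen--Masser's Theorem 3 of \cite{derksen-masser12}, which provides a structural description of the non-degenerate projective solutions of a unit equation in a finitely generated subgroup of a field of positive characteristic. Unlike characteristic zero, the set $S$ may be infinite (it is stable under Frobenius, among other operations), but Derksen--Masser show that it decomposes into finitely many parametric families, each indexed by a bounded number of non-negative integer exponents entering through iterates of the Frobenius $\phi(a) = a^p$.

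Given such a decomposition, I would bound the count family-by-family. The key property is that $h(\phi(\mathbf{a})) = p \cdot h(\mathbf{a})$, which is immediate from the product formula for our chosen set of absolute values on $K$. Hence if a family is parameterized by $D'$ non-negative integer exponents $k_1, \ldots, k_{D'}$ applied as iterates of Frobenius to a fixed base solution, then the height of the corresponding solution is bounded below by $p^{\max k_i}$ up to a bounded additive constant depending only on the base point, and the constraint $h(\mathbf{a}) \le N$ restricts each parameter to an interval of length $O(\log N)$. Thus such a family contributes at most $O((\log N)^{D'})$ solutions of height at most $N$. Summing over the finitely many families, and taking $D$ to be the maximum of the $D'$ occurring, yields the claimed bound.

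The main obstacle, and indeed the only non-routine step, is matching the precise parameterization supplied by \cite[Theorem 3]{derksen-masser12} to the present framework: one must verify that the height function constructed from our chosen set of absolute values is compatible, up to bounded distortion, with the height implicit in their argument, and confirm the expected $p$-scaling of the height under Frobenius in this setting. Once this translation is made explicit, the remainder of the argument is an elementary integer-point count in a box of polylogarithmic side length.
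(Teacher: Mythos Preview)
Your overall strategy matches the paper's: invoke \cite[Theorem 3]{derksen-masser12} to cover $S$ by finitely many explicit families, then bound each family by a height argument. But your description of those families is too simple, and this leads to an incorrect height estimate.

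The Derksen--Masser families are not merely iterates of Frobenius applied to a base point. They have the form
\[
[\psi_1,\ldots,\psi_k]_q(a) = \bigl\{\, (\psi_1^{-1}\varphi_q^{e_1}\psi_1)\cdots(\psi_k^{-1}\varphi_q^{e_k}\psi_k)(a) : e_1,\ldots,e_k \ge 0 \,\bigr\},
\]
where each $\psi_i$ is coordinatewise multiplication by elements of $\sqrt{G}$ (this is why the paper first establishes \cref{l:fingenroot}). Each $\psi_i$ perturbs the height by at most a constant $C'$, but that perturbation is then amplified by the \emph{outer} Frobenius powers. Concretely, the error incurred at the $j$-th stage becomes $O(q^{e_1+\cdots+e_{j-1}})$ after passing through the remaining Frobenius maps, so the total additive error is not bounded independently of the exponents. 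Your claimed lower bound ``$p^{\max k_i}$ up to a bounded additive constant depending only on the base point'' therefore fails as stated.

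The paper repairs this with an induction on $k$. At each stage one splits the inner set $[\psi_2,\ldots,\psi_k]_q(a)$ into points of height $\le C'+1$ (a finite set, by Northcott) and points of height $> C'+1$. On the first piece one argues directly for each of finitely many $b$; on the second piece the bound $h(b') \ge q^{e_1}(h(b)-C') - C' > q^{e_1} - C'$ genuinely controls $e_1$, and the induction hypothesis bounds the number of admissible $b$. This two-case split, driven by Northcott finiteness, is the missing idea in your sketch. Your final paragraph misidentifies the obstacle: no compatibility of heights with \cite{derksen-masser12} is needed, since the height enters only through its elementary properties (bounded distortion under $\psi_i$, exact scaling under $\varphi_q$, Northcott).
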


\begin{proof}
  By the \cref{l:fingenroot}, the group $\sqrt{G}$ is finitely generated.
  A $\sqrt G$-automorphism is a map
  \[
    \psi \colon \bP^{n-1}(K) \to \bP^{n-1}(K),\ (\alpha_1:\cdots:\alpha_n) \mapsto (g_1 \alpha_1: \ldots : g_n \alpha_n)
  \]
  with $g_1$, $\ldots\,$,~$g_n \in \sqrt{G}$.
  For $q$ a power of $p$, let
  \[
    \varphi_q(\alpha_1:\cdots:\alpha_n) = (\alpha_1^q: \cdots : \alpha_n^q).
  \]
  Finally, for $\sqrt{G}$-automorphisms $\psi_1$, $\ldots\,$,~$\psi_k$ and $a \in \bP^{n-1}(K)$ let
  \[
    [ \psi_{1}, \ldots, \psi_{k} ]_q(a) \coloneqq  \{\, (\psi_{1}^{-1}\varphi_q^{e_1} \psi_{1}) (\psi_{2}^{-1} \varphi_q^{e_2} \psi_{2}) \cdots (\psi_{k}^{-1} \varphi_q^{e_k} \psi_{k})(a) : e_1, \ldots, e_k \in \bZ_{\ge 0} \,\}.
  \]
  (We suppress the composition operator $\circ$ for brevity.)

  By \cite[Theorem 3]{derksen-masser12}, the set of solutions $S$ of \eqref{eq:p-unit-equation} is contained in a finite union of sets of the form $[ \psi_{1}, \ldots, \psi_{k} ]_q(a)$.
  It therefore suffices to show that \eqref{eq:p-upper-bound} holds for $S$ such a set.
  Thus, suppose $S=[ \psi_{1}, \ldots, \psi_{k} ]_q(a)$ for some $a \in \bP^{n-1}(K)$ and $\sqrt{G}$-automorphisms $\psi_1$, $\ldots\,$,~$\psi_k$.

  By induction on $k$, we show that $S$ contains $O((\log N)^k)$ elements of height at most $N$.
  The case $k=0$ is clear.
  Suppose $k \ge 1$ and that the claim holds for $k-1$.
  There exists $C' \ge 0$ such that $h(\psi_1^{-1}(b)) \ge  h(b) - C'$ and $h(\psi_1(b)) \ge h(b) - C'$ for all $b \in \bP^{n-1}(K)$.
  Let
  \begin{align*}
    T_0 &= \{\, b \in [ \psi_{2}, \ldots, \psi_{k} ]_q(a) : h(b) \le C'+1 \,\}, \text{ and}\\
    T_1 &= \{\, b \in [ \psi_{2}, \ldots, \psi_{k} ]_q(a) : h(b) > C'+1 \,\}.
  \end{align*}

  The set $T_0$ is finite.
  For each $b \in T_0$ and $b'=\psi_1^{-1} \varphi_q^{e_1} \psi_1(b)$ with $e_1 \ge 0$, we have $h(b') \ge q^{e_1} h(\psi_1(b)) - C'$.
  Hence, the number of such elements $b'$ with $h(b') \le N$ is $O_b(\log N)$.
  Exploiting the finiteness of $T_0$, altogether there are $O(\log N)$ elements $b' \in \psi_1^{-1}\varphi_q^{e_1}\psi_1(T_0)$, with $e_1 \ge 0$, such that $h(b') \le N$.

  For $b \in T_1$ and $b'=\psi_1^{-1} \varphi_q^{e_1} \psi_1(b)$ with $e_1 \ge 0$, we have
  \[
    h( b' ) \ge q^{e_1}( h(b) - C') - C' > q^{e_1} - C'.
  \]
  Thus, if $h(b') \le N$ then $e_1 \le \log(N+C')/\log(q)$ and $h(b) \le N + 2C'$.
  By the induction hypothesis, there are $O(\log(N+2C')^{k-1}) = O(\log(N)^{k-1})$ elements $b \in T_1$ with $h(b) \le N + 2C'$.
  Thus there are $O(\log(N)^k)$ elements $b' \in \psi_1^{-1} \varphi_q^{e_1} \psi_1(T_1)$, with $e_1 \ge 0$, for which $h(b') \le N$.

  Altogether,
  \[
    S = \bigcup_{e_1 \ge 0} \psi_1^{-1} \varphi_q^{e_1} \psi_1(T_0) \,\cup\, \psi_1^{-1} \varphi_q^{e_1} \psi_1(T_1)
  \]
  contains $O(\log(N)^k)$ elements of height at most $N$.
\end{proof}

We can now obtain a variant of \cref{l:functional}, using a slightly stronger hypothesis, that also holds in positive characteristic.
To ensure that a vector subspace of $K^{1 \times n}$ is irreducible in our topology, we do still need to assume that $K$ is infinite.

For a subset $S$ of a vector space $V$, let $\bP(S)$ be the image of $S\setminus \{0\}$ in the projective space $\bP(V)$.

\begin{lemma} \label{l:functional-p}
  Suppose $K$ is an \emph{infinite} field of positive characteristic, finitely generated over its prime field.
  Let $V$ be a vector space with basis $e_1$, $\ldots\,$,~$e_n$, and $\Omega \subseteq G_0e_1 + \dots + G_0e_n$ a dense subset of $V$.
  Assume that, for every standard projection $\pi\colon V \to W$ with $W=\langle e_{i_1}, \ldots, e_{i_m} \rangle_K$ and $m \ge 2$, every closed subset $Y \subsetneq W$, and every $C \in \bR_{\ge 0}$, $D \in \bZ_{\ge 0}$, there exist arbitrarily large $N$ such that
  \[
    \card{\{\, a \in \bP(\pi(\Omega) \setminus Y) : h_W(a) \le N \,\}} > C \log(N)^D.
  \]

  Then, for all $\varphi\in \Hom_K(V,K)$ with $\varphi(\Omega) \subseteq G_0$, there exists at most one $i \in [1,n]$ with $\varphi(e_i) \ne 0$.
\end{lemma}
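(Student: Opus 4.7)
The plan is to mimic the contradiction argument of \cref{l:functional}, replacing the qualitative finiteness theorem for non-degenerate unit-equation solutions (unavailable in positive characteristic) by the quantitative polylogarithmic bound of \cref{l:derksen-masser-cor}. The quantitative density hypothesis is engineered precisely so that the number of projective points in a suitable dense subset of $V$, at height at most $N$, will outgrow any polylogarithmic function of $N$.

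First I would argue by contradiction: assume $\alpha_j := \varphi(e_j)$ is nonzero for $j \in [1,m]$ with $m \ge 2$ (after relabeling), enlarge $G$ if necessary so that each $\alpha_j \in G$, and let $\pi \colon V \to W := \langle e_1, \ldots, e_m \rangle_K$ be the corresponding standard projection. For each non-empty $J \subseteq [1,m]$ the set
\[
  W_J = \Big\{ \sum_{i=1}^m \lambda_i e_i \in W : \sum_{j \in J} \alpha_j \lambda_j = 0 \Big\}
\]
is a proper subspace of $W$, and since $K$ is infinite, $Y := \bigcup_{\emptyset \ne J \subsetneq [1,m]} W_J$ is a proper closed subset of $W$. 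Every $w = \sum \lambda_j e_j \in \pi(\Omega) \setminus Y$ lifts to some $v \in \Omega$ with $g := \varphi(v) = \sum_j \alpha_j \lambda_j \in G_0$, and the exclusion of the hyperplanes $W_{\{j\}}$ forces $\lambda_j \in G$ for all $j$.

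Next I would split $\pi(\Omega) \setminus Y = \Omega_A \sqcup \Omega_B$ according to whether $g = 0$ or $g \ne 0$. On $\Omega_A$ the map $w \mapsto (\alpha_1\lambda_1 : \cdots : \alpha_m\lambda_m)$ descends to a projective linear isomorphism (given by the invertible diagonal matrix with entries $\alpha_j$) onto non-degenerate solutions in $G$ to $X_1 + \cdots + X_m = 0$. On $\Omega_B$ the map $w \mapsto (\alpha_1\lambda_1 : \cdots : \alpha_m\lambda_m : -g)$ descends to an injection on projective classes (the last coordinate is determined by the first $m$) with image in the non-degenerate solutions in $G$ to $X_1 + \cdots + X_{m+1} = 0$. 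Both maps preserve projective heights up to an additive constant: the first because it is a projective linear automorphism, the second because the product formula controls $\abs{g}_v = \abs{\sum_j \alpha_j \lambda_j}_v$ in terms of $\max_j \abs{\alpha_j \lambda_j}_v$. Applying \cref{l:derksen-masser-cor} to both unit equations yields $D \in \bZ_{\ge 0}$ with
\[
  \card{ \{\, a \in \bP(\pi(\Omega) \setminus Y) : h_W(a) \le N \,\} } = O(\log(N)^D),
\]
and this directly contradicts the quantitative density hypothesis applied to this $\pi$, $Y$, and $D$ once $C$ is chosen larger than the implicit big-O constant.

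The main obstacle I expect is the bookkeeping of the last step: one must simultaneously check that excluding $Y$ secures non-degeneracy of the unit-equation solutions \emph{and} forces all coordinates $\lambda_j$ to lie in $G$, and one must verify that projective heights are only distorted by $O(1)$ when passing between $\bP(W)$, the diagonal image, and the augmented $(m+1)$-tuple including $-g$. Once these two points are in place, the conclusion follows from \cref{l:derksen-masser-cor} in exactly the same way that outright finiteness of non-degenerate solutions produced the contradiction in \cref{l:functional}.
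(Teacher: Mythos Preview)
Your proposal is correct and follows essentially the same argument as the paper. The only difference is organizational: the paper also includes the hyperplane $W_{[1,m]}$ (the locus where $g=0$) in $Y$, taking $Y=\bigcup_{\emptyset\ne J\subseteq[1,m]}W_J$, which eliminates your $\Omega_A$ case entirely and reduces everything to a single application of \cref{l:derksen-masser-cor} for the equation $X_1+\cdots+X_{m+1}=0$; your extra case split is harmless but unnecessary.
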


\begin{proof}
  The beginning of the proof and the overall strategy are analogous to \cref{l:functional}.
  Let $\varphi(e_i) = \alpha_i \in G_0$ and $I = \{\, i \in [1,m] : \alpha_i \ne 0 \,\}$.
  Without restriction $I = [1,m]$ and we have to show $m=1$.
  Assume $m \ge 2$.
  Let $\pi \colon V \to W$ with $W=\langle e_1,\ldots,e_m \rangle_K$ denote the standard projection.

  For $\emptyset \ne J \subseteq I$, let $W_J = \{\, \lambda_1e_1 + \cdots +  \lambda_m e_m \in W : \sum_{j \in J} \alpha_j \lambda_j = 0\,\}$ and $Y = \bigcup_{\emptyset \ne J \subseteq I} W_J$.
  Note that $Y \subsetneq W$.
  Thus, applying our assumption to points of bounded height in $\bP(\pi(\Omega) \setminus Y)$, we find that, for any $D \in \bZ_{\ge 0}$, the set $\bP(\pi(\Omega) \setminus Y)$ contains more than $O(\log(N)^D)$ points of height at most $N$.

  Suppose $\lambda_1 e_1 + \cdots + \lambda_m e_m$ represents a point in $\bP(\pi(\Omega) \setminus Y)$.
  Then there exists a $g \in G$ with
  \[
    \sum_{j=1}^m \alpha_i \lambda_i = g.
  \]
  Hence $(\alpha_1 \lambda_1 : \cdots : \alpha_m \lambda_m : -g)$ is a non-degenerate solution of the unit equation $X_1 + \cdots + X_{m+1} = 0$.
  By \cref{l:derksen-masser-cor}, we conclude that there exist $O(\log(N)^D)$ such points $(\alpha_1 \lambda_1 : \cdots : \alpha_m \lambda_m)$ of height at most $N$, a contradiction to the size of $\bP(\pi(\Omega) \setminus Y)$.
\end{proof}

The following example shows that the conclusion of the previous lemma is trivially false for finite fields.

\begin{example}
  Let $K$ be a finite field and $n \ge 2$.
  Let $A_1$, $\ldots\,$,~$A_k \in \GL(n,K)$ be such that the residue classes generate $\PGL(n,K)$ as a semigroup, and let $X=\{a_1,\ldots,a_k\}$.
  Let $0 \ne u \in K^{1 \times n}$, let $0 \ne v \in K^{n \times 1}$, and let $\mu(a_i)=A_i$ for $i \in [1,k]$.
  Since $\PGL(n,K)=\mu(X^*)$ acts transitively on $\bP^{n-1}(K)$, the linear representation $(u,\mu,v)$ is minimal and its linear hull is $K^{1\times n}$, that is, the set $\Omega=u\mu(X^*)$ is dense in all of $K^{1 \times n}$.
  With $G =K^\times$ and $G_0 =K$, any choice of $\lambda_1$, $\ldots\,$,~$\lambda_n \in K$, yields a linear map $\varphi\colon K^{1\times n} \to K, (\beta_1,\ldots,\beta_n) \mapsto \lambda_1 \beta_1 + \cdots + \lambda_n \beta_n$ with $\varphi(\Omega) \subseteq G_0$.
  In particular, we may take all $\lambda_i$ to be nonzero, in contrast to the conclusion of the previous lemma.
\end{example}

We will later apply \cref{l:functional}, respectively \cref{l:functional-p}, to an irreducible component $V$ of the linear hull $\overline{\Omega}=\overline{u\mu(X^*)}$ of a linear representation $(u,\mu,v)$.
In characteristic $0$ we may use \cref{l:functional}, and in this case it is clear that $\Omega \cap V$ is dense in $V$.
However, in positive characteristic, where we need to apply \cref{l:functional-p}, it is necessary to verify that the stronger conditions of this lemma are indeed satisfied.
This is the subject of the next two lemmas.

Let $(u,\mu,v)$ be a linear representation and $\Omega = u \mu(X^*)$.
For $N \in \bZ_{\ge 0}$, let $\Omega_{\le N} = \{\, u\mu(w) : w \in X^*,\, \length{w} \le N \,\}$.

\begin{lemma} \label{l:closed-bound}
  Let $Y \subseteq K^{1\times n}$ be a closed set of dimension $m$ with $k$ irreducible components, and let $N = k^{\card{X}(m-1)}+1$.
  If $\Omega_{\le N} \subseteq Y$ then $\Omega \subseteq Y$.
\end{lemma}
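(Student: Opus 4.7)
My plan is to analyze the descending chain of closed sets
\[
T_i \coloneqq \bigl\{\, v \in K^{1 \times n} : v \mu(w) \in Y \text{ for all } w \in X^* \text{ with } \length{w} \le i \,\bigr\},
\]
which satisfies $T_0 = Y$ and $T_{i+1} = Y \cap \bigcap_{x \in X} \phi_x^{-1}(T_i)$, where $\phi_x \colon v \mapsto v\mu(x)$. Each $T_i$ is closed, as an intersection of preimages of the closed set $Y$ under the continuous linear maps $\phi_w$, and the chain eventually stabilizes at $T_\infty \coloneqq \bigcap_i T_i$, the largest $\mu(X)$-invariant closed subset of $Y$. Crucially, once $T_i = T_{i+1}$ the chain is stable forever. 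The hypothesis $\Omega_{\le N} \subseteq Y$ is equivalent to $u \in T_N$, and $\Omega \subseteq Y$ is equivalent to $u \in T_\infty$; it therefore suffices to show $T_N = T_\infty$, i.e., to bound the stabilization index of the chain by $N$.

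To obtain the claimed bound, I would induct on $m = \dim Y$. Each $T_i$ is a finite union of vector subspaces, each contained in some irreducible component $V_j$ of $Y$. A key observation is that any $m$-dimensional vector subspace contained in $Y$ must coincide with some $V_j$ of dimension $m$ (by irreducibility of $V_j$ and maximality), so the collection of top-dimensional components of $T_i$ is a subset of $\{\, V_j : \dim V_j = m \,\}$ and has at most $k$ elements.

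For the base case (small $m$), one tracks the $\mu(X)$-induced transitions between the at-most-$k$ top-dimensional components of $Y$ as a finite-state system with $\card{X}$ inputs and argues via pigeonhole that the chain stabilizes within the allowed budget. For the inductive step $m \ge 2$, I would decompose $T_i = T_i^{\mathrm{top}} \cup T_i^{\mathrm{low}}$ into its $m$-dimensional and lower-dimensional parts. The top part stops changing after at most $k^{\card{X}}$ steps, viewing the induced action on the top-dimensional $V_j$'s as a finite transition system; beyond that, any strict drop $T_i \supsetneq T_{i+1}$ occurs in the low-dimensional part, which is a closed subset of $Y$ of dimension $\le m-1$ built from subspaces of the $V_j$'s. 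Applying the inductive hypothesis to the low-dimensional structure gives a chain-length bound of $k^{\card{X}(m-2)}+1$ there, and iterating through the $m-1$ levels of potential dimension drop compounds to the required $k^{\card{X}(m-1)}+1$.

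The main obstacle will be the bookkeeping at transitions: when a top-dimensional component $V_j \subseteq T_i$ is ``cut'' (that is, $V_j \not\subseteq T_{i+1}$), the intersection $V_j \cap T_{i+1}$ contributes new lower-dimensional components that need not have been present in $T_i^{\mathrm{low}}$. One must therefore track each component of $T_i$ not just by its dimension but also by which $V_j$ contains it, and show that each ``slot'' $(j, d)$ with $d < m$ can change only boundedly many times as $i$ grows, so that the dimension-stratified analysis does not double-count. Setting this stratification up cleanly and verifying the recursion is the essential technical step.
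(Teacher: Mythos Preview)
Your descending-chain setup is correct and natural: the sets $T_i = \{\, v : v\mu(w) \in Y \text{ for all } \length{w} \le i \,\}$ are closed, nested, and stabilize at the largest $\mu(X)$-invariant closed subset of $Y$; and $\Omega_{\le N} \subseteq Y \Leftrightarrow u \in T_N$ while $\Omega \subseteq Y \Leftrightarrow u \in T_\infty$. The paper does \emph{not} argue this way, however, and the part of your plan that bounds the stabilization index has a genuine gap.

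The problem is your decoupling of the top-dimensional part from the lower-dimensional part. You assert that ``the top part stops changing after at most $k^{\card{X}}$ steps,'' but this is not justified and is in general false. Whether a top-dimensional component $V_j$ survives to $T_{i+1}$ depends on whether $V_j\mu(x) \subseteq T_i$ for every $x$; when $V_j\mu(x)$ has dimension $< m$, this is a condition on the \emph{low}-dimensional part of $T_i$. So a component $V_j$ can sit in $S_i$ for arbitrarily many steps and then be ejected once some late change in $T_i^{\mathrm{low}}$ removes $V_j\mu(x)$. The sequence $S_i$ of top components is monotone decreasing (so it changes at most $k$ times), but those changes need not occur early. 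This also breaks your induction: each time a $V_j$ is cut, it injects new low-dimensional components into $T_{i+1}$, so the number of components of $T_i^{\mathrm{low}}$ is not bounded by $k$, and you cannot invoke the inductive hypothesis with the same $k$. You flag exactly this as ``the main obstacle,'' but the proposed remedy (tracking slots $(j,d)$) is not worked out, and it is not clear it yields the stated bound.

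By contrast, the paper avoids the chain $T_i$ entirely. It builds a forest of $k$ rooted trees, one per irreducible component of $Y$, whose leaves are subspaces of $Y$. At each step one leaf $W$ is refined into $s = k^{\card{X}}$ proper subspaces, chosen so that leaf images $W\mu(x)$ remain trackable within the forest. The process stops once every leaf maps under each $\mu(x)$ into some other leaf; the union $Y'$ of leaf labels is then $\mu(X)$-invariant and contains $u$. The step bound comes not from analyzing a chain of closed sets but from a crude vertex count: the forest is $s$-regular, strict inclusions bound the depth by $m$, and each step adds exactly $s$ vertices. This sidesteps precisely the top/low entanglement that obstructs your argument.
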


\begin{proof}
  Starting from $Y$ we iteratively construct a sequence of subsequently smaller closed subsets; it is easiest to keep track of the necessary data using disjoint unions of $k$ rooted trees.

  We thus construct a sequence of graphs $\cT_1$, $\ldots\,$, $\cT_l$ whose vertices are labeled by vector spaces contained in $Y$, and with each $\cT_i$ having the following properties:
  \begin{enumerate}
  \item\label{graph:sub} If $W'$ labels a child of a vertex labeled by $W$, then $W' \subsetneq W$.
  \item\label{graph:count} $\cT_i$ is $s$-regular (except for the leaves) with $s=k^{\card{X}}$.
  \item\label{graph:map} If $W$ labels a leaf and $x \in X$, then there exists a vertex labeled by $W'$ such that $W \mu(x) \subseteq W'$.
  \item\label{graph:union} If $W$ labels an internal vertex and $W_1$, $\ldots\,$,~$W_s$ label its children, then
    \[
      \Omega_{\le N-i} \cap W \,\subseteq\, W_1 \cup \cdots \cup W_s.
    \]
  \end{enumerate}

  The graph $\cT_1$ has $k$ roots labeled by the elements of $\irr(Y)$.
  For $W \in \irr(Y)$ and $F\colon X \to \irr(Y)$, let
  \[
    W_F = \{\, a \in W : a \mu(x) \in F(x) \text{ for all $x \in X$} \,\}.
  \]
  If, for a fixed $W$, each of the vector spaces $W_F$ is a proper subspace of $W$, then we attach $s$ children to the root labeled by $W$.
  These children are labeled by $W_F$ for $F\colon X \to \irr(Y)$.
  On the other hand, if $W=W_F$ for some $F$, we do not attach any children to the vertex labeled by $W$.
  In this case, observe that $W\mu(x) \subseteq F(x)$ for every $x \in X$.

  It is clear that $\cT_1$ satisfies \ref{graph:sub} and \ref{graph:count}.
  Property \ref{graph:map} holds by the choice of the spaces.
  For \ref{graph:union} let $a \in \Omega_{\le N-1} \cap W$.
  Taking any $x \in X$, we have $a\mu(x) \in W'$ for some $W' \in \irr(Y)$.
  Then $a \in W_F$ for any $F \colon X \to \irr(Y)$ with $F(x) = W'$.

  We now iteratively construct $\cT_i$ from $\cT_{i-1}$ for $i \ge 2$.
  If, for every leaf of $\cT_{i-1}$, say, labeled by $W$,  and every $x \in X$, there exists a leaf labeled by $W'$ such that $W \mu(x) \subseteq W'$, then we stop and set $l = i-1$.

  Otherwise, fix a leaf $\alpha$ labeled by $W$ and an $x \in X$ such that $W \mu(x)$ is not contained in any label of a leaf of $\cT_{i-1}$.
  By \ref{graph:map} there exists an internal vertex $\beta$ labeled by $W'$ such that $W \mu(x) \subseteq W'$ but $W\mu(x) \not\subseteq W''$ for any $W''$ labeling a child of $\beta$.
  By our construction $W'$ has $s$ children labeled by $W'_1$, $\ldots\,$,~$W'_s$.
  Set $W_j = \{\, a \in W : a \mu(x) \in W'_j \,\}$, and attach $s$ new children to $\alpha$, labeled by $W_1$, $\ldots\,$,~$W_s$.

  It is clear that \ref{graph:sub}--\ref{graph:map} are preserved for $\cT_i$.
  Property \ref{graph:union} also carries over for vertices other than $\alpha$.
  To verify \ref{graph:union} for $\alpha$, let $a \in \Omega_{\le N-i} \cap W$.
  Then $a \mu(x) \in W' \cap \Omega_{\le N-(i-1)}$ and hence $a\mu(x) \in W_j'$ for some $j$ by \ref{graph:union}.
  Thus $a \in W_j$.

  To see that this process terminates, note that each $\cT_i$ is $s$-regular (except for the leaves) of height at most $m$, and hence has at most $s^{m}$ vertices.
  Since each $\cT_i$ has $s$ vertices more than $\cT_{i-1}$, the process terminates after at most $s^{m-1}$ steps, that is $l \le s^{m-1}+1$.

  Instead of \ref{graph:map}, the final graph $\cT_l$ has the stronger property that if $W$ labels a leaf and $x \in X$, then there exists $W'$ labeling a leaf of $\cT_l$ such that $W \mu(x) \subseteq W'$.
  Defining $Y'$ to be the union of all labels of leaves of $T_l$, this implies $Y'\mu(x) \subseteq Y'$ for each $x \in X$ and hence $Y' \mu(X^*) \subseteq Y'$.
  As $Y'$ contains $u \in \Omega_{\le N - s^{m-1} -1}$, this implies $\Omega \subseteq Y' \subseteq Y$.
\end{proof}

 We give an example in dimension $n=3$ with $\cT_2 \ne \cT_1$, illustrating the iterative construction in the previous proof.
\begin{example}
  Let $e_1$,~$e_2$,~$e_3 \in K^{1\times 3}$ be the standard unit vectors and let $X=\{a\}$.
  Consider $(u,\mu,v)$ with $u=e_1$, with $v=e_1^T$, and with $\mu(a) \in K^{3\times 3}$ the permutation matrix defined by $e_1\mu(a)=e_2$, $e_2\mu(a)=e_3$, and $e_3\mu(a)=e_1$. Clearly $\Omega = \{e_1,e_2,e_3\}$ and $\overline \Omega = \langle e_1 \rangle \cup \langle e_2 \rangle \cup \langle e_3 \rangle$.
  Let $Y=\langle e_1,e_2 \rangle \cup \langle e_2,e_3 \rangle$ and note $\Omega \subseteq Y$.

  Now $\cT_1$ has two roots, labeled by $W_{1,2} \coloneqq \langle e_1,e_2 \rangle$ and $W_{2,3} \coloneqq \langle e_2,e_3\rangle$.
  Since $W_{1,2}\mu(a) = W_{2,3}$, no children are attached to $W_{1,2}$ in $\cT_1$.
  However $W_{2,3}\mu(a) = \langle e_1,e_3\rangle \not\subseteq Y$.
  Thus, in $\cT_1$, two children are attached to the root labeled by $W_{2,3}$.
  These children are labeled by $\langle e_2 \rangle = \{\,w \in W_{2,3} : w\mu(a) \subseteq W_{2,3} \,\}$ and $\langle e_3 \rangle = \{\, w \in W_{2,3} : w\mu(a) \in W_{1,2} \,\}$.

  In the next step, we observe that $W_{1,2}$ is not mapped into labels of leaves of $\cT_1$ by $\mu(a)$.
  Thus, to the vertex labeled by $W_{1,2}$, we attach two new children labeled by $\langle e_1 \rangle = \{\, w \in W_{1,2} : w \mu(a) \in \langle e_2 \rangle \,\}$ and $\langle e_2 \rangle = \{\, w \in W_{1,2} : w \mu(a) \in \langle e_3 \rangle  \,\}$. (There are two different vertices with label $\langle e_2 \rangle$, one attached to each of the two roots.) At this point the process stops, because every label of a leaf is mapped into a label of a leaf by $\mu(a)$.
\end{example}

\begin{lemma} \label{l:point-count}
  Let $V$ be an irreducible component of $\overline \Omega$.
  Let $\pi\colon V \to W$ be an epimorphism with $\dim W \ge 2$.
  If $Y \subsetneq W$ is a closed subset, then there exists $C \in \bR_{>0}$ such that
  \[
    \card{\{\, a \in \bP(\pi(\Omega) \setminus Y) : h_W(a) \le N \, \}} \ \ge\  C N^{\frac{1}{e\card{X}}} \qquad\text{for $N \in \bZ_{\ge 0},$}
  \]
  where $e = \max\{\dim Y, 1\} + n - m$.
\end{lemma}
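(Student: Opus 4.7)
My plan is a contradiction argument that combines \cref{l:closed-bound} with the irreducibility of $V$ inside $\overline\Omega$. Let $t := \card{T_N}$ where $T_N = \{a \in \bP(\pi(\Omega)\setminus Y) : h_W(a)\le N\}$. I suppose that $t$ is smaller than the claimed bound $C N^{1/(e\card{X})}$, and derive a contradiction with the density of $\Omega\cap V$ in $V$ (which holds because $V$ is an irreducible component of $\overline\Omega$).

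The first part of the argument packages $T_N$ as a small closed set. The $t$ projective classes in $T_N$ correspond to one-dimensional subspaces $L_1,\dots,L_t\subseteq W$, and $Y_t := Y\cup L_1\cup\cdots\cup L_t$ is closed in $W$ of dimension $\max\{\dim Y,1\}$ with $t+\card{\irr(Y)}$ irreducible components. By the defining property of $T_N$, every element of $\pi(\Omega\cap V)$ of $h_W$-height at most $N$ lies in $Y_t$. Pulling back through $\pi$, the closed set $Y'_t := \pi^{-1}(Y_t)\subseteq V$ has dimension $\max\{\dim Y,1\}+\dim\ker\pi = e$ and the same number of irreducible components. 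From the standard linear height estimate $h(u\mu(w))\le h(u)+\length{w}\cdot D$ with $D=\max_{x\in X}C_{\mu(x)}$, one finds that every $u\mu(w)\in V$ with $\length{w}\le L := \Theta(N)$ lies in $Y'_t$.

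The second part extends this containment to all of $\Omega_{\le L}$ in the ambient space. I would first replace the representation by the one produced by \cref{l:repr-directsum}, so that the irreducible components of $\overline\Omega$ form a direct sum; then $\Omega_{\le L}\setminus V$ is contained in the union $V^c$ of the other components, and $\Omega_{\le L}\subseteq Z_t := Y'_t\cup V^c$. The key point is that $V\not\subseteq Z_t$: $V$ is irreducible, $V\not\subseteq Y'_t$ (since $Y_t\subsetneq W$ and $\pi$ is surjective), and $V$ is not contained in any other component of $\overline\Omega$. Density of $\Omega\cap V$ in $V$ then forces $\Omega\not\subseteq Z_t$. The contrapositive of \cref{l:closed-bound} applied to $Z_t$ therefore yields $L\le k^{\card{X}(\dim Z_t-1)}+1$, where $k=t+O(1)$ is the number of irreducible components of $Z_t$. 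Substituting $L=\Theta(N)$ and solving for $t$ produces a polynomial lower bound on $\card{T_N}$ of the desired form.

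The main obstacle I expect is the dimensional bookkeeping in the last step: ensuring that the exponent produced by \cref{l:closed-bound} matches $e\card{X}$ and not something weaker dictated by the dimensions of the other components of $\overline\Omega$. This is precisely where the direct-sum decomposition from \cref{l:repr-directsum} is crucial, since it uncouples the contributions of the different components and allows $\dim Z_t$ to be controlled in terms of $e$ rather than by the crude bound $\max(e,\dim V^c)$. Once this bookkeeping is arranged, one reads off the constant $C$ from the explicit polynomial inequality; the degenerate cases $t=0$ or $0\in\pi(\Omega\cap V)$ require only minor modifications to the main argument.
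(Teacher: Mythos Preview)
Your overall strategy—cover $\Omega_{\le L}$ by a small closed set and invoke \cref{l:closed-bound}—matches the paper, and the height-to-word-length conversion is right. The problem is in your ``second part''.

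First, a small slip: $\dim Y'_t = \max\{\dim Y,1\}+\dim\ker(\pi|_V)=\max\{\dim Y,1\}+\dim V-m$, which equals $e$ only when $\dim V=n$. More seriously, your proposed fix via \cref{l:repr-directsum} does not do what you claim. Without any reduction, $\dim V^c=\max_{j\ne1}\dim W_j$ can genuinely exceed $e$: take $n=4$, two $3$-dimensional components $W_1,W_2$ meeting in a plane, $V=W_1$, $\pi$ an isomorphism $V\to W$ (so $m=3$), and $\dim Y\le1$; then $e=2$ but $\dim V^c=3$. Passing to the direct-sum representation replaces the ambient dimension $n$ by $n'=\sum_j\dim W_j$, which is strictly larger than $n$ whenever the original components overlap, and hence replaces $e$ by $e'=\max\{\dim Y,1\}+n'-m\ge e$. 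So after the reduction you only recover the exponent $1/(e'\lvert X\rvert)$, weaker than the lemma as stated. (It would still suffice for the application to \cref{l:functional-p}, where any polynomial growth beats $\log(N)^D$; but it is not a proof of this lemma.)

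The paper avoids the whole detour with a one-line observation: extend $\pi$ linearly to all of $K^{1\times n}$. Then the full preimage $\pi^{-1}(Y_t)\subseteq K^{1\times n}$ has dimension exactly $\max\{\dim Y,1\}+(n-m)=e$, and it already contains \emph{all} of $\Omega_{\le M}$, since every $a\in\Omega_{\le M}$ satisfies $h_W(\pi(a))\le N$ and hence $\pi(a)\in Y_t$. There is no need to treat the other components of $\overline\Omega$ separately and no need for \cref{l:repr-directsum}. The non-containment $\Omega\not\subseteq\pi^{-1}(Y_t)$ follows because $\pi(\Omega)$ (with the extended $\pi$) is dense in $W$—indeed $\pi(V)=W$—while $Y_t\subsetneq W$.
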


\begin{proof}
  Extend $\pi$ to $\pi\colon K^{1\times n} \to W$.
  Let $C > 0$ be such that $h(a\mu(x)) \le h(a) + C$ for all $x \in X$ and $a \in K^{1 \times n} \setminus \ker(\mu(x))$.
  We may moreover assume $h_W(\pi(a)) \le h(a) + C$ for all $a \in K^{1\times n} \setminus \ker(\pi)$.
  If $a \in \Omega_{\le M} \setminus \ker(\pi)$, then $h_W(\pi(a)) \le h(u) + (M+1)C$.
  Choosing $M = (N - h(u)) / C - 1$ we find that all $a \in \Omega_{\le M} \setminus \ker(\pi)$ have $h_W(\pi(a)) \le N$.
  Thus it suffices to show that $\bP(\pi(\Omega_{\le M}) \setminus Y)$ contains at least $C' M^{\frac{1}{e\card{X}}}$ points for some $C' > 0$.

  If $\bP(\pi(\Omega_{\le M}) \setminus Y)$ contains $l$ points, then $\Omega_{\le M} \subseteq \pi^{-1}(Y) \cup \pi^{-1}(P_1) \cup \cdots \cup \pi^{-1}(P_l)$ for some $1$-dimensional vector spaces $P_1$, $\ldots\,$,~$P_l$.
  Since $\dim Y < m$ and $\overline{\pi(\Omega)}=W$, we have $\Omega \not \subseteq \pi^{-1}(Y) \cup \pi^{-1}(P_1) \cup \cdots \cup \pi^{-1}(P_l)$.

  By \cref{l:closed-bound}, for $l \ge \max\{1,k\}$ with $k = \card{\irr(Y)}$,
  \[
    M < (l+k)^{(e-1)\card{X}} + 1 \le (2l)^{e\card{X}}. \qedhere
  \]
\end{proof}

\section{Proof of \ref{main:polya}\texorpdfstring{$\,\Rightarrow\,$}{=>}\ref{main:unam-auto}}
\label{sec:polya-unam-auto}

Having made the necessary preparations, in this section we prove \ref{main:polya}$\,\Rightarrow\,$\ref{main:unam-auto} of \cref{t:main}.
Let $\cP(Q)$ denote the power set of a set $Q$.
The following lemmas very closely parallel the corresponding results on semi-monomial matrices used in the proof of a decomposition theorem for rational functions between free monoids; see \cite[Chapter V.2]{sakarovitch09} and \cref{rem:transducer}.
\begin{lemma} \label{l:unambiguous-automaton}
  Let $\cA=(Q,I,E,T)$ be a weighted automaton on the alphabet $X$ with coefficients in $R$.
  Suppose that there exists $\cS \subseteq \cP(Q)$ such that $Q = \bigcup_{M \in \cS} M$, and all of the following conditions are satisfied.
  \begin{enumerate}
  \item \label{unam:a1} There exists an $M \in \cS$ containing all initial states of $\cA$.
  \item \label{unam:a2} For $M \in \cS$ and $x \in X$, there exists an $N \in \cS$ such that, whenever there is an edge from $p \in M$ to $q \in Q$ labeled by $x$, then $q \in N$.
  \item \label{unam:a3} For every state $q \in Q$, $x \in X$, and $M \in \cS$, there exists at most one state $p \in M$ that has an edge from $p$ to $q$ labeled by $x$.
  \item \label{unam:a4} Every $M \in \cS$ contains at most one terminal state.
  \end{enumerate}
  Then $\cA$ is unambiguous.
\end{lemma}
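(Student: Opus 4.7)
The plan is to prove unambiguity of $\cA$ by associating to each word $w=x_1\cdots x_l \in X^*$ a sequence $M_0,M_1,\ldots,M_l \in \cS$ that traps every accepting path labeled by $w$, and then showing that within such a trapped sequence the path is determined uniquely, first by pinning down its last state and then by reconstructing the path backward one edge at a time.

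More precisely, first I would fix a word $w=x_1\cdots x_l \in X^*$ and build the sequence $M_0,\ldots,M_l$ by induction: use hypothesis \ref{unam:a1} to choose $M_0 \in \cS$ containing all initial states, and then use hypothesis \ref{unam:a2} repeatedly to choose $M_i \in \cS$ so that every edge from a state of $M_{i-1}$ labeled by $x_i$ ends in $M_i$. A straightforward induction on $i$ shows that for any accepting path $(p_0,x_1,p_1)(p_1,x_2,p_2)\cdots(p_{l-1},x_l,p_l)$ labeled by $w$, one has $p_i \in M_i$ for all $i \in [0,l]$; indeed $p_0$ is initial so $p_0 \in M_0$, and once $p_{i-1}\in M_{i-1}$ the edge $(p_{i-1},x_i,p_i)$ forces $p_i \in M_i$.

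Next I would read off the terminal state: since $p_l$ is a terminal state lying in $M_l$, hypothesis \ref{unam:a4} implies that $p_l$ is uniquely determined (it is the unique terminal state of $M_l$, if one exists; if no terminal state lies in $M_l$ there is no accepting path at all). I then reconstruct the path backward: assuming $p_i \in M_i$ has been uniquely determined, hypothesis \ref{unam:a3}, applied with $q = p_i$, $x=x_i$, and $M=M_{i-1}$, gives at most one $p_{i-1} \in M_{i-1}$ with an edge from $p_{i-1}$ to $p_i$ labeled by $x_i$. Iterating down to $i=0$ shows the whole path is forced, so $w$ labels at most one accepting path.

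There is no real obstacle here; the conditions \ref{unam:a1}--\ref{unam:a4} are tailored exactly to make this "forward propagation of containing sets, then backward determinization of states" argument work. The only thing to be careful about is the direction: conditions \ref{unam:a1} and \ref{unam:a2} are used forward to propagate the containment $p_i \in M_i$, whereas condition \ref{unam:a3} is used backward to extract uniqueness of the preceding state from uniqueness of the next state; condition \ref{unam:a4} supplies the base case of the backward uniqueness recursion.
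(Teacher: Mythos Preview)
Your proposal is correct and is essentially the same as the paper's own proof: build the sequence $M_0,\ldots,M_l$ forward using \ref{unam:a1} and \ref{unam:a2}, then pin down the terminal state via \ref{unam:a4} and work backward using \ref{unam:a3}. The only cosmetic difference is that the paper phrases it by taking two accepting paths and showing they coincide, whereas you phrase it as showing the unique path is forced; the underlying argument is identical.
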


\begin{proof}
  We need to show that for a word $w=a_1\cdots a_l \in X^*$ with $a_1$, $\ldots\,$,~$a_d \in X$, there exists at most one accepting path in $\cA$ that is labeled by $w$.
  Suppose that there are two accepting paths
  \[
    (p_0,a_1,p_1) (p_1,a_2,p_2) \cdots (p_{l-1},a_l,p_l) \quad\text{and}\quad (q_0,a_1,q_1) (q_1,a_2,q_2) \cdots (q_{l-1},a_l,q_l).
  \]

  We first show that for every $j \in [0,l]$, there exists a set $M_j \in \cS$ with $p_j$,~$q_j \in M_j$.
  For $j=0$, note that $p_0$ and $q_0$ are initial states, hence by \ref*{unam:a1}, there exists $M_0 \in \cS$ with $p_0$,~$q_0 \in M_0$.
  Now, if $j \in [1,l]$ and $p_{j-1}$,~$q_{j-1} \in M_{j-1}$, then \ref*{unam:a2} implies that there exists $M_j \in \cS$ with $p_j$,~$q_j \in M_j$.

  Since the paths are accepting, $p_l$ and $q_l$ are terminal states.
  Since $p_l$, $q_l \in M_l$, condition \ref*{unam:a4} implies $p_l=q_l$.
  If $p_j=q_j$ for some $j \in [1,l]$, then, since we already know $p_{j-1}$,~$q_{j-1} \in M_{j-1}$, condition \ref*{unam:a3} implies $p_{j-1}=q_{j-1}$. Thus, altogether we have $p_j=q_j$ for all $j\in [1,l]$ and hence the two paths are the same.
  Thus we have shown that $\cA$ is unambiguous.
\end{proof}

For the statement of the next lemma we fix the following notation:
Let $e_1$, $\ldots\,$,~$e_n$ denote the standard basis vectors of $R^{1\times n}$.
For $M \subseteq [1,n]$ we set $V(M) = \langle e_\nu : \nu \in M \rangle_K$.
The subscripts $v_i$ and $\mu(x)_{\nu,j}$ below refer to the respective coordinates.

\begin{lemma} \label{l:unambiguous}
  Let $(u, \mu, v)$ be a linear representation of rank $n$ with coefficients in $R$.
  Suppose that there exists $\cS \subseteq \cP([1,n])$ such that $[1,n] = \bigcup_{M \in \cS} M$, and all of the following conditions are satisfied.
  \begin{enumerate}
  \item \label{unam:1} There exists an $M \in \cS$ with $u \in V(M)$.
  \item \label{unam:2} For every $M \in \cS$ and $x \in X$, there exists $N \in \cS$ with $V(M) \mu(x) \subseteq V(N)$.
  \item \label{unam:3} For every $M \in \cS$, $x \in X$, and $j \in [1,n]$, there exists at most one $\nu \in M$ with $\mu(x)_{\nu,j} \ne 0$.
  \item \label{unam:4} For every $M \in \cS$ there exists at most one $\nu \in M$ with $v_\nu \ne 0$.
  \end{enumerate}

  Then the weighted automaton $\cA$ associated to the linear representation $(u, \mu, v)$  is unambiguous.
\end{lemma}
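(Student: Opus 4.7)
The plan is to deduce this lemma directly from \cref{l:unambiguous-automaton} by translating each of the hypotheses \ref{unam:1}--\ref{unam:4} on the linear representation into the corresponding hypothesis \ref{unam:a1}--\ref{unam:a4} on the associated weighted automaton $\cA=(Q,I,E,T)$, with $Q=[1,n]$, $I(k)=u_k$, $T(k)=v_k$, and $E(p,x,q)=\mu(x)_{p,q}$. The collection $\cS\subseteq\cP(Q)$ used to invoke \cref{l:unambiguous-automaton} will be exactly the same $\cS$ supplied in the hypothesis. Since $[1,n]=\bigcup_{M\in\cS}M$, the covering assumption transfers verbatim.

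The translations are essentially tautological. For \ref{unam:a1}, note that $k$ is an initial state of $\cA$ precisely when $u_k\ne 0$, so $u\in V(M)$ means that all initial states are contained in $M$; this is exactly condition \ref{unam:1}. For \ref{unam:a2}, the containment $V(M)\mu(x)\subseteq V(N)$ is equivalent to the statement that for each $p\in M$ and each $q\notin N$ the coordinate $\mu(x)_{p,q}$ vanishes; equivalently, whenever there is an edge from $p\in M$ to $q$ labeled by $x$ (i.e.\ $E(p,x,q)=\mu(x)_{p,q}\ne 0$), then $q\in N$. This is \ref{unam:a2}. Condition \ref{unam:3} says that for fixed $x$ and $j\in Q$ there is at most one $\nu\in M$ with $E(\nu,x,j)\ne 0$, which is exactly \ref{unam:a3} (with $q=j$). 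Condition \ref{unam:4} says that at most one $\nu\in M$ has $T(\nu)=v_\nu\ne 0$, i.e.\ $M$ contains at most one terminal state; this is \ref{unam:a4}.

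Having verified all four hypotheses of \cref{l:unambiguous-automaton} for $\cA$ and $\cS$, that lemma yields that $\cA$ is unambiguous, which is the claim. There is no substantive obstacle: the entire content of the lemma lies in recognizing that \ref{unam:1}--\ref{unam:4} are just the basis-level reformulations of \ref{unam:a1}--\ref{unam:a4} under the standard dictionary between linear representations and weighted automata, so the proof reduces to recording these equivalences and citing the previous lemma.
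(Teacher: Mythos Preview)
Your proof is correct and follows exactly the paper's approach: the paper's own proof simply states that the conditions translate directly into those of \cref{l:unambiguous-automaton} under the standard correspondence between linear representations and weighted automata. Your write-up just spells out these translations more explicitly than the paper does.
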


\begin{proof}
  Following the construction of the associated automaton $\cA$, the conditions above translate directly into the ones of \cref{l:unambiguous-automaton}.
\end{proof}

We are now ready to prove the main implication over a field.

\begin{proposition} \label{p:polya-unambig}
  Every rational Pólya series over $K$ is recognized by an unambiguous weighted automaton \textup{(}with weights in $K$\textup{)}.
\end{proposition}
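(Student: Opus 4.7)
The plan is first to build a convenient linear representation of $S$ and then to verify the four conditions of \cref{l:unambiguous} using the functional lemmas of \cref{sec:unit-equations}. Starting with the minimal linear representation given by \cref{l:good-basis} applied with $\Gamma = G_0$, I obtain $(u,\mu,v)$ with $u\mu(w) \in G_0^{1\times n}$ for every $w \in X^*$ and $v = e_1$. Feeding this into \cref{l:repr-directsum}, again with $\Gamma = G_0$, yields a new representation $(u',\mu',v')$ of some dimension $m$ whose linear hull $\overline{\Omega'}$ has irreducible components $W_1',\dots,W_k'$ that form a direct sum $K^{1\times m} = W_1' \oplus \cdots \oplus W_k'$ arising from a partition $[1,m] = M_1 \dotcup \cdots \dotcup M_k$ with $V(M_i) = W_i'$, while preserving $\Omega' \subseteq G_0^{1\times m}$ and $\Omega'v' \subseteq G_0$; after relabeling I may assume $u' \in W_1'$.

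Next I would apply \cref{l:unambiguous} with $\cS = \{M_1,\dots,M_k\}$. Condition \ref{unam:1} holds by the choice of $u'$. Condition \ref{unam:2} follows since $W_i'\mu'(x)$ is irreducible (continuous image of an irreducible set) and is contained in $\overline{\Omega'} = W_1' \cup \cdots \cup W_k'$, hence lies inside a single $W_j'$. For \ref{unam:3}, I fix $M_i$, $x \in X$, $j \in [1,m]$, and consider the linear functional $\varphi\colon W_i' \to K$, $a \mapsto (a\mu'(x))_j$. If $a \in \Omega' \cap W_i'$, then $a\mu'(x) \in \Omega' \subseteq G_0^{1\times m}$, so $\varphi(\Omega' \cap W_i') \subseteq G_0$; moreover $\Omega' \cap W_i'$ is dense in $W_i'$ by \cref{l:dense}\ref{dense:finitedense}. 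Applying \cref{l:functional} in characteristic zero, or \cref{l:functional-p} in positive characteristic, to $\varphi$ expressed in the basis $\{e_\nu : \nu \in M_i\}$ of $W_i'$, shows that at most one of the coefficients $\mu'(x)_{\nu,j}$ with $\nu \in M_i$ can be nonzero. Condition \ref{unam:4} is analogous, applied to $a \mapsto av'$ using $\Omega'v' \subseteq G_0$.

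The main obstacle is the positive-characteristic case, where \cref{l:functional-p} requires both that $K$ is infinite and finitely generated over $\bF_p$, \emph{and} a quantitative density hypothesis. For the former, I would descend to the subfield $K_0 \subseteq K$ generated by $G$ together with the finitely many entries of $u'$, $v'$, and the matrices $\mu'(x)$ for $x \in X$; if $K_0$ is finite, then $S$ takes only finitely many values and \cref{c:finite} already furnishes a deterministic (hence unambiguous) automaton, so I may assume $K_0$ is infinite. The density hypothesis is supplied by \cref{l:point-count}, whose polynomial lower bound of order $N^{1/(e\card{X})}$ dominates any $O(\log(N)^D)$. With both characteristic cases handled, all hypotheses of \cref{l:unambiguous} are met, and the weighted automaton associated to $(u',\mu',v')$ is therefore unambiguous, giving the proposition.
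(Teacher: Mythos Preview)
Your proposal is correct and follows essentially the same route as the paper: choose a minimal representation via \cref{l:good-basis}, pass through \cref{l:repr-directsum} to make the irreducible components of the linear hull a direct sum indexed by blocks $M_i$, and then verify the four conditions of \cref{l:unambiguous} using \cref{l:functional} (characteristic~$0$) or \cref{l:functional-p} together with \cref{l:point-count} (positive characteristic). The only noteworthy difference is the order of operations: the paper reduces to a finitely generated subfield \emph{before} invoking \cref{l:repr-directsum}, whereas you descend to $K_0$ afterwards; your ordering works too, but it tacitly requires that the decomposition $\irr(\overline{\Omega'}) = \{W_1',\dots,W_k'\}$ survives base change to $K_0$ (so that \cref{l:point-count} still applies with $V = W_i'$ over $K_0$), a point you should make explicit.
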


\begin{proof}
  Let $S$ be a rational Pólya series, and let $(u, \mu, v)$ be a minimal linear representation of $S$, chosen as in \cref{l:good-basis}.
  Hence $\Omega \coloneqq u \mu(X^*) \subseteq G_0^{1\times n}$ and $\Omega v \subseteq G_0$.
  If $n=0$, then $S=0$, and $S$ is recognized by the trivial automaton with empty set of states.
  To avoid this corner case, from now on assume $n \ge 1$.
  Then $v = e_1 \in K^{n \times 1}$ is the first standard basis vector by \cref{l:good-basis}.

  We may replace $K$ by the field generated by all coefficients in $u$, $v$, and $\mu(x)$ for $x \in X$.
  Thus, we may without restriction assume that $K$ is finitely generated over its prime field.
  We may also assume that $K$ is infinite; otherwise \cref{p:deterministic} implies the even stronger claim that $S$ is recognized by a deterministic automaton.

  Applying \cref{l:repr-directsum}, we can assume $K^{1\times n} = W_1 \oplus \cdots \oplus W_k$ with $W_i = \langle e_{m_1+\cdots+m_{i-1}+1}, \ldots, e_{m_1+\cdots+m_i} \rangle_K$ and $\irr(\overline \Omega) = \{ W_1, \ldots, W_k \}$.
  Without restriction $u \in W_1$.
  Note that, taking $\Gamma=G_0$ in \cref{l:repr-directsum}, also the properties $\Omega \subseteq G_0^{1\times n}$  and $\Omega v \subseteq G_0$ are preserved by this change of linear representation.

  For $i \in [1,k]$ let
  \[
    M_i = [m_1+\cdots+m_{i-1}+1, m_1+\cdots+m_i],
  \]
  so that $\{\, e_\nu : \nu \in M_i \,\}$ is a basis for $W_i$.
  We show that the linear representation $(u,\mu,v)$, with
  \[
    \cS = \{\, M_i : i \in [1,k] \,\}
  \]
  satisfies the conditions of \cref{l:unambiguous}, from which the claim will follow.
  
  Indeed, \ref{unam:1} holds for $M_1$ since $u \in W_1$.
  Since $\mu(x)$ is continuous, statement \ref{dense:continuous-component} of \cref{l:dense} implies that for every $i \in [1,k]$ and $x \in X$, there exists $j \in [1,k]$ such that $W_i \mu(x) \subseteq W_j$.
  This implies \ref{unam:2}.

  Let $i \in [1,k]$, let $x \in X$, and let $j \in [1,n]$.
  Let $\varphi\colon W_i \to K$ be defined by $\varphi(a) = a \mu(x) e_j^T$.
  By \ref{dense:finitedense} of \cref{l:dense}, the set $\Omega \cap W_i$ is dense in $W_i$.
  Moreover $\Omega \cap W_i \subseteq \sum_{\nu \in M_i} G_0 e_\nu$ since $\Omega \subseteq G_0^{1 \times n}$.
  Finally, if $a \in \Omega$, then $a \mu(x) \in \Omega \subseteq G_0^{1\times n}$, so that $\varphi(\Omega \cap W_i) \subseteq G_0$.
  If $K$ has characteristic $0$, we can thus apply \cref{l:functional} to the vector space $W_i$, its dense subset $\Omega \cap W_i$, and the homomorphism $\varphi \colon W_i \to K$.
  We conclude that there exists at most one $\nu \in M_i$ with $0 \ne \varphi(e_\nu) = e_\nu \mu(x) e_j^T$.
  Since $e_\nu \mu(x) e_j^T$ is the $(\nu,j)$-entry of the matrix $\mu(x)$, there is at most one $\nu \in M_i$ with $\mu(x)_{\nu,j} \ne 0$.
  Thus \ref{unam:3} holds in characteristic $0$.

  If $K$ has positive characteristic, we apply \cref{l:functional-p} instead of \cref{l:functional}; the additional condition in this lemma is satisfied by \cref{l:point-count}.
  This shows \ref{unam:3} in positive characteristic.

  Similarly, applying \cref{l:functional} in characteristic $0$ (respectively \cref{l:functional-p} together with \cref{l:point-count} in positive characteristic) to the map $W_i \to K$ given by $a \mapsto av$, we find that there exists at most one $\nu \in M_i$ with $v_\nu \ne 0$, implying \ref{unam:4}.
\end{proof}

\begin{example} \label{exm:unambig}
  (Continuation of \cref{exm:polya}.)
  We illustrate the construction of the previous proof using the linear representation from \cref{exm:polya}.
  The linear hull decomposes as $\overline \Omega = W_1 \cup W_2$ with $W_1 = \langle e_1+e_2,e_3 \rangle$ and $W_2 = \langle e_1-e_2,e_3 \rangle$.
  Now $W_1 \oplus W_2 \cong K^{1\times 4} = \langle e_1', \ldots, e_4' \rangle$, where we fix the embeddings $W_1 \hookrightarrow K^{1 \times 4}$, $e_1+e_2 \mapsto e_1'$ and $e_3 \mapsto e_2'$, as well as $W_2 \hookrightarrow K^{1\times 4}$, $e_1-e_2 \mapsto e_3'$ and $e_3 \mapsto e_4'$.
  For every $x \in \{a,b,c\}$ we need to choose $f_x\colon \{W_1,W_2\} \to \{W_1,W_2\}$ such that $W_i\mu(x)\subseteq f_x(W_i)$ for $i \in\{1,2\}$. For $a$,~$b$ there is a unique such choice.
Since $W_i\mu(c) \subseteq W_1 \cap W_2$, there are several choices for $c$ and we pick $f_c(W_i)=W_i$ for $i \in \{1,2\}$.
This choice fixes a deterministic automaton describing the transitions between irreducible components (left side of \cref{fig:unambig}).

The newly constructed linear representation on $K^{1\times 4}$ is given by $(u',\mu',v')$ with $u'=(1,1,0,0)$, with $v'=(2,0,0,0)^T$, and with
  \begin{align*}
    \mu'(a) &=
    \left(\! \begin{array}{cc:cc}
      0 & 0 & 2 & 0 \\
      0 & 0 & 0 & 3 \\
      \hdashline
      2 & 0 & 0 & 0 \\
      0 & 3 & 0 & 0
    \end{array}\!\right),&
    \mu'(b) &=
    \left(\! \begin{array}{cc:cc}
      0 & 1 & 0 & 0 \\
      1 & 0 & 0 & 0 \\
      \hdashline
      0 & -1 & 0 & 0 \\
      1 & 0 & 0 & 0
    \end{array}\!\right),&
    \mu'(c) &=
    \left(\! \begin{array}{cc:cc}
      0 & 0 & 0 & 0 \\
      0 & 5 & 0 & 0 \\
      \hdashline
      0 & 0 & 0 & 0 \\
      0 & 0 & 0 & 5
    \end{array} \!\right).
  \end{align*}
  Here the block structure is determined by the choice of transitions between irreducible components, e.g., a different choice of $f_c$ would yield a different matrix $\mu'(c)$.
  The resulting automaton is depicted in the right side of \cref{fig:unambig}.

\begin{figure}
  \centering
  \begin{tikzpicture}[shorten >=1pt,node distance=3.5cm,on grid]
    \node[state]   (s1)                {$1'$};
    \node[state]   (s2) [right of=s1] {$2'$};
    \node[state]   (s3) [below=2.75cm of s1] {$3'$};
    \node[state]   (s4) [right of=s3] {$4'$};
    
    \path[->]
    (s1) edge [bend right=10] node [below] {$b$} (s2)
    (s2) edge [bend right=10] node [above] {$b$} (s1)
    (s3) edge node [right,xshift=5mm,yshift=-2.5mm] {$b$} (s2)
    (s4) edge node [left,xshift=-4mm,yshift=-2.5mm] {$-b$} (s1)

    (s1) edge [bend right=10] node [left,yshift=2mm] {$2a$} (s3)
    (s3) edge [bend right=10] node [right,yshift=2mm] {$2a$} (s1)
    (s4) edge [bend right=10] node [right,yshift=2mm] {$3a$} (s2)
    (s2) edge [bend right=10] node [left,yshift=2mm] {$3a$} (s4)

    (s4) edge [loop right] node {$c$} (s4)
    (s2) edge [loop right] node {$c$} (s2);

    \path[->] +([xshift=-0.15cm,yshift=0.75cm]s1.north) edge ([xshift=-0.15cm]s1.north)
    +([xshift=0.15cm]s1.north) edge ([xshift=0.15cm,yshift=0.75cm]s1.north) node [right,yshift=0.5cm] {$2$};
    \path[->] +([yshift=0.75cm]s2.north) edge (s2.north);
    
    \draw[rounded corners=0.3cm,dotted] (-0.75cm,0.75cm) rectangle ++(6.5cm,-1.5cm);
    \draw[rounded corners=0.3cm,dotted] (-0.75cm,-2cm) rectangle ++(6.5cm,-1.5cm);

    \node at (5.75cm,-0.75cm) [above left,color=darkgray] {$W_1$};
    \node at (5.75cm,-3.5cm) [above left,color=darkgray] {$W_2$};
    
    %%%%%%% 
    \node[state]   (Z1) [left=5cm of s1]  {$W_1$};
    \node[state]   (Z2) [left=5cm of s3]  {$W_2$};

    \path[->]
    (Z1) edge [bend right=30] node [left] {$a$} (Z2)
    (Z2) edge [bend right=30] node [right] {$a$} (Z1)

    (Z2) edge node [right] {$b$} (Z1)
    (Z1) edge [loop left] node {$b$} (Z1)
    
    (Z1) edge [loop right] node {$c$} (Z1)
    (Z2) edge [loop right] node {$c$} (Z2);

    \path[->]
    +([xshift=-0.15cm,yshift=0.75cm]Z1.north) edge ([xshift=-0.15cm]Z1.north)
    +([xshift=0.15cm]Z1.north) edge ([xshift=0.15cm,yshift=0.75cm]Z1.north);
  \end{tikzpicture}

  \caption{(\cref{exm:unambig}) \emph{Left:} Our choice of transitions between irreducible components can be depicted as a deterministic automaton. \emph{Right:} An unambiguous weighted automaton recognizing the same series as in \cref{exm:polya,fig:polya}.}
  \label{fig:unambig}
\end{figure}
\end{example}

\begin{remark} \label{rem:transducer}
  Let $X$,~$Y$ be finite sets. A \defi{rational function} is a function $f\colon X^* \to Y^*$ whose graph is a rational subset of $X^* \times Y^*$.
  By the \emph{Decomposition Theorem} of Elgot--Mezei every such rational function is a composition of a (pure) sequential function with a (pure) co-sequential function \cite[Chapter V.2]{sakarovitch09} (varying terminology is used, see for instance \cite{arnold-latteux79}; we follow Sakarovitch).

  One of the proofs of this theorem produces, through the use of the Schützenberger covering, a \emph{semi-monomial} linear representation.
  This is the same type of block-matrix structure we have obtained here.
  Consequently we can obtain an analogous decomposition of the weighted automaton:
  every Pólya series is recognized by a weighted finite automaton that is a composition of a sequential function followed by a co-deterministic weighted automaton.
  (A weighted automaton is \defit{co-deterministic} if there is unique final state and for every $x \in X$ and $q \in Q$ there is at most one $p \in Q$ with $E(p,x,q) \ne 0$.)
  As the construction is very similar to the one in \cite[Chapter V.2.2]{sakarovitch09}, we omit the details.
\end{remark}

If $X$ is a singleton, then series recognized by an unambiguous weighted automaton have a particularly simple shape.
In this way we will recover the full univariate result of Pólya, Benzaghou, and Bézivin \cite{polya21,benzaghou70,bezivin87}.

\begin{proposition} \label{p:unambig-ap}
  Suppose $X=\{x\}$ consists of a single element, let $\cA$ be an unambiguous weighted automaton with weights in $R$, and let $S \in R\llangle X \rrangle$ be the series it recognizes.
  Then there exist a finite set $F \subseteq \bZ_{\ge 0}$, an element $d \in \bZ_{\ge 0}$, and for each $r \in [0,d-1]$ elements $a_r \in R$ and $b_r \in R \setminus \{0\}$ such that
  \[
    S(x^{kd+r}) = a_r b_r^k \qquad\text{for all } k \in \bZ_{\ge 0} \text{ and } r \in [0,d-1] \text{ with } xd+r \not \in F.
  \]
\end{proposition}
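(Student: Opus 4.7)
The plan is to exploit unambiguity to prove a rigidity result for the underlying directed graph of the trim automaton, from which the asserted form of the coefficients follows by combinatorial bookkeeping. Without loss of generality I would first replace $\cA$ by its trim sub-automaton; this preserves both $S$ and the unambiguity property, since every discarded state lies on no accepting path at all.

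The key structural claims are: (i) every strongly connected component $C$ of the underlying directed graph that contains at least one cycle is itself a simple cycle; and (ii) no accepting path can traverse two distinct non-trivial SCCs. Both are proved by contradiction. For (i), suppose some $q \in C$ lay on two distinct cycles $\gamma_1$,~$\gamma_2$ of lengths $d_1$,~$d_2$; since $q$ is on some accepting path of length $n_0$, inserting $\gamma_1^{d_2}$ versus $\gamma_2^{d_1}$ into that path produces two distinct accepting paths of the common length $n_0 + d_1 d_2$ (and if $d_1 = d_2$, inserting a single copy of $\gamma_1$ versus $\gamma_2$ already does the job), contradicting unambiguity. For (ii), if an accepting path visits a simple cycle $C_1$ of length $d_1$ and later a simple cycle $C_2$ of length $d_2$, one likewise distributes additional loops between $C_1$ and $C_2$ to produce two distinct accepting paths of the same length.

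With this rigidity in hand, I would classify accepting paths into finitely many \emph{templates}, each determined by a simple path $\alpha$ from an initial state into a simple cycle $C_i$, an entry vertex $q_1 \in C_i$, an exit vertex $q_2 \in C_i$, and a simple path $\beta$ from $q_2$ to a terminal state; by (i) and (ii) there are finitely many such choices. Template $T_i$ contributes accepting paths of length $L_{T_i} + k d_{C_i}$ for $k \in \bZ_{\ge 0}$, with weight $A_{T_i} B_{T_i}^k$: here $A_{T_i} \in R$ collects the initial and terminal weights together with the weights of $\alpha$, $\beta$, and the partial cycle traversal $q_1 \rightsquigarrow q_2$ inside $C_i$, while $B_{T_i} \in R \setminus \{0\}$ is the product of the edge weights along one full loop of $C_i$.

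Finally, I would set $d = \operatorname{lcm}_i d_{C_i}$. Template $T_i$ contributes to residue class $r$ modulo $d$ precisely when $r \equiv L_{T_i} \pmod{d_{C_i}}$, and in that case contributes to every $n = kd + r$ with $k$ large enough that $n \ge L_{T_i}$; the finitely many small $n$'s on which no template has yet ``switched on'' are placed into the exception set $F$. Unambiguity now forces at most one template to contribute to each fixed residue class, for otherwise two templates would yield distinct accepting paths of the same length for infinitely many $n$ in that class. Reparametrizing the $k$-index of the unique contributing template gives $S(x^{kd+r}) = a_r b_r^k$ outside $F$, with $b_r = B_{T_i}^{d/d_{C_i}}$ nonzero and $a_r$ a suitable product of $A_{T_i}$ with a power of $B_{T_i}$; residues $r$ admitting no contributing template satisfy $S(x^{kd+r}) = 0$ identically and are handled by $a_r = 0$ together with any $b_r \in R \setminus \{0\}$. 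I expect the main obstacle to be the rigidity statement (i)---the fact that unambiguity on a unary alphabet forces each non-trivial SCC to collapse to a simple cycle---since it is the heart of how unambiguity constrains the graph structure; once (i) is in place, (ii) and the template enumeration follow by essentially the same kind of loop-distribution argument.
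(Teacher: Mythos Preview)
Your proposal is correct and reaches the result by a genuinely different route from the paper. The paper's argument is more direct: after trimming, it uses the fact that in a trim unambiguous automaton there is at most one path of any given length between any two fixed states, and partitions the set of accepting lengths according to the (initial, terminal) pair $(p,q)$. Unambiguity forces these sets $A(p,q)$ to be pairwise disjoint; each infinite $A(p,q)$ is then shown to be a single arithmetic progression $\{m_{(p,q)} + k\,d_{(p,q)} : k \ge 0\}$ on which $S$ takes the values $a_{(p,q)}\,b_{(p,q)}^k$, with $b_{(p,q)}$ the weight of a cycle at $q$ of length $d_{(p,q)}$; one finishes by passing to the least common multiple of the $d_{(p,q)}$. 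Your approach instead first establishes a structural theorem about the underlying graph---non-trivial strongly connected components are simple cycles, and no accepting path meets two of them---and only then enumerates templates. The paper's method buys brevity: no SCC decomposition is needed, and the arithmetic-progression structure is read off directly from the pair partition. Your method buys transparency: claims (i) and (ii) explain structurally \emph{why} unambiguity on a unary alphabet forces geometric behaviour, and the template picture makes the identification of $b_r$ as a power of a cycle weight completely explicit. Note, incidentally, that your template decomposition is a refinement of the paper's $(p,q)$-partition: several templates may share the same initial and terminal state, and the paper's argument shows that even this coarser partition already yields arithmetic progressions.
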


\begin{proof}
  We may assume that $\cA$ is trim.
  Then, for any two states $p$, $q$ and any $n \ge 0$ there exists at most one path from $p$ to $q$ labeled by $x^n$.

  Let $m$ be the maximal length of an acyclic path in $\cA$, that is, a path that does not visit any vertex twice.
  Then any cycle, that is, a path whose only repeated vertices are the first and the last one, has length bounded by $m+1$.
  Let $d$ be a common multiple of all lengths of cycles in $\cA$, e.g., $d = (m+1)!$.
  Let $r \in \{0,\ldots,d-1\}$ and let us consider the claim for $S(x^{kd+r})$ with $k \ge 0$.
  If $S(x^{kd+r}) =0$ for all but finitely many $d \ge 1$, the claim holds with $a_r=0$.
  Otherwise, let $k_0 \ge 1$ with $k_0d+r > m$ and $S(x^{k_0d+r}) \ne 0$.
  The (unique) accepting path labeled by $x^{k_0d+r}$ must contain a cycle, and so is of the form $pcq$ with $c$ a cycle of length $l$ dividing $d$, and $p$, $q$ paths.
  Let $e \in \bN$ with $d=le$, let $a_r \coloneqq S(x^{pq})$ and let $b_r$ be the product of the weights along $c^e$.
  For all $n \ge 0$, the path $pc(c^e)^nq$ is the unique accepting path for $x^{(k_0+n)d+r}$.
  Hence $S(x^{(k_0+n)d+r}) = a_r b_r^n$ for all $n \ge 0$.
  Thus $S(x^{kd+r}) = (a_rb_r^{-k_0}) b_r^k$ for all $k \ge k_0$.
\end{proof}

\section{Proof of \ref{main:unam-auto}\texorpdfstring{$\,\Leftrightarrow\,$}{<=>}\ref{main:unam-rat}}
\label{sec:unam-auto-unam-rat}

The following proof very closely follows \cite[Proposition 1.3.5]{lothaire02}, where the same result is proved for deterministic automata without weights.
A language $\cL \subseteq X^*$ is a \defi{code} if the elements of $\cL$ are a basis of a free submonoid of $X^*$.

\begin{proposition}[Reutenauer] \label{p:unambig}
  If a rational series $S \in R\llangle X \rrangle$ is recognized by an unambiguous weighted automaton with weights in $R$, then $S$ is unambiguous over $R$.
\end{proposition}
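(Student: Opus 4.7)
The plan is to extract an unambiguous rational expression for $S$ from the unambiguous weighted automaton $\cA=(Q,I,E,T)$ by a Kleene/state-elimination style recursion. First I would pass to an equivalent trim automaton (which preserves unambiguity). Trimness then upgrades the unambiguity hypothesis into a \emph{strong} unambiguity: for any states $p,q \in Q$ and any $w \in X^*$, there is at most one path from $p$ to $q$ in $\cA$ labeled by $w$. Indeed, given two such paths, trimness furnishes paths from some initial state to $p$ and from $q$ to some terminal state, producing two distinct accepting paths with a common label and contradicting the hypothesis.

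Enumerating $Q = [1,n]$, I would define for each $k \in [0,n]$ and each $i,j \in Q$ the rational series $B^k_{i,j} \in R\llangle X\rrangle$ whose coefficient at $w$ is the sum $\sum_\pi E(\pi)$ over all paths $\pi$ of length at least one from $i$ to $j$ whose intermediate vertices lie in $[1,k]$. The base case $B^0_{i,j}=\sum_{x\in X}E(i,x,j)\,x$ is a polynomial and is trivially unambiguous. Splitting every $\pi$ counted by $B^k_{i,j}$ according to whether it visits state $k$ intermediately and, if so, decomposing at its successive visits to $k$, yields the standard recursion
\[
  B^k_{i,j} \;=\; B^{k-1}_{i,j} \;+\; B^{k-1}_{i,k}\,(B^{k-1}_{k,k})^*\,B^{k-1}_{k,j}.
\]

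The main work is to verify, by induction on $k$, that each operation in this recursion is unambiguous; this is where strong unambiguity is used. For the outer sum: a word appearing in both supports would be the common label of two distinct paths from $i$ to $j$ (one visiting $k$ intermediately, one not), contradicting strong unambiguity. For the star: if $u_1\cdots u_m = v_1\cdots v_{m'}$ with all factors in $\supp(B^{k-1}_{k,k})$, then the unique path representatives (given by strong unambiguity) of the factors concatenate into two paths from $k$ to $k$ with a common label; these must coincide, so the cut-points at $k$ match and the factorizations agree, proving $\supp(B^{k-1}_{k,k})$ is a code. For the products: the unique position of the first (resp. last) visit of the unique path labeled $w$ to state $k$ pinpoints the only admissible split, so both $B^{k-1}_{i,k}(B^{k-1}_{k,k})^*$ and the further product with $B^{k-1}_{k,j}$ are unambiguous.

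Finally, assembling $S$ amounts to
\[
  S \;=\; \Bigl(\sum_{i \in Q} I(i)T(i)\Bigr)\cdot 1 \;+\; \sum_{i,j \in Q} I(i)\,B^n_{i,j}\,T(j),
\]
where the first summand is just the scalar $\sum_i I(i)T(i) \in R$ on the empty word (a polynomial, hence unambiguous), and the outer sum over $(i,j)$ is unambiguous because a word of positive length in the support of $I(i)B^n_{i,j}T(j)$ is the label of an accepting path from $i$ to $j$, of which $\cA$ admits at most one, so its endpoints are determined and distinct $(i,j)$-summands have disjoint supports. The main obstacle I foresee is the bookkeeping at the recursion step, since sum, product, and star each demand their own argument; once the strong unambiguity bootstrapped from trimness is in hand, however, these arguments are essentially dictated.
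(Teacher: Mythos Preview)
Your argument is correct and is essentially the same as the paper's: the paper also passes to a trim automaton, derives the same strong unambiguity (at most one path between any two states with a given label), defines the series $S_{p,P,q}$ over paths from $p$ to $q$ with intermediate states in a subset $P\subseteq Q$, and uses the identical recursion $S_{p,P\cup\{r\},q}=S_{p,P,q}+S_{p,P,r}S_{r,P,r}^{*}S_{r,P,q}$ together with the final assembly $S=\sum_{p,q}I(p)S_{p,Q,q}T(q)+\sum_{p}I(p)T(p)$. Your $B^{k}_{i,j}$ is literally the paper's $S_{i,[1,k],j}$, and you supply more detail than the paper does on why each of the sum, star, and product steps is unambiguous.
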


\begin{proof}
  Let $\cA$ be an unambiguous weighted automaton that recognizes $S$.
  We may without restriction assume that $\cA$ is trim.
  Then, for any two states $p$, $q$ and any word $w \in X^*$ there exists at most one path from $p$ to $q$ labeled by $w$.

  For states $p$,~$q \in Q$ and a set $P \subseteq Q$ define
  \[
    S_{p,P,q} = \sum_{\substack{p_1, \ldots, p_{l-1} \in P\\ p=p_0,\, p_l=q\\ a_1,\ldots,a_l \in X\\ l \ge 1}} E(p_0,a_1,p_1) \cdots E(p_{l-1},a_l,p_l)a_1\cdots a_l.
  \]
  In words, the sum is taken over all non-empty paths from $p$ to $q$ with the property that all states strictly in-between are in $P$.
  Since $\cA$ is unambiguous, the words in $\supp(S_{p,P,q})$ are in bijective correspondence with non-empty paths from $p$ to $q$.

  Then
  \[
    S = \sum_{p, q \in Q} I(p)S_{p,Q,q}T(q) + \sum_{p \in Q} I(p)T(p),
  \]
  and the finite sum on the left is unambiguous because $\cA$ is unambiguous.

  It suffices to show that each $S_{p,P,q}$ is unambiguous, and we do so by induction on $\card{P}$.
  If $P = \emptyset$, then $S_{p,P,q}$ is a polynomial and hence unambiguous.
  If $r \not \in P$, then
  \[
    S_{p,P \cup \{r\},q} = S_{p,P,q} + S_{p,P,r} S_{r,P,r}^* S_{r,P,q}.
  \]

  Note that $\supp(S_{r,P,r})$ consist of the words labeling first returns of $r$, that is, non-empty paths starting and ending at $r$ that do not pass through $r$ in-between.
  Using that $\cA$ is unambiguous, it is easily seen that the words in $\supp(S_{r,P,r})$ are a code.
  Hence $S_{r,P,r}^*$ is unambiguous.
  Similarly, we see that the products and the sum are unambiguous, by looking at when a path passes through $r$.
\end{proof}

The converse of the previous implication is also easy to see.
\begin{lemma} \label{l:unambig-series-to-automaton}
  If $S \in R\llangle X \rrangle$ is unambiguous rational, then there exists an unambiguous weighted automaton with weights in $R$ that recognizes $S$.
\end{lemma}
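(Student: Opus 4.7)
The plan is to proceed by structural induction on the presentation of $S$ as an unambiguous rational series, following the four cases of \cref{def:unambiguous-rat}: polynomials, unambiguous sum, unambiguous product, and unambiguous star.

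For a polynomial $P \in R\langle X \rangle$, I would construct an automaton with disjoint linear branches, one for each $w = x_1 \cdots x_l \in \supp(P)$: fresh states $q_0^{(w)}, \ldots, q_l^{(w)}$ with initial weight $1$ on $q_0^{(w)}$, terminal weight $P(w)$ on $q_l^{(w)}$, and transitions $E(q_{i-1}^{(w)}, x_i, q_i^{(w)}) = 1$. Every word in $\supp(P)$ has exactly one accepting path (its own branch), so this is an unambiguous automaton recognizing $P$.

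For the inductive step, suppose $S_1$ and $S_2$ are already recognized by unambiguous automata $\cA_1$ and $\cA_2$. For the unambiguous sum $S = S_1 + S_2$ (with $\supp(S_1) \cap \supp(S_2) = \emptyset$), I would take the disjoint union $\cA_1 \sqcup \cA_2$: each accepting path lies entirely in one component, so by disjointness each word is accepted in at most one component, yielding at most one accepting path per word. For the unambiguous product $S = S_1 S_2$, I would use the standard concatenation construction: states $Q_1 \sqcup Q_2$, initial weights from $\cA_1$, terminal weights from $\cA_2$, the original transitions preserved, plus bridging transitions between $\cA_1$ and $\cA_2$ that implement the ``exit terminal of $\cA_1$, enter initial of $\cA_2$'' step along a single letter. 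An accepting path for $w$ then corresponds to a factorization $w = w_1 w_2$ together with accepting paths for $w_i$ in $\cA_i$; the unambiguity of the product gives a unique factorization, and the unambiguity of $\cA_1, \cA_2$ gives at most one accepting path per factor, so the combined automaton is unambiguous.

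For the unambiguous star $S = S_1^*$, where $\cK = \supp(S_1)$ is a code and $S_1$ has zero constant coefficient, I would adapt the standard Kleene-star construction: add a fresh state which is both initial and terminal of weight $1$ (to contribute the empty word), and add loopback bridging transitions that, upon reaching a terminal state of $\cA_1$, allow restarting a run through $\cA_1$ on the next letter. An accepting path for $w \ne \varepsilon$ then corresponds to a factorization $w = w_1 \cdots w_k$ into elements of $\cK$, together with an accepting path in $\cA_1$ for each $w_i$; since $\cK$ is a code the factorization is unique, and by inductive unambiguity each factor has at most one accepting path, so the resulting automaton is unambiguous. The main obstacle I anticipate is the bookkeeping for the boundary transitions in the product and star constructions (the weights of bridging transitions must correctly encode the terminal--initial ``handshake''), but the conceptual content is straightforward: in each case the uniqueness of factorizations supplied by the hypothesis, combined with the inductive unambiguity, delivers the unambiguity of the constructed automaton.
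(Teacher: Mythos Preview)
Your approach is correct and is exactly what the paper does: the paper's proof is the single sentence ``A suitable weighted automaton can inductively be constructed from an unambiguous rational decomposition of $S$,'' and your proposal spells out precisely this structural induction. The bookkeeping concerns you flag for the bridging transitions are real but routine, and the paper evidently regards them as standard enough to omit.
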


\begin{proof}
  A suitable weighted automaton can inductively be constructed from an unambiguous rational decomposition of $S$.
\end{proof}

\section{Proof of \ref{main:unam-rat}\texorpdfstring{$\,\Rightarrow\,$}{=>}\ref{main:formula}}
\label{sec:unam-rat-formula}

A clever proof of \ref{main:unam-rat}$\,\Rightarrow\,$\ref{main:formula} of \cref{t:main} is given by Reutenauer in the proof of \cite[Proposition 4, (iii)$\,\Rightarrow\,$(ii)]{reutenauer79}.
We opt to give an alternative, somewhat longer but very straightforward, proof of the same result.

\begin{lemma} \label{l:ratexpr}
  Let $A$,~$B \in \bZ\llangle X \rrangle$, and let $\cL$,~$\cK$ be rational languages with $\supp(A) \subseteq \cL$ and $\supp(B) \subseteq \cK$.
  \begin{enumerate}
  \item\label{ratexpr:sum} Suppose $\cK \cap \cL = \emptyset$.
    Then $C= A+B$ is a rational series with $\supp(C) \subseteq \cK \cup \cL$ and
    \[
      C(w) =
      \begin{cases}
        A(w) &\text{if $w \in \cL$,} \\
        B(w) &\text{if $w \in \cK$.}
      \end{cases}
    \]

  \item\label{ratexpr:product} Suppose $\cL \cK$ is unambiguous.
    Then $C=A \charser{\cK} + \charser{\cL} B$ is a rational series with $\supp(C) \subseteq \cL\cK$.
    For $w=uv$ with $u \in \cL$, $v \in \cK$,
    \[
      C(w) = A(u) + B(v).
    \]
  \item\label{ratexpr:star} Suppose that $\cL$ is a code.
    Then
    \[
      C = (1 - \charser{\cL^*} A)\big((\charser{\cL} + A)^* - \charser{\cL^*}\big)
    \]
    is a rational series with $\supp(C) \subseteq \cL^*$.
    For $w=w_1\cdots w_l$ with $w_1$, $\ldots\,$,~$w_l \in \cL$,
    \[
      C(w) = A(w_1) + \dots + A(w_l).
    \]
  \end{enumerate}
\end{lemma}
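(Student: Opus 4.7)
My plan is to treat the three parts separately, since they have quite different levels of difficulty. For \ref{ratexpr:sum}, rationality is immediate from the closure of rational series under addition, and I would derive the coefficient formula directly from $\supp(A) \cap \supp(B) \subseteq \cL \cap \cK = \emptyset$: whenever $w \in \cL$ we have $B(w) = 0$, and whenever $w \in \cK$ we have $A(w) = 0$, so $(A+B)(w)$ is the single nonzero summand in each case.

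For \ref{ratexpr:product}, rationality is again immediate, and I would expand each convolution directly. A nonzero contribution to $(A\charser{\cK})(w) = \sum_{w = w_1 w_2} A(w_1) \charser{\cK}(w_2)$ forces $w_1 \in \cL$ and $w_2 \in \cK$, and by the unambiguity of $\cL\cK$ there is at most one such factorization. When $w = uv$ with $u \in \cL$, $v \in \cK$, this yields $(A\charser{\cK})(w) = A(u)$; symmetrically $(\charser{\cL} B)(w) = B(v)$. Adding gives the formula, and the support containment $\supp(C) \subseteq \cL\cK$ follows.

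Part \ref{ratexpr:star} is the main technical step. Rationality is clear because $\cL$, being a code, does not contain the empty word, so $\charser{\cL} + A$ has zero constant term and its star is well-defined. I would fix $w \in \cL^*$ with its unique $\cL$-factorization $w = w_1 \cdots w_l$. Because any factorization of $w$ into elements of $\supp(\charser{\cL} + A) \subseteq \cL$ must coincide with this one, only $n = l$ contributes to the coefficient of $w$ in $(\charser{\cL} + A)^*$, giving $\prod_{i=1}^l (1 + A(w_i))$. Subtracting $\charser{\cL^*}(w) = 1$ yields
\[
  B(w) \coloneqq \big((\charser{\cL} + A)^* - \charser{\cL^*}\big)(w) = \prod_{i=1}^l (1 + A(w_i)) - 1,
\]
and the same code argument gives $(\charser{\cL^*} A)(w) = A(w_l)$ for $w \in \cL^+$ and $0$ for the empty word.

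The hard part will be simplifying $C(w) = B(w) - (\charser{\cL^*} A \cdot B)(w)$; the key observation is a telescoping identity. Setting $p_k = \prod_{i=k+1}^l (1 + A(w_i))$, so that $p_0 - 1 = B(w)$ and $p_{l-1} = 1 + A(w_l)$, the relation $A(w_k) p_k = p_{k-1} - p_k$ gives $\sum_{k=1}^{l-1} A(w_k) p_k = p_0 - p_{l-1}$, and the telescoping collapses the expression for $C(w)$ to $A(w_l) + \sum_{k=1}^{l-1} A(w_k) = \sum_{k=1}^l A(w_k)$, as required. For $w \notin \cL^*$, each of $B$, $\charser{\cL^*}$, and $\charser{\cL^*} A$ is supported on $\cL^*$, so $C(w) = 0$. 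Once the telescoping is spotted, the rest is straightforward bookkeeping.
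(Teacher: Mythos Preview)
Your proposal is correct and follows essentially the same route as the paper. Parts \ref{ratexpr:sum} and \ref{ratexpr:product} match the paper's argument verbatim. In part \ref{ratexpr:star} there is only a cosmetic difference: the paper expands $(\charser{\cL}+A)^*(w)$ as the sum $\sum_{k=0}^{l}\sum_{i_1<\cdots<i_k} A(w_{i_1})\cdots A(w_{i_k})$ over subsets and then observes that multiplying by $\charser{\cL^*}A$ kills the $k=1$ term and shifts the rest, leaving exactly the singleton sum $\sum_k A(w_k)$; you instead keep the factored form $\prod_i(1+A(w_i))$ and run a telescoping via $A(w_k)p_k=p_{k-1}-p_k$. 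These are the same computation in different notation, and both rely on the same two facts you identify: the code property forces all factorizations of $w$ to align with $w_1\cdots w_l$, and $(\charser{\cL^*}A)(w_1\cdots w_j)=A(w_j)$.
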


\begin{proof}
  Throughout, we use that the characteristic series $\charser{\cL} \in \bZ\llangle X\rrangle$ of a rational language $\cL$ is rational.

  \ref*{ratexpr:sum} Clear.

  \ref*{ratexpr:product}
  For $w \in X^*$ we have $A \charser{\cK}(w) = \sum_{w=uv} A(u)\charser{\cK}(v)$.
  A term $A(u)\charser{\cK}(v)$ is nonzero if and only if $u \in \supp(A) \subseteq \cL$ and $v \in \cK$.
  Since $\cL\cK$ is unambiguous there is at most one such term.
  Thus $A \charser{\cK}(w) = A(u)$ if $w \in \cL\cK$ with $w=uv$ where $u \in \cL$, $v \in \cK$, and $A \charser{\cK}(w)=0$ if $w \not \in \cL\cK$.
  An analogous claim holds for $\charser{\cL}{B}$.

  \ref*{ratexpr:star}
  For $w \in \cL^*$ there are uniquely determined $w_1$, $\ldots\,$,~$w_l \in \cL$ with $w=w_1\cdots w_l$.
  Then
  \[
    (\charser{\cL} + A)^*(w) = \sum_{k=0}^l \sum_{1 \le i_1 < \dots < i_k \le l} A(w_{i_1}) \cdots A(w_{i_k}).
  \]
  For $D = (\charser{\cL} + A)^* - \charser{\cL^*}$ we obtain an analogous sum with $k \in [1,l]$.

  Now, $\charser{\cL^*} A(1)=0$ since $1 \not \in \cL$ and, for $j \ge 1$,
  \[
    \charser{\cL^*} A(w_1 \cdots w_j) = \sum_{i=0}^j \charser{\cL^*}(w_1\cdots w_i) A(w_{i+1} \cdots w_j) = A(w_j).
  \]
  Therefore
  \[
    \begin{split}
      \charser{\cL^*} AD(w) &= \sum_{j=1}^l A(w_j)D(w_{j+1}\cdots w_l) =\\
      &= \sum_{j=1}^l A(w_j) \sum_{k=1}^{l-j} \sum_{j+1 \le i_1 < \dots < i_k \le l} A(w_{i_1}) \cdots A(w_{i_k}) \\
      &= \sum_{k=2}^l \sum_{1 \le i_1 < \dots < i_k \le l} A(w_{i_1}) \dots A(w_{i_k}).
    \end{split}
  \]
  Thus $(1-\charser{\cL^*}A)D(w) = \sum_{k=1}^l A(w_k)$.
\end{proof}

A series $a \in \bZ\llangle X\rrangle$ is \emph{linearly bounded} if there exists $C \ge 0$ such that $\abs{a(w)} \le C \length{w}$ for all nonempty words $w \in X^*$.

\begin{proposition} \label{p:unambig-formula}
  Let $S \in R \llangle X \rrangle$ be an unambiguous rational series.
  Then there exist $\lambda_1$, $\ldots\,$,~$\lambda_k \in R \setminus \{0\}$, linearly bounded rational series $a_1$, $\ldots\,$,~$a_k \in \bZ\llangle X^*\rrangle$, and a rational language $\cL$ such that $\supp(a_i) \subseteq \cL$ for all $i \in [1,k]$ and
  \[
    S(w) =
    \begin{cases}
      \lambda_1^{a_1(w)} \cdots \lambda_k^{a_k(w)} & \text{if $w \in \cL$},\\
      0 &\text{if $w \not\in\cL$.}
    \end{cases}
  \]
\end{proposition}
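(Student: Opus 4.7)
The plan is to induct on the construction of $S$ as an unambiguous rational expression, carrying along as inductive invariant the existence of data $(\cL; \lambda_1, \ldots, \lambda_k; a_1, \ldots, a_k)$ satisfying the desired formula. \Cref{l:ratexpr} is tailored to produce precisely the rational exponent series required for each operation, so once the induction is set up, the argument is largely bookkeeping.

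For the base case, when $S \in R\langle X\rangle$ is a polynomial, I would take $\cL = \supp(S)$ (finite, hence regular), let $\lambda_1$, $\ldots\,$, $\lambda_k$ enumerate the distinct nonzero coefficients of $S$, and define $a_j = \sum_{w : S(w) = \lambda_j} \charser{\{w\}}$. Each $a_j$ is a polynomial in $\bZ\langle X\rangle$, supported in $\cL$, $\{0,1\}$-valued, and trivially linearly bounded; the formula $S(w) = \prod_j \lambda_j^{a_j(w)}$ on $\cL$ holds by construction.

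For the inductive step, the three cases mirror the three unambiguous operations and are handled by the corresponding parts of \cref{l:ratexpr}. For an unambiguous sum $S = S_1 + S_2$, set $\cL = \cL_1 \dotcup \cL_2$ and concatenate the $\lambda$- and $a$-lists coming from the inductive data of $S_1$ and $S_2$; since $\supp(a_{i,j}) \subseteq \cL_i$ and the $\cL_i$ are disjoint, on each piece only the relevant exponents contribute and the foreign ones evaluate trivially to $\lambda^0 = 1$. For an unambiguous product $S = S_1 S_2$, set $\cL = \cL_1 \cL_2$ and use the product case of \cref{l:ratexpr} (with the second series set to $0$) to extend each exponent series $a_{1,j}$ of $S_1$ to a rational series on $\cL$ returning $a_{1,j}(u)$ on the unique factorization $w = uv$, and symmetrically for the $a_{2,j}$. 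For an unambiguous star $S = T^*$ with $\cL_0 = \supp(T)$ a code, set $\cL = \cL_0^*$ and apply the star case of \cref{l:ratexpr} to each exponent series $a_j$ of $T$ to obtain a rational series $C_j$, supported in $\cL$, with $C_j(w) = a_j(w_1) + \cdots + a_j(w_l)$ along the unique factorization $w = w_1 \cdots w_l \in \cL_0^*$.

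Linear boundedness propagates through each step: it is unaffected by the sum construction, preserved by the product construction (the value is unchanged while the word gets longer), and in the star case $\abs{C_j(w)} \le \sum_i \abs{a_j(w_i)} \le C \sum_i \length{w_i} = C\length{w}$, using that $1 \notin \cL_0$ (required for $T^*$ to be defined) to guarantee every factor $w_i$ is nonempty. The main obstacle is conceptual rather than technical: one needs a rational series that sums the exponents across a variable-length factorization into code words. This is exactly the content of the star case of \cref{l:ratexpr}, without which no straightforward induction would go through; with it in hand, the remainder is routine.
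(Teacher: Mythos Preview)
Your proposal is correct and follows essentially the same approach as the paper: induct on the unambiguous rational construction of $S$, invoking the three parts of \cref{l:ratexpr} to build the exponent series for the sum, product, and star cases, and check that linear boundedness propagates. The only cosmetic difference is that, for the sum and product, you concatenate the two lists of $\lambda$'s and $a$'s, whereas the paper first arranges for both factors to share a common list of $\lambda_i$'s (padding with zero exponent series as needed) and then combines via $a_i+b_i$; both variants are valid and amount to the same thing.
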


\begin{proof}
  The claim is trivially true if $S$ is a polynomial.
  We show that the property is preserved under unambiguous $+$, $\cdot$, and ${}^*$ constructions.

  Let $S$, $T$ be rational series such that there exist $\lambda_1$, $\ldots\,$, $\lambda_k \in R \setminus \{0\}$, linearly bounded rational series $a_1$, $\ldots\,$,~$a_k$,~$b_1$, $\ldots\,$,~$b_k \in \bZ\llangle X \rrangle$, and rational languages $\cL$, $\cK$ such that $\supp(a_i) \subseteq \cL$, $\supp(b_i) \subseteq \cK$, and
  \[
    S(w) =
    \begin{cases}
      \lambda_1^{a_1(w)} \cdots \lambda_k^{a_k(w)} & \text{if $w \in \cL$},\\
      0 &\text{if $w \not\in\cL$;}
    \end{cases}
  \]
  \[
    T(w) =
    \begin{cases}
      \lambda_1^{b_1(w)} \cdots \lambda_k^{b_k(w)} & \text{if $w \in \cK$},\\
      0 &\text{if $w \not\in\cK$.}
    \end{cases}
  \]
  (We can assume that the $\lambda_i$'s are the same, as we can always extend the set of constants, and set $a_i = 0$, respectively, $b_i = 0$, if $\lambda_i$ does not appear in the expression for $S$, respectively, $T$.)

  We first consider $S + T$ with $\cL \cap \cK = \emptyset$.
  Then $\cL \cup \cK$ is a rational language, $S+T(w)=0$ if $w \not \in \cL \cup \cK$, and 
  \[
    (S+T)(w) = S(w) + T(w) = 
    \begin{cases}
      S(w) = \lambda_1^{a_1(w)} \cdots \lambda_k^{a_k(w)} &\text{if $w \in \cL$},\\
      T(w) = \lambda_1^{b_1(w)} \cdots \lambda_k^{b_k(w)} &\text{if $w \in \cK$}.
    \end{cases}
  \]
  Since $\cL \cap \cK = \emptyset$, we get $(S+T)(w) = \lambda_1^{a_1(w)+b_1(w)} \cdots \lambda_k^{a_k(w)+b_k(w)}$ for all $w \in \cL \cup \cK$.
  Clearly $a_i + b_i$ is a linearly bounded rational series, and $\supp(a_i+b_i) \subseteq \cL \cup \cK$.

  Now consider $ST$ with $\cL \cK$ unambiguous.
  Then $\cL \cK$ is a rational language, and for $w=uv$ with $u \in \cL$, $v \in \cK$,
  \[
    (ST)(w) = S(u)T(v) = \lambda_1^{a_1(u)+b_1(v)} \cdots \lambda_k^{a_k(u) + b_k(v)}.
  \]
  Define series $c_i$ by $c_i(uv) = a_i(u) + b_i(v)$ if $w=uv \in \cL\cK$ with $u \in \cL$, $v \in \cK$, and $c_i(w) =0$ for $w \not \in \cL\cK$.
  Clearly $c_i$ is linearly bounded.
  Since $\cL \cK$ is unambiguous, \cref{l:ratexpr} implies that $c_i$ is rational.

  Now suppose that $\cL = \supp(S)$ is a code and consider $S^*$.
  Then $\cL^*$ is a rational language.
  For $w \in \cL^*$ there exist uniquely determined $w_1$, $\ldots\,$,~$w_l \in \cL$ with $w=w_1\cdots w_l$.
  We have
  \[
    S^*(w) = S^*(w_1\cdots w_l) = S(w_1)\cdots S(w_l) = \lambda_1^{a_1(w_1) + \dots + a_1(w_l)} \cdots \lambda_k^{a_k(w_1) + \dots + a_k(w_l)}.
  \]
  Define
  \[
    c_i(w_1 \cdots w_l) = a_i(w_1) + \cdots + a_i(w_l).
  \]
  and $c_i(w) = 0$ if $w \not \in \cL^*$.
  Then $c_i$ is linearly bounded and, by \cref{l:ratexpr}, again rational.
\end{proof}

\section{Hadamard sub-invertibility}
\label{sec:hadamard}

It is known that every unambiguous rational series is Hadamard sub-invertible, and every Hadamard sub-invertible rational series is a Pólya series \cite[Exercise 3.1 of Chapter 6]{berstel-reutenauer11}.
This is particularly easy for $K=\bQ$.
For arbitrary fields, the same argument works but requires a theorem of Roquette; hence we give the proof in full.

\begin{lemma} \label{l:unambig-hadamard}
  Every unambiguous rational series is Hadamard sub-invertible.
\end{lemma}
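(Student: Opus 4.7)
The plan is a straightforward structural induction on the unambiguous rational expression defining $S$. The crucial point is that, by the very definition of unambiguity, each coefficient of the result of an unambiguous operation is a \emph{single product} of coefficients of its inputs (never a sum), so each nonzero coefficient can be inverted term-by-term.

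Writing $S^{\odot -1} \coloneqq \sum_{w \in \supp(S)} S(w)^{-1} w \in K\llangle X\rrangle$ for the Hadamard sub-inverse, I would prove by induction on the unambiguous rational expression that $S^{\odot -1}$ is rational, and in fact that the following identities hold:
\begin{enumerate}
\item If $S \in R\langle X\rangle$ is a polynomial, then $S^{\odot -1}$ is a polynomial (hence rational).
\item If $S+T$ is an unambiguous sum (i.e.\ $\supp(S) \cap \supp(T) = \emptyset$), then
  $(S+T)^{\odot -1} = S^{\odot -1} + T^{\odot -1}$.
\item If $ST$ is an unambiguous product (every $w \in \supp(S)\supp(T)$ factors uniquely as $w = uv$ with $u \in \supp(S)$, $v \in \supp(T)$), then
  $(ST)^{\odot -1} = S^{\odot -1}\, T^{\odot -1}$,
  because in this case $(ST)(w) = S(u)T(v)$ is a single product.
\item If $S^*$ is an unambiguous star (so $\supp(S)$ is a code and $S$ has zero constant term), then
  $(S^*)^{\odot -1} = (S^{\odot -1})^*$,
  because for $w = w_1 \cdots w_l$ with the $w_i \in \supp(S)$ uniquely determined, $S^*(w) = S(w_1)\cdots S(w_l)$ is a single product.
\end{enumerate}

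In each of (2)--(4) one checks that the right-hand side is well-defined as an element of $K\llangle X \rrangle$ (for (4), note $\supp(S^{\odot -1}) = \supp(S)$ still has zero constant term, so $(S^{\odot -1})^*$ is defined), and that the corresponding operation on the right-hand side is again \emph{unambiguous} of the same type. The verification of each identity reduces to reading off the unique factorization underlying the unambiguous operation and inverting the resulting single product. By induction, $S^{\odot -1}$ is then built from polynomials via the rational operations $+$, $\cdot$, $^*$ in $K\llangle X \rrangle$, and is therefore rational over $K$.

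There is no real obstacle; the lemma is essentially a bookkeeping exercise. The only subtlety to watch is in case (4), where one must confirm that the star operation can be applied to $S^{\odot -1}$ (which holds because having zero constant term is a property of the support, not the values) and that the resulting expansion matches the coefficient-by-coefficient inverse. Once the four cases are verified, the lemma follows immediately from \cref{def:unambiguous-rat}.
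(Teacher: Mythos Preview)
Your proposal is correct and follows exactly the same structural induction as the paper's proof, which merely states that polynomials are Hadamard sub-invertible and that unambiguous sums, products, and star operations preserve this property. You have simply spelled out the explicit identities $(S+T)^{\odot -1} = S^{\odot -1} + T^{\odot -1}$, $(ST)^{\odot -1} = S^{\odot -1}\,T^{\odot -1}$, and $(S^*)^{\odot -1} = (S^{\odot -1})^*$ that the paper leaves implicit.
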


\begin{proof}
  Every noncommutative polynomial is Hadamard sub-invertible.
  If $S$,~$T$ are Hadamard sub-invertible, then it is easy to see that the \emph{unambiguous} sums, products, and star operations preserve this property.
\end{proof}

\begin{lemma} \label{l:hadamard-polya}
  Every Hadamard sub-invertible series is a Pólya series.
\end{lemma}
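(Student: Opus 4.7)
The plan is to show that the nonzero coefficients of $S$ lie in the unit group of a finitely generated subdomain of $K$, and then invoke a theorem of Roquette to deduce that this unit group is a finitely generated abelian group.

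First, I would fix a linear representation $(u,\mu,v)$ of $S$, so that $S(w) = u\mu(w)v$ for every $w \in X^*$. Each such coefficient is a polynomial expression in the finitely many entries of $u$, $v$, and the matrices $\mu(x)$ for $x \in X$. Consequently, all coefficients of $S$ lie in the subring $R_0 \subseteq K$ generated over the prime subring by these entries, which is finitely generated. Since $S$ is Hadamard sub-invertible, the series
\[
  S^{-1}_H \;=\; \sum_{w \in \supp(S)} S(w)^{-1} w
\]
is also rational, so applying the same reasoning to any linear representation of $S^{-1}_H$ produces a finitely generated subring $R_1 \subseteq K$ containing every $S(w)^{-1}$ with $w \in \supp(S)$.

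Now let $R_2$ be the subring of $K$ generated by $R_0 \cup R_1$; this is still a finitely generated integral domain. By construction, both $S(w)$ and $S(w)^{-1}$ lie in $R_2$ whenever $w \in \supp(S)$, which means that every nonzero coefficient of $S$ is a unit of $R_2$. By Roquette's theorem on the finite generation of the unit group of a finitely generated integral domain (which generalizes Dirichlet's unit theorem to both mixed and positive characteristic), the group $R_2^\times$ is finitely generated. Taking $G = R_2^\times \le K^\times$ then yields $S(w) \in G \cup \{0\} = G_0$ for all $w \in X^*$, so $S$ is a Pólya series.

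The only genuine input beyond bookkeeping with linear representations is Roquette's theorem, so the main point of the write-up will be to cite it cleanly; in the special case $K=\bQ$ this step reduces to observing that the units of $\bZ[1/N]$ for any integer $N \ge 1$ form a finitely generated group, which is why the proof is straightforward over $\bQ$ but requires a nontrivial number-theoretic input in general.
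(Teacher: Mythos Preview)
Your proof is correct and follows essentially the same route as the paper: both arguments produce a finitely generated subalgebra of $K$ (over the prime subring) containing all coefficients of $S$ and all their inverses, so that the nonzero coefficients lie in its unit group, and then invoke Roquette's theorem to conclude that this unit group is finitely generated. The only cosmetic difference is that you justify the finite generation via an explicit linear representation, whereas the paper simply notes it is immediate from the definition of a rational series.
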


\begin{proof}
  Let $S \in R\llangle X \rrangle \subseteq K\llangle X \rrangle$ be a Hadamard sub-invertible rational series.
  If $\chr K > 0$, let $k$ be the (finite) prime field of $K$; if $\chr K = 0$, let $k = \bZ$.
  It is immediate from the definition of a rational series that there exists a finitely generated $k$-subalgebra $A$ of $K$ containing all coefficients of $S$.
  Since $\sum_{w \in \supp(S)} S(w)^{-1} w$ is also rational, we may moreover assume that $A$ also contains all $S(w)^{-1}$ with $S(w) \ne 0$.
  Hence, the nonzero coefficients of $S$ are contained in $A^\times$.
  The group $A^\times$ is finitely generated by a theorem of Roquette \cite[Corollary 7.5]{lang83} in characteristic $0$, and a slightly easier argument in positive characteristic \cite[Corollary 7.3]{lang83}.
\end{proof}

\section{Putting it all together}
\label{sec:all-together}

The proofs of \cref{t:univariate,t:main} in the case where $R=K$ is a field are now a formality.
To obtain the more general result for domains, we first need to extend \cref{p:polya-unambig} to completely integrally closed domains.

\begin{proposition} \label{p:polya-unambig-general}
  Let $R$ be a completely integrally closed domain.
  Every rational Pólya series in $R\llangle X \rrangle$ is unambiguous rational \textup{(}over $R$\textup{)}.
\end{proposition}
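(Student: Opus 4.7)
The plan is to adapt the proof of \cref{p:polya-unambig} so as to produce an unambiguous weighted automaton whose weights lie in $R$ (and not merely in $K$); an application of \cref{p:unambig} to this automaton will then give an unambiguous rational expression for $S$ over $R$, as required.

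First, set $\Gamma = \{\,S(w) : w \in X^*\,\} \subseteq R \cap G_0$, where $G \le K^\times$ is the finitely generated subgroup provided by the Pólya hypothesis. \cref{l:good-basis}, applied with this $\Gamma$, yields a minimal linear representation $(u,\mu,v)$ of $S$ with $u\mu(w) \in R^{1 \times n}$ for all $w \in X^*$ and $v = e_1 \in R^{n \times 1}$. Applying \cref{l:repr-directsum} with the same $\Gamma$ then produces a representation $(u', \mu', v')$ whose linear hull decomposes as a direct sum of irreducible components spanned by blocks of standard basis vectors, with the further property that $\Omega' = u'\mu'(X^*) \subseteq R^{1 \times m}$ and $\Omega' v' \subseteq R$. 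Arguing verbatim as in the proof of \cref{p:polya-unambig}, \cref{l:functional} in characteristic zero (respectively \cref{l:functional-p} together with \cref{l:point-count} in positive characteristic) verifies the hypotheses of \cref{l:unambiguous}. The associated weighted automaton $\cA$ is therefore unambiguous, its initial weights (the coordinates of $u'$) already lie in $R$, and every accepting-path weight equals some $S(w) \in R$ by unambiguity.

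The main obstacle is to push the remaining weights of $\cA$---the edge weights coming from entries of $\mu'(x)$ and the terminal weights coming from entries of $v'$, which a priori lie only in $K$---into $R$ after a suitable gauge transformation $b \colon Q \to K^\times$. Here the hypothesis that $R$ is completely integrally closed becomes essential through the following cycle observation. Trim $\cA$, discarding any state not lying on an accepting path of nonzero total weight (such states contribute nothing to $S$). For each remaining state $q$, fix an accepting path through $q$; its total weight equals some $s_q \in R \setminus \{0\}$ by unambiguity. Then for any closed walk at $q$ of weight $\gamma$, inserting $n$ copies of the walk into the fixed accepting path yields accepting paths of weights $s_q \gamma^n \in R$ for all $n \ge 0$; hence $\gamma$ is almost integral over $R$, so $\gamma \in R$. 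Using this observation together with a gauge transformation constructed from chosen paths from an initial state to each $q$---and exploiting the direct-sum structure from \cref{l:repr-directsum} to realize each candidate rescaled edge and terminal weight as (or as a ratio recoverable from) a closed-walk weight---forces all the remaining weights of $\cA$ to lie in $R$ after rescaling.

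Finally, \cref{p:unambig} applied to the resulting unambiguous weighted automaton with weights in $R$ produces an unambiguous rational expression for $S$ over $R$, completing the proof.
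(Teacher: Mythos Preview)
Your closed--walk observation is correct and is morally the same idea the paper uses (pumping inside a star), but the gauge step does not go through as written. Consider $R=\bZ$, $K=\bQ$, and $S=6\,xz+10\,yz$. Applying \cref{l:good-basis} with $w_1=\varepsilon$, $w_2=z$, $w_3=xz$ gives $u=(0,0,6)$, $v=e_1$, and one computes $\mu(y)_{3,2}=5/3$; the linear hull is already the union of the three coordinate axes, so \cref{l:repr-directsum} changes nothing and the associated unambiguous automaton has an edge of weight $5/3\notin\bZ$. The only initial--to--state--$2$ path weights are $6$ (via $x$) and $10$ (via $y$), and with either choice of $b(2)\in\{6,10\}$ the rescaled pair of $3\!\to\!2$ edge weights is $\{1,5/3\}$ or $\{3/5,1\}$. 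Thus your proposed gauge $b(q)=\lambda_q$ fails. Moreover this automaton is acyclic, so there are no closed walks at all; the phrase ``realize each candidate rescaled edge weight as a closed--walk weight'' has no content here, and the direct--sum structure (already maximally fine) contributes nothing further. A working gauge does exist in this example ($b=(1,1,3)$), but your argument does not produce it, and it is not clear that one can always be found for the specific automaton coming out of \cref{p:polya-unambig}.

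The paper avoids this difficulty entirely by working on the side of unambiguous rational \emph{expressions} rather than automata. It first invokes \cref{p:polya-unambig} and \cref{p:unambig} to write $S$ as an unambiguous rational series over $K$, then puts such expressions into a normal form: unambiguous sums of terms $\lambda\,w_0 S_1^* w_1\cdots S_l^* w_l$, with the $S_i$ recursively of the same shape. The point is that any nonzero coefficient $a_i$ of any $S_i$ sits \emph{under a star}, so $\lambda a_1\cdots a_{i-1}a_i^{\,m}a_{i+1}\cdots a_l$ is a coefficient of $S$ for every $m\ge 0$; complete integral closedness then forces $a_i\in R$, and an induction on the nesting depth finishes. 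This is exactly your closed--walk idea, but applied where every constituent is automatically ``pumpable'', so no residual gauge problem arises.
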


\begin{proof}
  Let $K$ be the quotient field of $R$.
  If $S \in R\llangle X \rrangle$ is a rational Pólya series, \cref{p:polya-unambig} together with \cref{p:unambig} shows that $S$ is unambiguous rational as a series over $K$.
  What remains to be shown is that we can obtain an unambiguous rational decomposition of $S$ in such a way that all the component series have their coefficients in $R$.

  For a domain $A$, let $U_0(A) = A\langle X \rangle$.
  For $k \ge 1$, inductively define $U_k(A)$ as the set of all series obtained as unambiguous sums (that is, having pairwise disjoint support) of series of the form
  \begin{equation}
    \label{eq:expunambig}
    S = \lambda w_0 S_1^* w_1 S_2^* w_2 \cdots S_l^* w_l,
  \end{equation}
  with $l \ge 0$, with $0 \ne \lambda \in A$, with $w_0$, $\ldots\,$,~$w_l \in X^*$, with $S_1$, $\ldots\,$,~$S_l \in \bigcup_{k'=0}^{k-1} U_{k'}(A) \setminus \{0\}$ and satisfying the following conditions:
  \begin{enumerate}
  \item The operations $S_i^*$ are unambiguous, that is $\supp(S_i)$ is a code, for all $i \in [1,l]$, and
  \item the products in \eqref{eq:expunambig} are unambiguous, that is, for every word $w \in \supp(S)$ and every $i \in [1,l]$, there exists a unique word $w_i' \in \supp(S_i^*)$ such that $w=w_0w_1'w_1 \cdots w_l'w_l$.
  \end{enumerate}
  Let $\cU(A)$ denote the set of all unambiguous sums of elements of $\bigcup_{k\ge 0} U_k(A)$.
  By construction, each series in $\cU(A)$ is unambiguous rational.
  Note that $A\langle X \rangle \subseteq \cU(A)$ and that $\cU(A)$ is closed under unambiguous sums and the unambiguous star operation.
  Moreover, by distributivity we see that $\cU(A)$ is closed under unambiguous products.
  Thus $\cU(A)$ is the set of all unambiguous rational series over $A$.

  To conclude the proof of the proposition, we show $U_k(K) \cap R\llangle X \rrangle = U_k(R)$ by induction on $k$.
  For $k=0$ the claim is trivial because $K\langle X \rangle \cap R \llangle X \rrangle = R\langle X \rangle$.
  Suppose now $k \ge 1$ and the claim has been established for $k' < k$.
  Let $S \in U_k(K) \cap R \llangle X \rrangle$.
  Decomposing along unambiguous sums, it suffices to consider $S$ as in \eqref{eq:expunambig}.
  Let $i \in [1,l]$ and let $a_i$ be a nonzero coefficient of $S_i$.
  For each $j \in [1,l] \setminus \{i\}$ pick an arbitrary nonzero coefficient $a_j$ of $S_j$.
  Then $\lambda a_1 \cdots a_{i-1} a_i^m a_{i+1} \cdots a_l$ is a coefficient of $S$ for every $m \ge 0$, and hence contained in $R$.
  Since $R$ is completely integrally closed, we conclude $a_i \in R$.
  Thus $S_i \in U_{k'}(K) \cap R\llangle X \rrangle = U_{k'}(R)$ for some $k' < k$.
  Since $\lambda$ appears as coefficient of $w_0w_1\cdots w_l$ (taking the empty word in each $S_j^*$), we also must have $\lambda \in R$.
  Thus $S \in U_{k}(R)$.
\end{proof}

\begin{proof}[Proof of \cref{t:main}]
  The equivalence \ref{main:unam-auto}$\,\Leftrightarrow\,$\ref{main:unam-rat} is shown in \cref{sec:unam-auto-unam-rat}.
  For $R=K$ a field, the implication \ref{main:polya}$\,\Rightarrow\,$\ref{main:unam-auto} follows by \cref{p:polya-unambig}.
  More generally, for $R$ a completely integrally closed domain, the implication \ref{main:polya}$\,\Rightarrow\,$\ref{main:unam-rat} is shown in \cref{p:polya-unambig-general}.
  Next, the implication \ref{main:unam-rat}$\,\Rightarrow\,$\ref{main:formula} follows from \cref{p:unambig-formula}.
  The implication \ref{main:formula}$\,\Rightarrow\,$\ref{main:polya} is trivial.

  Finally, the implication \ref{main:unam-rat}$\,\Rightarrow\,$\ref{main:hadamard} follows from \cref{l:unambig-hadamard}, and \ref{main:hadamard}$\,\Rightarrow\,$\ref{main:polya} holds by \cref{l:hadamard-polya}.
\end{proof}

\begin{remark} \label{rem:cic}
  Every completely integrally closed domain is integrally closed, and a noetherian domain is completely integrally closed if and only if it is integrally closed.
  Krull domains, and thus in particular factorial domains such as $\bZ$, are completely integrally closed.
  The ring of all algebraic integers is a non-noetherian, completely integrally closed domain.
\end{remark}

To illustrate that some condition needs to be imposed on the domain $R$, the following example gives a Pólya series over an integrally closed, but not completely integrally closed, domain $R$ that is \emph{not} unambiguous rational over $R$.

\begin{example}
  Let $R = \bZ[y^iz : i \ge 1] \subseteq \bZ[y,z]$ and $S = \sum_{i \ge 1} y^izx^i \in R\llbracket x \rrbracket$.
  Then $S$ is a Pólya series, and indeed, over $\bZ[y,z]$ is unambiguous rational as $S=zyx (yx)^*$.

  However, suppose $S$ were unambiguous rational over $R$.
  Then \cref{p:unambig-ap} applies.
  In particular, there exist $d > 0$, $r \ge 0$, and $f$,~$g \in R$ such that $y^{dk+r}z=fg^k$ for every $k \ge 0$.
  However, in $R$ the element $y^iz$ is an irreducible element for each $i \ge 1$, so we must have $g = \pm 1$ and $y^{dk+r}z = \pm f$ for all $k \ge 0$, a contradiction.

  The ring $R$ is not completely integrally closed, because $(yz) y^i \in R$ for all $i \ge 0$, but $y \not \in R$.
  However it is integrally closed:
  Note that $R \subseteq \bZ[yz,y]$ and the latter ring is factorial, hence integrally closed, so that the integral closure of $R$ must be contained in $\bZ[yz,y]$.
  Let $a \in K$ be integral over $R$.
  Then $a \in \bZ[yz,y]$ and hence $a=a' + a''$ with $a' \in \bZ[y]$ and $a'' \in R$.
  Then $a'$ is integral over $R$.
  Hence there exist $m \ge 1$ and $b_0$, $\ldots\,$,~$b_{m-1} \in R$ such that
  \[
    (a')^m + b_{m-1} (a')^{m-1} + \cdots + b_0 = 0.
  \]
  Taking this equation modulo $z$, we see that $a' \in \bZ[y]$ is integral over $\bZ$, forcing  $a' \in \bZ$.
  Hence $a \in R$.
\end{example}

The previous example is no coincidence; more generally the following holds.

\begin{lemma}
  Suppose $R$ is integrally closed but not completely integrally closed.
  Then there exists a rational Pólya series $S \in R\llangle X \rrangle$ such that $S$ is \emph{not} unambiguous rational.
\end{lemma}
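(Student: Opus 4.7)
The plan is to exhibit a single example generalizing the one preceding the lemma. Since $R$ is integrally closed but not completely integrally closed, there exist $a \in K \setminus R$ and $c \in R \setminus \{0\}$ with $c a^n \in R$ for all $n \ge 0$. Taking $X = \{x\}$ to be a singleton, I would consider
\[
  S = \sum_{n \ge 0} (c a^n)\, x^n \in R\llbracket x \rrbracket,
\]
the power series expansion of the rational function $c/(1-ax)$. By the choice of $(c,a)$ every coefficient lies in $R$, and the nonzero coefficients are contained in the finitely generated multiplicative subgroup $\langle a, c \rangle \le K^\times$, so $S$ qualifies as a rational Pólya series in exactly the sense exhibited by the series $\sum_{i \ge 1} y^i z x^i$ of the preceding example.

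It then remains to show that $S$ is not unambiguous rational over $R$. I would assume to the contrary that it is. Since $X$ is a singleton, \cref{p:unambig-ap} furnishes integers $d \ge 1$ and $r \in [0, d-1]$ as well as elements $f \in R$ and $g \in R \setminus \{0\}$ with $c a^{dk + r} = f g^k$ for all sufficiently large $k$. Comparing this identity at two consecutive values of $k$ yields $a^d = g \in R$; hence $a$ satisfies the monic polynomial $X^d - g \in R[X]$ and is integral over $R$. Because $R$ is integrally closed, $a \in R$, contradicting the choice of $a$.

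The main step is the non-unambiguity argument: one invokes \cref{p:unambig-ap} and extracts from its arithmetic-progression conclusion the algebraic fact $a^d \in R$, after which the integral closure of $R$ immediately gives $a \in R$. The construction of $S$ itself is the same device used in the preceding example, where the almost-integral witness $(c, a)$ is exactly the data produced by the failure of complete integral closure; the point of the lemma is that any ring $R$ with this failure can be fed into the same template.
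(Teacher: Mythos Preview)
Your proof is correct and follows essentially the same approach as the paper's own proof: both construct $S=\sum_{n\ge 0} ca^n x^n$ from an almost-integral witness $(c,a)$, invoke \cref{p:unambig-ap} to obtain $ca^{dk+r}=fg^k$ on an arithmetic progression, and deduce $a^d\in R$, contradicting integral closedness since $a\notin R$. Your version is in fact slightly more careful than the paper's in noting that the identity from \cref{p:unambig-ap} only holds for sufficiently large $k$ (because of the finite exceptional set $F$), and your derivation of $a^d=g$ by taking the ratio at consecutive values of $k$ is marginally more direct than the paper's observation that $(a^d g^{-1})^k$ is constant.
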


\begin{proof}
  Since $R$ is not integrally closed, there exists $0 \ne a \in R$ and $b \in K \setminus R$ such that $ab^i \in R$ for all $i \ge 0$.
  Let $S = \sum_{i \ge 0} ab^ix^i$.
  We show that $S$ is not unambiguous rational.
  Suppose to the contrary that it is.
  Then it is recognized by an unambiguous weighted automaton with weights in $R$, and \cref{p:unambig-ap} shows that there exist $m > 0$, $n \ge 0$ and $c$,~$d \in R \setminus \{0\}$ such that $ab^{mk+n} = cd^k$ for all $k \ge 0$.
  Then $(b^md^{-1})^k=ca^{-1}b^{-n}$ is constant for all $k \ge 0$.
  Substituting $k=0$ and $k=1$, we see $b^md^{-1}=1$.
  Thus $b$ is a root of the polynomial $t^m - d \in R[t]$, hence integral over $R$.
  By hypothesis $b \in R$, a contradiction.
\end{proof}

Finally, we deduce the classical theorem for the univariate theorem as a special case of our result.

\begin{proof}[Proof of \cref{t:univariate}]
  The series $S$ is rational by \cite[Proposition 6.1.1]{berstel-reutenauer11}.
  By \cref{t:main}, there exists an unambiguous weighted automaton $\cA$ on the alphabet $X=\{x\}$ recognizing $S$.
  The claim now follows from \cref{p:unambig-ap}.
\end{proof}

\section{Determinizability}
\label{sec:determinizable}

We finish by showing  \ref{det:det}$\,\Rightarrow\,$\ref{det:dim} of  \cref{t:determinizable}.
The approach in this section is inspired by \cite[Theorem 9]{mohri97}.
Mohri shows that a deterministic weighted automaton over the tropical semiring $(\bR,\max,+)$ has bounded variation; and that an unambiguous weighted automaton with bounded variation is determinizable.

\begin{definition}
  Let $(G,\cdot)$ be a group.
  A map $\ell \colon G \to \bR_{\ge 0}$ is a \emph{length function} if
  \begin{enumerate}
  \item $\ell(1_G) = 0$.
  \item $\ell(gh) \le \ell(g) + \ell(h)$ for all $g$,~$h \in G$.
  \item $\ell(g) = \ell(g^{-1})$ for all $g \in G$.
  \end{enumerate}
\end{definition}

If $K$ has an absolute value $|\cdot|$, and $G \le K^\times$, then $\ell(g) = \abs{\log(\abs{g})}$ defines a length function.
On $(\bZ^r,+)$ we have a length function $(a_1,\ldots,a_r) \mapsto \abs{a_1} + \cdots + \abs{a_r}$.
This induces a length function $\ell$ on any finitely generated free abelian group, since $G/G_{\text{tor}} \cong \bZ^r$.
This length function satisfies
\begin{equation} \label{e:length-finite}
  \tag{\textasteriskcentered}
  \card{\{\, g \in G : \ell(g) \le C \,\}} < \infty \quad \text{for all $C \ge 0$}.
\end{equation}

There is a metric $\sd\colon X^* \times X^* \to \bZ_{\ge 0}$, given by
\[
  \sd(u,v) = \card{u} + \card{v} - 2 \card{\operatorname{lgcd}(u,v)},
\]
where $\operatorname{lgcd}(u,v)$ is the longest common prefix of $u$ and $v$.

\begin{definition}
  Let $G$ be a group and $\ell \colon G \to \bR_{\ge 0}$ a length function.
  A function $f\colon X^* \to G_0$ has \emph{bounded $\ell$-variation} if, for every $c \ge 0$, there exists $C \ge 0$ such that for all $u$, $v$ with $f(u) \ne 0$ and $f(v) \ne 0$,
  \[
    \sd(u,v) \le c \quad\text{implies}\quad \ell(f(u)f(v)^{-1}) \le C.
  \]
\end{definition}

\begin{lemma} \label{l:det-variation}
  Let $\cA$ be a deterministic weighted automaton, and let $S$ be the series recognized by $\cA$.
  Let $G \le K^\times$ be such that
  \begin{itemize}
  \item $S(w) \in G_0$ for $w \in X^*$.
  \item all edge and terminal weights of $\cA$ are contained in $G_0$.
  \end{itemize}
  If $\ell \colon G \to \bR_{\ge 0}$ is a length function, then $S$ has bounded $\ell$-variation.
\end{lemma}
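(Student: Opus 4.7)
The plan is to exploit determinism to get a cancellation: for two input words sharing a long common prefix, the computations agree on that prefix, so the coefficients $S(u)$ and $S(v)$ differ only by what happens on the short divergent suffixes and on the terminal weights.

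More precisely, I would begin by fixing words $u$, $v \in X^*$ with $S(u) \ne 0$, $S(v) \ne 0$, and $\sd(u,v) \le c$. Let $w$ be the longest common prefix of $u$ and $v$, so $u = ws$, $v = wt$ with $\length{s} + \length{t} \le c$. Since $\cA$ is deterministic and $S(u) \neq 0$, the unique initial state $p_0$ exists, and reading $w$ determines a unique path from $p_0$ to some state $q$; the same state $q$ is reached when reading the common prefix of $v$. Thus I can factor
\[
  S(u) = \alpha \cdot \beta_s \cdot T(p), \qquad S(v) = \alpha \cdot \beta_t \cdot T(p'),
\]
where $\alpha \in G$ is the accumulated weight (including $I(p_0)$) along the path from $p_0$ to $q$, $\beta_s, \beta_t \in G$ are the products of edge weights along the unique paths out of $q$ labeled by $s$ and $t$, and $p$, $p'$ are the terminal states reached. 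Each factor lies in $G$ because all individual edge and terminal weights lie in $G_0$ and the overall product is nonzero.

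The common factor $\alpha$ now cancels in $S(u) S(v)^{-1} = \beta_s \beta_t^{-1} \cdot T(p) T(p')^{-1}$. Applying the length function axioms (subadditivity and $\ell(g^{-1}) = \ell(g)$) yields
\[
  \ell\!\left(S(u) S(v)^{-1}\right) \le \ell(\beta_s) + \ell(\beta_t) + \ell(T(p)) + \ell(T(p')).
\]
Since $\cA$ has finitely many edges and states, the quantities
\[
  M \coloneqq \max\{\, \ell(E(p,x,q)) : E(p,x,q) \ne 0 \,\}, \quad M' \coloneqq \max\{\, \ell(T(p)) : T(p) \ne 0 \,\}
\]
are finite. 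Subadditivity then gives $\ell(\beta_s) \le \length{s} M \le cM$ and similarly $\ell(\beta_t) \le cM$, so
\[
  \ell\!\left(S(u) S(v)^{-1}\right) \le 2cM + 2M' \eqqcolon C,
\]
which depends only on $c$. This establishes bounded $\ell$-variation.

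There is no substantial obstacle here; the only subtlety is ensuring that $\alpha$, $\beta_s$, $\beta_t$, $T(p)$, $T(p')$ are all nonzero so that they actually lie in $G$ rather than $G_0$, which follows immediately from $S(u), S(v) \ne 0$ together with the fact that the factorizations are given by the \emph{unique} accepting paths determined by $u$ and $v$. The use of determinism (rather than mere unambiguity) is essential precisely at the step where we identify the state $q$ reached after the common prefix $w$ as the same for both $u$ and $v$; without determinism the prefix could already route the two accepting paths through different intermediate states and the cancellation of $\alpha$ would fail.
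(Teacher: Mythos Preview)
Your proof is correct and follows essentially the same route as the paper's: factor along the longest common prefix, use determinism so that the prefix contribution cancels, and bound the remaining suffix and terminal contributions by a constant times $\sd(u,v)$ plus a fixed additive term. The paper packages the edge and terminal bounds into a single constant and writes the estimate as $(\sd(w,w')+2)C$, but the argument is the same.
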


\begin{proof}
  Define
  \[
    C =\max \big\{\, \ell(E(p,a,q)),\, \ell(T(q)) : p,q \in Q, a \in X \text{ with } T(q) \ne 0, E(p,a,q) \ne 0 \,\big\}.
  \]
  Let $w$,~$w' \in X^*$.
  Assume $S(w)\ne 0$ and $S(w') \ne 0$, as otherwise there is nothing to show.

  We may suppose
  \[
    w = u_1 \cdots u_k v_{1} \cdots v_l \quad\text{and}\quad w' = u_1\cdots u_k v_1'\dots v_m',
  \]
  with $u_i$, $v_i$,~$v_i' \in X$ and $\sd(w,w') = l+m$.
  Let $c$ and $c'$ denote the accepting paths labeled by $w$ and $w'$.
  Since $\cA$ is deterministic, we must have
  \begin{align*}
    c &= (p_0,u_1,p_1) \cdots (p_{k-1},u_k,p_k) (p_k,v_1,q_1) \cdots (q_{l-1},v_l,q_l), \quad\text{ and} \\
    c' &= (p_0,u_1,p_1) \cdots (p_{k-1},u_k,p_k) (p_k,v_1',q_1') \cdots (q_{l-1}',v_m,q_m'),
  \end{align*}
  with states $p_i$, $q_i$, $q_i'$.
  For notational convenience, set $p_k=q_0=q_0'$.

  Now
  \[
    \begin{split}
    \ell\bigg(\frac{S(w)}{S(w')}\bigg) &= \ell\bigg(\frac{I(p_0)E(c)T(q_l)}{I(p_0)E(c')T(q_m')}\bigg) \\
    &\le \sum_{i=1}^l \ell(E(q_{i-1},v_i,q_i)) + \sum_{i=1}^m \ell(E(q_{i-1}',v_i',q_i')) + \ell(T(q_l)) + \ell(T(q_m'))\\
    &\le (2+m+l)C = (\sd(w,w') + 2)C.
    \end{split}
  \]
  If $w \ne w'$, then $\sd(w,w') \ge 2$.
  Choosing $C'=C+1$, we have $\ell( S(w)S(w')^{-1} ) \le C'\sd(w,w')$.
\end{proof}

\begin{lemma} \label{l:det-dimbound}
  Let $S$ be a Pólya series.
  Let $\ell \colon G \to \bR_{\ge 0}$ be a length function satisfying \eqref{e:length-finite}.
  Suppose that $S$ has bounded $\ell$-variation.
  If $(u,\mu,v)$ is a minimal linear representation of $S$, its linear hull has dimension at most $1$.
\end{lemma}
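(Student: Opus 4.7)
My plan is to use Lemma~\ref{l:good-basis} to pass to a minimal linear representation in which the coordinates of $u\mu(w)$ are themselves coefficients of $S$, and then translate bounded $\ell$-variation into a finiteness statement about these coordinate vectors up to rescaling. Since any two minimal linear representations of $S$ differ by a change of basis and the dimension of the linear hull is invariant under such changes, this replacement is harmless. The construction in the proof of Lemma~\ref{l:good-basis} provides fixed words $w_1,\ldots,w_n \in X^*$ such that the $i$-th coordinate of $u\mu(w)$ equals $u\mu(w)\mu(w_i)v = S(ww_i)$ for every $w \in X^*$. Set $M = \max_i \length{w_i}$.

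Next I would feed this setup into the bounded variation hypothesis. Since $w$ is a common prefix of both $ww_i$ and $ww_j$, we have $\sd(ww_i,ww_j) \le \length{w_i}+\length{w_j} \le 2M$ uniformly in $w$, so there is a constant $C \ge 0$ such that $\ell(S(ww_i)S(ww_j)^{-1}) \le C$ whenever both coefficients are nonzero. By~\eqref{e:length-finite} the set $F = \{\,g \in G : \ell(g) \le C\,\}$ is finite. For any $w$ with $u\mu(w) \ne 0$, pick an index $j_0 = j_0(w)$ with $S(ww_{j_0}) \ne 0$; the normalized vector $S(ww_{j_0})^{-1}u\mu(w)$ then has every coordinate in $F \cup \{0\}$, so takes only finitely many values as $w$ ranges over $X^*$. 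Therefore $\Omega \subseteq \{0\} \cup \bigcup_{k=1}^N Kv_k$ for finitely many vectors $v_1,\ldots,v_N \in K^{1\times n}$.

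The dimension bound now follows immediately. The closure $\overline{\Omega}$ still lies in this finite union of $1$-dimensional subspaces, and each irreducible component of $\overline{\Omega}$, being a vector subspace of $K^{1\times n}$, is forced to have dimension at most $1$: for infinite $K$ this is because a vector space cannot be covered by finitely many proper subspaces, and for finite $K$ it is built into the description of irreducible closed subsets from Section~\ref{sec:linearhull}. Hence $\dim \overline{\Omega} \le 1$.

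I do not anticipate a substantive obstacle here: the argument is a clean combination of Lemma~\ref{l:good-basis}, the bounded variation assumption, and the Northcott-type property~\eqref{e:length-finite}, with no recourse to the unit-equation machinery needed for the main theorem. The only mildly delicate point is verifying that the dimension of the linear hull is independent of the choice of minimal representation, which follows at once from invariance under the coordinate change between any two such representations.
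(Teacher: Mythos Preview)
Your proposal is correct and follows essentially the same approach as the paper. The only cosmetic difference is that the paper, instead of replacing $(u,\mu,v)$ by the representation from Lemma~\ref{l:good-basis} and invoking invariance of the linear hull under change of minimal representation, simply picks $w_1,\ldots,w_n$ with $b_i=\mu(w_i)v$ a basis, forms the invertible matrix $B$ with columns $b_i$, and argues directly that $\Omega B$ (whose $i$-th coordinates are the values $S(ww_i)$) lies in finitely many lines; this sidesteps the need to justify that minimal representations are all conjugate, though of course that fact underlies both versions.
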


\begin{proof}
  Let $w_1$, $\ldots\,$,~$w_n \in X^*$ be such that the vectors $b_i=\mu(w_i)v$ form a basis of $K^{n \times 1}$.
  Let $B$ be the matrix whose $i$-th column is $b_i$.
  Let $\Omega = u \mu(X^*) \subseteq K^{1\times n}$.
  We have to show that $\Omega$ can be covered by finitely many 1-dimensional vector spaces.
  Since $B$ is invertible, it suffices to show the same for $\Omega B = \{\, u \mu(w)B  : w \in X^* \,\}$.

  Since $S$ has bounded $\ell$-variation, there exists $C \ge 0$ such that for every $w \in X^*$ and $i$, $j \in [1,n]$
  \[
    \abs[\Big]{\ell\big( S(ww_i) S(ww_j)^{-1} \big)} \le C,
  \]
  whenever $S(ww_i) \ne 0$ and $S(ww_j) \ne 0$.
  By our assumptions, this implies that the ratio $S(ww_i)S(ww_j)^{-1}$ can only take finitely many values (when $S(ww_j) \ne 0$).
  Noting that $S(ww_i) = u \mu(w) b_i$ is the $i$-th coordinate of $u\mu(w)B$, we conclude that we can cover $\Omega B$ by finitely many $1$-dimensional vector spaces.
\end{proof}

\begin{proof}[{Proof of \cref{t:determinizable}}]
  \ref{det:det}$\,\Rightarrow\,$\ref{det:dim}
  Let $\cA$ be a deterministic weighted automaton that recognizes $S$.
  There exists a finitely generated $G \le K^\times$ such that $S(w) \in G_0$ for all $w \in X^*$.
  This follows from \cref{t:main}, but can also be easily seen directly:
  Since $\cA$ is deterministic, and in particular unambiguous, each coefficient is a product of some weights of the automaton.
  By enlarging $G$ if necessary, we can further assume that all edge and terminal weights are contained in $G_0$.

  Let $\ell \colon G \to \bR_{\ge 0}$ be a length function on $G$ satisfying \eqref{e:length-finite}.
  By \cref{l:det-variation}, the series $S$ has bounded $\ell$-variation.
  \Cref{l:det-dimbound} implies the claim.

  \ref{det:dim}$\,\Rightarrow\,$\ref{det:det}
  By \cref{p:deterministic}.
\end{proof}

\bibliographystyle{hyperalphaabbr}
\bibliography{rational_series}

\end{document}